\newtheorem{theorem}{Theorem}       
\newtheorem{proposition}[theorem]{Proposition}           
\newtheorem{lemma}[theorem]{Lemma}
\newtheorem{corollary}[theorem]{Corollary}
\newtheorem{observation}[theorem]{Observation}
\newtheorem{problem}{Problem}
\newtheorem*{claim_}{Claim}
\begin{document}

\title{Improved enumeration of simple topological graphs\thanks{The author was supported 
by the GraDR EUROGIGA GACR project No. GIG/11/E023 and
by the grant SVV-2013-267313 (Discrete Models and Algorithms). Part of the research was conducted during the Special Semester on Discrete and Computational Geometry at \'Ecole Polytechnique F\'ederale de Lausanne, organized and supported by the CIB (Centre  Interfacultaire Bernoulli) and the SNSF (Swiss National Science Foundation). Earlier version of the paper appeared in the author's doctoral thesis~\cite{K13_combinatorial}.
}
}

\author{
Jan Kyn\v cl
} 

\date{}

\maketitle

\begin{center}
{\small \footnotesize
Department of Applied Mathematics and Institute for Theoretical Computer Science, \\
Charles University, Faculty of Mathematics and Physics, \\
Malostransk\'e n\'am.~25, 118~ 00 Praha 1, Czech Republic; \\
\texttt{kyncl@kam.mff.cuni.cz}
}
\end{center}

\begin{abstract}

A {\em simple topological graph\/} $T=(V(T), E(T))$ is a drawing of a graph in the plane where every two edges have at most one common point (an endpoint or a crossing) and no three edges pass through a single crossing. Topological graphs $G$ and $H$ are {\em isomorphic\/} if $H$ can be obtained from $G$ by a homeomorphism of the sphere, and {\em weakly isomorphic\/} if $G$ and $H$ have the same set of pairs of crossing edges.

We generalize results of Pach and T\'oth and the author's previous results on counting different drawings of a graph under both notions of isomorphism.
We prove that for every graph $G$ with $n$ vertices, $m$ edges and no isolated vertices the number of weak isomorphism classes of simple topological graphs that realize $G$ is at most $2^{O(n^2\log (m/n))}$, and at most $2^{O(mn^{1/2}\log n)}$ if $m\le n^{3/2}$. As a consequence we obtain a new upper bound $2^{O(n^{3/2}\log n)}$ on the number of intersection graphs of $n$ pseudosegments. We improve the upper bound on the number of weak isomorphism classes of simple complete topological graphs with $n$ vertices to $2^{n^2\cdot \alpha(n)^{O(1)}}$, using an upper bound on the size of a set of permutations with bounded VC-dimension recently proved by Cibulka and the author. We show that the number of isomorphism classes of simple topological graphs that realize $G$ is at most $2^{m^2+O(mn)}$ and at least $2^{\Omega(m^2)}$ for graphs with $m>(6+\varepsilon)n$. 
\end{abstract}


\section{Introduction}\label{section_intro}

A {\em topological graph\/} $T=(V(T), E(T))$ is a drawing of a graph $G$ in the plane with the following properties. The vertices of $G$ are represented by a set $V(T)$ of distinct points in the plane and the edges of $G$ are represented by a set $E(T)$ of simple curves connecting the corresponding pairs of points. We call the elements of $V(T)$ and $E(T)$ the {\em vertices\/} and the {\em edges\/} of $T$. The drawing has to satisfy the following general position conditions: (1) the edges pass through no vertices except their endpoints, (2) every two edges have only a finite number of intersection points, (3) every intersection point of two edges is either a common endpoint or a proper crossing (``touching'' of the edges is not allowed), and (4) no three edges pass through the same crossing. A topological graph is {\em simple\/} if every two edges have at most one common point, which is either a common endpoint or a crossing. A topological graph is {\em complete\/} if it is a drawing of a complete graph. 

We use two different notions of isomorphism to enumerate topological graphs.

Topological graphs $G$ and $H$ are {\em weakly isomorphic\/} if there exists an incidence preserving one-to-one correspondence between $V(G), E(G)$ and $V(H), E(H)$ such that two edges of $G$ cross if and only if the corresponding two edges of $H$ do. 

Note that every topological graph $G$ drawn in the plane induces a drawing $G_{S^2}$ on the sphere, which is obtained by a standard one-point compactification of the plane.  
Topological graphs $G$ and $H$ are {\em isomorphic\/} if there exists a homeomorphism of the sphere which transforms $G_{S^2}$ into $H_{S^2}$. 
In Section~\ref{section_horni_strong} we give an equivalent combinatorial definition.

Unlike isomorphism, weak isomorphism can change the faces of the topological graphs involved, the order of crossings along the edges and also the cyclic orders of edges around vertices. 

For counting (weak) isomorphism classes, we consider all graphs labeled. That is, each vertex is assigned a unique label from the set $\{1,2,\dots,n\}$, and we require the (weak) isomorphism to preserve the labels. Mostly it makes no significant difference in the results as we operate with quantities asymptotically larger than $n!$.


For a graph $G$, let $T_\mathrm{w}(G)$ be the number of weak isomorphism classes of simple topological graphs that realize $G$. 
Pach and T\'oth~\cite{PT04_how} and the author~\cite{K06_crossings} proved the following lower and upper bounds on $T_\mathrm{w}(K_n)$.

\begin{theorem}\label{veta_uplne_historicka} {\rm \cite{K06_crossings, PT04_how}} For the number of weak isomorphism classes of simple drawings of $K_n$, we have
$$2^{\Omega(n^2)} \le T_\mathrm{w}(K_n) \le ((n-2)!)^n = 2^{O(n^2\log{n})}.$$
\end{theorem}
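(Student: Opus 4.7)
The plan is to prove the upper and lower bounds separately, since they come from very different techniques.

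For the upper bound $T_\mathrm{w}(K_n)\le((n-2)!)^n$, the approach is to attach to every simple drawing of $K_n$ a compact combinatorial invariant and then show that this invariant already determines the weak isomorphism class. The natural invariant is the \emph{rotation system}: for each vertex $v$, record the cyclic ordering in which its $n-1$ incident edges emanate from $v$. A cyclic order on $n-1$ symbols has $(n-2)!$ distinct representatives, so the number of rotation systems realizable on $n$ labeled vertices is at most $((n-2)!)^n$. To finish, I would invoke the structural statement that for simple drawings of $K_n$ the rotation system reconstructs the entire crossing pattern, and in fact the drawing up to weak isomorphism. The proof of this structural claim proceeds by induction on $n$: one deletes a vertex, reconstructs the smaller drawing from its induced rotation system, and then shows that there is essentially only one way to reinsert the deleted vertex consistent with its recorded rotation, using the simplicity assumption to force the crossings of every reinserted edge to be uniquely determined.

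For the lower bound $T_\mathrm{w}(K_n)\ge 2^{\Omega(n^2)}$, the strategy is to construct an explicit family of $2^{\Omega(n^2)}$ pairwise weakly non-isomorphic simple drawings of $K_n$ indexed by binary strings. I would start from a well-behaved base drawing (for instance, the rectilinear convex-position drawing, or a double-spiral/cylindrical drawing in which many edges are arranged in near-parallel bundles) and identify a set $S$ of pairs of edges $\{e,f\}$ such that the crossing state of each pair can be independently flipped by a small local re-routing that creates or destroys exactly the crossing $e\cap f$ and does not introduce any new crossing or double intersection. Each subset $S'\subseteq S$ then yields a simple drawing whose set of crossing pairs differs from the base on exactly $S'$, so the $2^{|S|}$ drawings pairwise differ in their crossing patterns and hence lie in pairwise distinct weak isomorphism classes. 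Exhibiting $|S|=\Omega(n^2)$ independent toggles is possible because in a suitable base drawing there are quadratically many disjoint local ``gadgets,'' each controlling one crossing without interfering with any other.

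The main obstacle in the upper bound is the structural reconstruction lemma: rotation systems carry no direct information about which pairs of edges cross, and recovering the crossing pattern requires exploiting both completeness (every pair of vertices is joined) and simplicity (no two edges share two points). Without these, a single rotation system corresponds to many non-weakly-isomorphic drawings, and indeed this is why the analogous upper bound for general graphs (addressed elsewhere in the paper) is substantially harder. The main obstacle in the lower bound is a combinatorial-geometric one: one must arrange quadratically many edge-pairs whose crossing states are genuinely independent under local perturbations, which forces a careful choice of base drawing and of the toggling gadget so that simplicity is preserved and no toggle accidentally alters a second crossing.
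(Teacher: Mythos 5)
Your upper bound is the same as the paper's: bound the number of rotation systems by $((n-2)!)^n$ and invoke the fact (Proposition~\ref{tvrz_1graf-2rot}) that for simple complete drawings the rotation system determines the crossing pattern. Where you diverge is in the proof of that structural fact. Your ``delete a vertex, reinsert it'' induction has a real subtlety: the induction hypothesis only pins down the $K_{n-1}$ drawing up to \emph{weak} isomorphism (same crossing pairs), not up to homeomorphism, so there is no canonical ``smaller drawing'' into which you can reinsert the deleted vertex; weakly isomorphic drawings can have different face structures, and reinsertion is ambiguous across them. The argument the literature uses, and which this paper's Observation~\ref{obs_8_rotacnich_systemu} and Lemma~\ref{lemma_triangle} point to, is cleaner and needs no induction: two adjacent edges never cross by simplicity, and for two independent edges $uv,xy$ one restricts to the $K_4$ on $\{u,v,x,y\}$ and uses the parity condition, which says the four rotations on that $K_4$ already decide whether the diagonal pair crosses. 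That localizes the whole reconstruction to $4$-vertex subgraphs. For the lower bound you have the right framework (independent binary choices giving distinct crossing patterns), but your suggested convex-position base is not obviously usable: in that drawing the crossings are not realized by disjoint local gadgets, and rerouting one straight edge to undo a single crossing tends to create or destroy others. The Pach--T\'oth construction that this bound actually comes from (and which Section~6 of the paper generalizes) places the vertices on three parallel lines so that $\Theta(n^2)$ edges each pass close to a ``middle'' vertex and can be routed just above or just below it; a single toggle may change several crossings at once, but distinct toggles affect disjoint sets of crossing pairs, which is all that is needed to get $2^{\Omega(n^2)}$ distinct weak isomorphism classes. So your outline captures the right ideas, but the ``each gadget controls exactly one crossing'' requirement and the convex base are both stronger than what is actually available or needed.
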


We prove generalized upper and lower bounds on $T_\mathrm{w}(G)$ for all graphs $G$. 

\begin{theorem}\label{veta_hlavni}
Let $G$ be a graph with $n$ vertices and $m$ edges. Then
$$T_\mathrm{w}(G) \le 2^{O(n^{2}\log (m/n))}.$$
If $m<n^{3/2}$, then
$$T_\mathrm{w}(G) \le 2^{O(mn^{1/2}\log n)}.$$
Let $\varepsilon>0$. If $G$ is a graph with no isolated vertices and at least one of the conditions $m>(1+\varepsilon)n$ or $\Delta(G)<(1-\varepsilon)n$ is satisfied, then
$$T_\mathrm{w}(G) \ge 2^{\Omega(\max (m, n\log{n}))}.$$
\end{theorem}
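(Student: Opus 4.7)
The plan is to establish the three bounds separately. The upper bounds will be obtained by encoding each weak isomorphism class by a compact combinatorial invariant, while the lower bound will be proved by explicit construction.

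For the first upper bound $T_\mathrm{w}(G)\le 2^{O(n^2\log(m/n))}$, the basic invariant will be the \emph{rotation system} of the drawing (the cyclic order of edges around each vertex), supplemented by enough additional data to pin down which pairs of independent edges cross. The number of rotation systems of $G$ is at most $\prod_v (d_v-1)!$, which by convexity of $x\log x$ together with $\sum_v d_v=2m$ is already at most $2^{O(m\log(m/n))}$. To control the remaining freedom, I would show that a simple drawing of $G$ can be completed combinatorially to a structure on the full edge set of $K_n$ (an ``AT-graph'', in the sense of the author's earlier work) in at most $2^{O(n^2\log(m/n))}$ essentially different ways, and then invoke Theorem~\ref{veta_uplne_historicka} together with the uniqueness theorem for simple drawings of $K_n$ given their rotation system to conclude.

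For the sparse upper bound $T_\mathrm{w}(G)\le 2^{O(mn^{1/2}\log n)}$ when $m<n^{3/2}$, I would exploit the fact that a simple topological graph with relatively few edges carries little crossing information on average. Concretely, I would argue (via iterated applications of the crossing lemma, or by counting crossings incident to each vertex using an Euler-type bound on topological subgraphs) that the weak isomorphism class can be encoded by recording, for each edge, its crossings in cyclic order along the edge, so that the total description has length $O(mn^{1/2}\log n)$ bits; each crossing along an edge costs $O(\log n)$ bits to identify the other edge, and the regime $m<n^{3/2}$ limits the average number of crossings per edge to $O(n^{1/2})$.

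For the lower bound, I would work with a conveniently chosen base drawing of $G$ and build a family of $2^{\Omega(\max(m,n\log n))}$ pairwise non-weakly-isomorphic drawings by local modifications. Under the hypothesis $m>(1+\varepsilon)n$, the excess over a spanning structure produces $\Omega(m)$ ``flippable'' edge pairs where a small homotopy changes the crossing/non-crossing status of that pair without affecting other pairs, yielding $2^{\Omega(m)}$ classes; the condition that $G$ has no isolated vertices ensures each such local flip stays within the space of simple drawings. Under the alternative hypothesis $\Delta(G)<(1-\varepsilon)n$, enough vertices have neighborhoods of moderate size that one can independently cyclically permute sufficiently many of them to obtain $(n!)^{\Omega(1)}=2^{\Omega(n\log n)}$ distinct rotation systems, and one then checks that each realizable rotation system corresponds to a distinct weak isomorphism class of simple drawings.

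The main obstacle will be the first upper bound. The implication ``rotation system determines weak isomorphism class'' is already subtle for $K_n$, and it fails outright for general $G$; the work therefore lies in quantifying how much extra combinatorial freedom the missing edges of $K_n\setminus G$ introduce and showing that this freedom is compatible with a completion argument that multiplies the rotation-system count only by $2^{O(n^2\log(m/n))}$ rather than by the naive $2^{\binom{m}{2}}$.
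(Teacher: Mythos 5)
There are several genuine gaps, and the most serious one is in your first upper bound. You propose to complete a simple drawing of $G$ to a simple complete drawing and invoke the uniqueness of crossing pattern given the rotation system for $K_n$. This cannot work: the paper explicitly observes (Section~\ref{section_last}, pointing to Figure~\ref{obr_8}, left) that there exist simple topological graphs that cannot be extended to simple drawings of $K_n$, and for this reason it is not even known whether $T_\mathrm{w}$ is monotone under the subgraph relation. Even a purely combinatorial ``completion to an AT-graph on $K_n$'' would not help, since Proposition~\ref{tvrz_1graf-2rot} is a statement about genuine drawings, not about arbitrary abstract rotation systems. The paper's actual route is entirely different: it builds a topological spanning tree $\mathcal{T}$, cuts along it to turn the edges into pseudochords in a polygon, and bounds the number of combinations of pseudochord types by grouping them per vertex (Jensen's inequality then gives the $2^{O(n^2\log(m/n))}$ factor) and, for the sparse regime, per $\sqrt{n}$-tuple of edges via Lemma~\ref{lemma_ktice_hran}.

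Your second upper bound is also based on a false premise: there is no bound of $O(n^{1/2})$ on the average number of crossings per edge when $m<n^{3/2}$. Two edges of a simple topological graph cross at most once, so there can be up to $\binom{m}{2}$ crossings; the average per edge can therefore be $\Theta(m)=\Theta(n^{3/2})$ in the worst case, and the crossing lemma only gives a lower bound on crossings, so no Euler-type argument will save the $O(\log n)$-bits-per-crossing encoding. For the lower bound, the first half (flippable pairs for $m>(1+\varepsilon)n$) is in the right spirit and close to the paper's construction on three vertical lines, but the second half relies on ``each realizable rotation system corresponds to a distinct weak isomorphism class'', which is false for general $G$: the paper itself exhibits $2^{n/2}$ weakly nonisomorphic drawings of $K_{2,n}$ with the same rotation system, and conversely a crossing-free drawing has the same (empty) crossing set regardless of the rotation system. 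The paper's construction instead places the vertices of $V_1$ in the $\Theta(n)$ regions cut out by the edges of an induced subgraph $G[V_3]$ so that the choice of region directly and independently determines which edges of $G[V_3]$ each new edge crosses, avoiding the rotation-system fallacy entirely.
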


We also improve the upper bound from Theorem~\ref{veta_uplne_historicka}.

\begin{theorem}\label{veta_uplne} We have 
$$T_\mathrm{w}(K_n) \le 2^{n^2\cdot \alpha(n)^{O(1)}}.$$
\end{theorem}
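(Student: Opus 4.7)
The plan is to follow the Pach--T\'oth strategy of counting rotation systems, and then to improve their naive $((n-2)!)^n$ estimate by exploiting the Cibulka--Kyn\v cl bound for permutation families with bounded VC-dimension.

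First, I rely on the fact (implicit in the proof of Theorem~\ref{veta_uplne_historicka}) that for a simple complete topological graph, the weak isomorphism class is determined by the rotation system --- the cyclic order of edges around each vertex. Fixing a reference neighbor at each vertex $v$ turns the rotation at $v$ into a linear permutation $\pi_v$ of $\{1,\ldots,n\}\setminus\{v\}$, and a rotation system becomes the tuple $(\pi_1,\ldots,\pi_n)$. The task reduces to bounding the number of realizable such tuples.

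The heart of the argument is to show that the rotations $\pi_v$ arising from simple drawings of $K_n$ form a family of permutations whose VC-dimension, in the sense of Cibulka and Kyn\v cl, is bounded by an absolute constant. Geometrically, a shattering configuration of constant size should force certain small subdrawings in which two edges would either cross more than once or meet in a topologically forbidden way; the single-crossing condition, combined with the Jordan curve theorem, should rule out such configurations. Once bounded VC-dimension is in hand, the Cibulka--Kyn\v cl theorem asserts that any family of permutations of $[N]$ with bounded VC-dimension has size at most $2^{N\cdot\alpha(N)^{O(1)}}$. Setting $N=n-1$ yields at most $2^{n\cdot\alpha(n)^{O(1)}}$ choices for each $\pi_v$; multiplying over the $n$ vertices produces the advertised bound $2^{n^2\cdot\alpha(n)^{O(1)}}$ on the number of realizable rotation systems, and hence on $T_\mathrm{w}(K_n)$.

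The main obstacle I anticipate is making the VC-dimension step precise: one must pin down a notion of VC-dimension for permutations that matches the hypothesis of the Cibulka--Kyn\v cl theorem, and then show that realizability by a simple drawing of $K_n$ forces this dimension to be bounded by an absolute constant. The remaining ingredients --- reduction to rotation systems, normalization to linear permutations, and the final product over vertices --- are routine once the VC-dimension input is available.
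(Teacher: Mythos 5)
Your starting point is right --- Proposition~\ref{tvrz_1graf-2rot} reduces $T_\mathrm{w}(K_n)$ to counting realizable rotation systems, and Theorem~\ref{veta_vcdimenze} is indeed the key new input --- but the central claim of your plan is false, and this gap is precisely what the paper's recursion is designed to overcome.

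You propose to show that, for a fixed vertex $v$, the family $\mathcal{P}_v$ of rotations of $v$ arising over \emph{all} simple drawings of $K_n$ has VC-dimension bounded by an absolute constant. This cannot be true: $\mathcal{P}_v$ is the set of \emph{all} cyclic permutations of $V\setminus\{v\}$. To see this, simply place the $n-1$ other vertices on a circle around $v$ in any prescribed cyclic order and join all pairs by straight segments; this geometric graph is simple, and the rotation of $v$ is exactly the chosen cyclic order. After fixing an anchor to linearize, $\mathcal{P}_v$ is the full symmetric group on $n-2$ elements, which shatters every set of positions, so its VC-dimension is $n-2$, not $O(1)$. Consequently the "multiply over the $n$ vertices'' step has no valid input, and the proof does not go through. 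The Jordan-curve/forbidden-crossing intuition you describe only produces a \emph{conditional} forbidden pattern: Lemmas~\ref{lemma_C5} and~\ref{lemma_T6} forbid a specific cyclic pattern in the rotation of $v$ only once it is known that the relevant $5$- or $6$-tuple of other vertices induces a $C_5$ or $T_6$ with a prescribed orientation. Over all drawings, different $5$-tuples play that role with different orientations, so there is no single pattern forbidden at every tuple of positions.

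The paper repairs exactly this by a divide-and-conquer recursion (Subsection~\ref{sub_dukaz_vety_uplne}): split the vertices into two halves $V_1$ and $V_2$, and first \emph{fix} the rotation systems $\mathcal{R}_1$ of $K[V_1]$ and $\mathcal{R}_2$ of $K[V_2]$. With $\mathcal{R}_1$ fixed, the Ramsey-type Theorem~\ref{theorem_Cm1_or_Tm2} guarantees that every $N$-subset of $V_1$ (with $N$ an absolute constant) contains a $C_5$ or $T_6$ \emph{in a determined orientation}, so Lemmas~\ref{lemma_C5} and~\ref{lemma_T6} give a concrete forbidden cyclic pattern for that $N$-tuple. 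Only then does the family of possible rotations of a vertex $v_i\in V_2$, restricted to $V_1$, have VC-dimension below $N$, and only then can Theorem~\ref{veta_vcdimenze} be applied, giving at most $2^{(n/2)\alpha(n/2)^{O(1)}}$ choices per vertex. A secondary bookkeeping step bounds the number of ways to interleave the $V_1$-part and $V_2$-part of each rotation by $2^{O(n)}$. The resulting recursion $g(n)\le g(n/2)^2\cdot 2^{O(n^2\alpha(n)^{O(1)})}$ then unrolls to $g(n)=2^{O(n^2\alpha(n)^{O(1)})}$. Without this conditioning and recursion, the VC-dimension input is simply unavailable, so the step you flag as "the main obstacle'' is a genuine and unbridged gap, not a technicality.
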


Here $\alpha(n)$ is the inverse of the Ackermann function. It is an extremely slowly growing function, which can be defined in the following way~\cite{N10_improved}.
$\alpha(m) := \min\{k: \alpha_k(m) \leq 3\}$ where $\alpha_d(m)$ is the $d$th function in the \emph{inverse Ackermann hierarchy}. That is, $\alpha_1(m)=\lceil m/2 \rceil$,
$\alpha_d(1)=0$ for $d \geq 2$ and $\alpha_d(m) = 1+ \alpha_d(\alpha_{d-1}(m))$ for $m,d  \geq 2$. The constant in the $O(1)$ notation in the exponent is huge (roughly $4^{30^4}$), due to a Ramsey-type argument used in the proof.

Theorem~\ref{veta_uplne} is proved in Section~\ref{section_uplne}. In the proof of Theorem~\ref{veta_uplne} we use the fact that for simple complete topological graphs, the weak isomorphism class is determined by the rotation system~\cite{K09_enumeration, PT04_how} (see Proposition~\ref{tvrz_1graf-2rot}). This is combined with a recent combinatorial result, an upper bound on the size of a set of permutations with bounded VC-dimension~\cite{CK12_zobecnena_SW} (Theorem~\ref{veta_vcdimenze}). The method in the proof of Theorem~\ref{veta_hlavni} is more topological, gives a slightly weaker upper bound, but can be generalized to all graphs.

In Subsection~\ref{section_combinatorial}, we generalize Theorem~\ref{veta_uplne} by removing almost all topological aspects of the proof. The resulting Theorem~\ref{veta_kombinatoricka} is a purely combinatorial statement.

In Subsection~\ref{sub_max_crossings}, we consider the class of simple complete topological graphs with maximum number of crossings and suggest an alternative method for obtaining an upper bound on the number of weak isomorphism classes of such graphs.

An arrangement of {\em pseudosegments\/} (or also {\em $1$-strings\/}) is a set of simple curves in the plane such that any two of the curves cross at most once. An {\em intersection graph of pseudosegments\/} (also called a {\em string graph of rank $1$\/}) is a graph $G$ such that there exists an arrangement of pseudosegments with one pseudosegment for each vertex of $G$ and a pair of pseudosegments crossing if and only if the corresponding pair of vertices forms an edge in $G$. Using tools from extremal graph theory, Pach and T\'oth~\cite{PT04_how} proved that the number of intersection graphs of $n$ pseudosegments is $2^{o(n^2)}$. 
As a special case of Theorem~\ref{veta_hlavni} we obtain the following upper bound. 

\begin{theorem}\label{veta_pseudosegmenty}
There are at most $2^{O(n^{3/2}\log n)}$ intersection graphs of $n$ pseudosegments.
\end{theorem}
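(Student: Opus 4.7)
The plan is to deduce Theorem~\ref{veta_pseudosegmenty} directly from Theorem~\ref{veta_hlavni} by viewing each pseudosegment as an edge of a perfect matching. Given an arrangement of $n$ pseudosegments, I first declare the two geometric endpoints of each pseudosegment to be vertices; after an arbitrarily small perturbation that does not alter which pairs of pseudosegments cross, I may assume that the $2n$ endpoints are distinct and that the remaining general position conditions (no tangencies, no triple crossings, no vertex lying on a non-incident edge) hold. This produces a simple topological graph $T$ whose underlying abstract graph is a fixed perfect matching $M$ on $N=2n$ labeled vertices with $m=n$ edges, where the endpoints of the $i$-th pseudosegment are identified with the endpoints of the $i$-th edge of $M$.

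Two pseudosegments cross in the arrangement if and only if the corresponding two edges of $T$ cross, so the labeled intersection graph of the arrangement is determined by the weak isomorphism class of $T$ viewed as a realization of $M$. Hence the number of labeled intersection graphs of $n$ pseudosegments is at most $T_{\mathrm{w}}(M)$. Since $m = n \le N^{3/2} = (2n)^{3/2}$, the second inequality of Theorem~\ref{veta_hlavni} applies and yields
$$T_{\mathrm{w}}(M) \le 2^{O(m\, N^{1/2} \log N)} = 2^{O(n\cdot (2n)^{1/2}\log(2n))} = 2^{O(n^{3/2}\log n)},$$
which is precisely the target bound.

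I do not foresee a genuine obstacle here: the excerpt itself advertises Theorem~\ref{veta_pseudosegmenty} as a special case of Theorem~\ref{veta_hlavni}, and the only step requiring more than a line of justification is the initial perturbation, which is standard. One might worry about labeling conventions, since there are $2^n$ ways of identifying the two endpoints of each pseudosegment with the two endpoints of the corresponding matching edge, but this potential overcount only inflates the count by a factor of $2^n$, which is harmlessly absorbed into the $2^{O(n^{3/2}\log n)}$ estimate.
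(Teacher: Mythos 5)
Your proposal is correct and matches the paper's approach exactly: the paper states in one line that Theorem~\ref{veta_pseudosegmenty} is the special case of Theorem~\ref{veta_hlavni} where $G$ is a matching, which is precisely the reduction you carry out. Your additional remarks on perturbation and endpoint-labeling are standard and correct, and the $2^n$ overcount you mention is indeed negligible.
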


The best known lower bound for the number of (unlabeled) intersection graphs of $n$ pseudosegments is $2^{\Omega(n\log n)}$. This follows by a simple construction or from the the fact that there are $2^{\Theta(n\log n)}$ nonisomorphic permutation graphs with $n$ vertices~\cite{BG09_localized}. 

Let $T(G)$ be the number of isomorphism classes of simple topological graphs that realize $G$. 
A sequence $G_1,G_2,\dots$ of graphs where $G_n$ has $n$ vertices and $m=m(n)$ edges has {\em superlinear\/} number of edges if $m(n)>\omega(n)$, that is, if for every constant $c$, we have $m(n)<cn$ for sufficiently large $n$.
The following theorem generalizes the result $T(K_n)= 2^{\Theta(n^4)}$ from~\cite{K09_enumeration}.

\begin{theorem}\label{veta_neizo}
Let $G$ be a graph with $n$ vertices, $m$ edges and no isolated vertices. Then $T(G) \le 2^{m^2+O(mn)}$. More precisely,
\begin{align*}
 1) \text{ } T(G)  &\le 2^{m^2+11.51mn+O(n\log n)} \text{ and } T(G) \le 2^{23.118 m^2}+o(1), \\
 2) \text{ } T(G)  &\le 2^{m^2+2mn(\log(1+\frac{m}{4n})+3.443) +O(n\log n)} \text{ and } T(G) \le 2^{11.265 m^2}+o(1).  
\end{align*}
Let $\varepsilon>0$. For graphs $G$ with $m>(6+\varepsilon)n$ we have
$$T(G) \ge 2^{\Omega(m^2)}.$$
For a sequence of graphs $G_n$ with superlinear number of edges we have
$$T(G_n) \ge 2^{m^2/60}-o(1).$$
\end{theorem}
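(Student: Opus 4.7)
The plan is to prove the upper and lower bounds separately, building on the author's prior treatment of $K_n$ in \cite{K09_enumeration} and on Theorem~\ref{veta_hlavni}.

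For the upper bound I would observe that the isomorphism class of a simple topological graph $T$ realizing $G$ is determined by the planar map on the sphere obtained from the planarization $T^{\times}$, formed by replacing each crossing with a degree-$4$ vertex. The planarization has $n+c$ vertices and $m+2c$ edges, where $c\le\binom{m}{2}$. Enumeration splits into two stages: (i) pick a weak isomorphism class, and hence the abstract planarization graph, using $T_\mathrm{w}(G)\le 2^{O(n^2\log(m/n))}$ from Theorem~\ref{veta_hlavni}; (ii) for a fixed weak class count valid rotation systems of $T^{\times}$, noting that the rotation at each crossing is forced to be alternating (only $2$ choices per crossing instead of $3!=6$) while the original vertices contribute $\prod_{v\in V(G)}(d_G(v)-1)!$, which is $\exp(O(mn))$ by Stirling/Jensen. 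Combining these with a Tutte-type enumeration of planar maps and optimizing the trade-off between the number of crossings $c$ and the rotation contribution should yield the bound $2^{m^2+O(mn)}$ and its explicit numerical variants.

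For the lower bound I would construct an explicit family of pairwise non-isomorphic drawings by adapting the $T(K_n)\ge 2^{\Omega(n^4)}$ construction of \cite{K09_enumeration}: start with a drawing having many crossings and perform local surgeries at crossings (swapping which half-edges are paired, or rerouting an edge across a face) to obtain $\Omega(m^2)$ independent binary choices producing distinct isomorphism classes in the same weak class. To extend to a general graph $G$ with $m>(6+\varepsilon)n$ edges, the density assumption is used to extract a dense substructure supporting the surgeries; the threshold $6$ reflects the planarity bound $m\le 3n-6$, so $m>6n$ forces $\Omega(m)$ crossings per drawing and, after iteration, $\Omega(m^2)$ degrees of freedom. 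The superlinear bound $2^{m^2/60}$ comes from quantifying the surgeries more carefully.

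The main obstacle is calibrating the numerical constants in the upper bound: a generic Tutte-style count of planar maps yields a leading constant $\log_2 12\approx 3.585$ in front of $m^2$, so obtaining the leading coefficient $1$ requires exploiting the alternating-rotation constraint at crossings tightly, and the two alternative bounds arise from different optimizations of $\prod_v(d_G(v)-1)!$ in terms of $m$ and $n$. For the lower bound, the challenge is to realize the surgery inside an arbitrary graph rather than inside $K_n$, whose full symmetry \cite{K09_enumeration} exploited, and to confirm that the resulting drawings are pairwise non-isomorphic.
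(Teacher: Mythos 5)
Your proposal identifies the right shape for the result but your upper-bound strategy has a genuine gap, and your explanation of the threshold $6$ in the lower bound is incorrect.

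For the upper bound, you acknowledge the key difficulty yourself: a generic Tutte-style count of planar maps gives a leading constant of $\log_2 12\approx 3.585$ (or $\log_2(256/27)\approx 3.245$ for loopless maps) in front of $m^2$, and you say this must be fixed by ``exploiting the alternating-rotation constraint tightly'' — but you give no concrete mechanism for doing so, and the factor-of-$2$ savings per crossing ($2$ vs.\ $3!=6$) only removes a $\log_2 3$ per crossing, nowhere near enough to get from $\approx 3.245$ down to $1$. Moreover, the two-stage split you propose (pick a weak isomorphism class, then a rotation system for the planarization) does not by itself control the orders of crossings along edges: with $c\le\binom{m}{2}$ crossings one naively gets $\prod_e k_e!\le 2^{O(m^2\log m)}$ choices for these orders, which is already too large. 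The paper avoids both problems by a different decomposition: it cuts the sphere along a \emph{topological spanning tree} of $\mathcal{T}$ of the drawing (a structure built from $O(n)$ edge portions, enumerated in $2^{O(n\log n)}$ ways by Lemma~\ref{lemma_trees}), turning the remaining edge portions into pseudochords in a disc. Then Proposition~\ref{prop_chord_diagrams} (via Read's sawtooth encoding) bounds the number of perimetric orders with at most $\binom{m}{2}$ crossings, and the crucial new ingredient, Proposition~\ref{prop_2na2k_chords}, shows that for a \emph{fixed} perimetric order the number of isomorphism types of pseudochord arrangements with $k$ crossings is at most $2^{2k}$, via a compact binary encoding proved correct by an inductive ``uncrossing'' argument. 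This $2^{2k}\le 2^{m^2}$ is where the leading coefficient $1$ comes from; the alternative upper bound comes from encoding the arrangement via its dual quadrangulation and Mullin–Schellenberg. None of this is a planar-map count in your sense, and without something like Proposition~\ref{prop_2na2k_chords} you will not get below a constant of roughly $3.2$ in the exponent.

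For the lower bound, your intuition about $\Omega(m^2)$ independent local choices is on the right track (the paper indeed builds a grid of roughly $\Theta(m)\times\Theta(m)$ crossings and threads $\Theta(m)$ parallel edges through its diagonals), but your justification of the threshold $m>(6+\varepsilon)n$ as ``reflecting the planarity bound $m\le 3n-6$'' is not what is going on. The constant $6$ comes from the second-moment argument in Lemma~\ref{lemma_partitions}: one partitions $V(G)$ into $q=6$ random clusters and needs the three bipartite graphs $G[V_1,V_4]$, $G[V_2,V_5]$, $G[V_3,V_6]$ to each have $\Omega(m)$ edges, and the error term in the lemma is $\sqrt{r(q-2)n/(2m)}=\sqrt{6n/m}$, which is $<1$ precisely when $m>6n$. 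Planarity guarantees only $\Omega(m-3n)$ crossings, not $\Omega(m^2)$ independent binary choices. Finally, the $2^{m^2/60}-o(1)$ bound in the paper is obtained by a careful template argument (a convex $K_{10}$ with an independent system of free triangles, free $4$-tuples and a free $5$-tuple, combined with the Felsner–Valtr count of partial pseudoline arrangements); your proposal leaves this entirely to ``quantifying the surgeries more carefully.''
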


The two upper bounds on $T(G)$ come from two essentially different approaches. The first one gives better asymptotic results for dense graphs, whereas the second one is better for sparse graphs (roughly, with at most $17n$ edges). 
For such very sparse graphs (for example, matchings), however, better upper bounds can be deduced more directly from other known results; see the discussion in Subsection~\ref{sub_very_sparse}.

The proof in~\cite{K09_enumeration} implies the upper bound $T(K_n)\le 2^{(1/12+o(1))(n^4)}$, although it is not explicitly stated in the paper. However, the key Proposition $7$ in~\cite{K09_enumeration} is incorrect. We prove a correct version in Section~\ref{section_horni_strong}.

In Section~\ref{section_geometric} we briefly discuss the special case of geometric graphs.

All the logarithms used in this paper are binary, unless indicated otherwise.


\section{Preliminaries}

The weak isomorphism classes of topological graphs can be represented in a combinatorial way by abstract topological graphs. An {\em abstract topological graph\/} (or briefly an {\em AT-graph}) is a pair $(G,R)$ where $G$ is a graph and $R \subseteq {E(G) \choose 2}$ is a set of pairs of its edges. For a topological graph $T$ that is a drawing of $G$ we define the AT-graph of $T$ as $(G,R_T)$ where $R_T$ is the set of pairs of edges having at least one common crossing. A (simple) topological graph $T$ is called a {\em (simple) realization\/} of $(G,R)$ if $R_T=R$. 
Clearly, two topological graphs are weakly isomorphic if and only if they are realizations of the same AT-graph. 

The {\em rotation\/} of a vertex $v$ in a topological graph $T$ is the clockwise cyclic order of the edges incident with $v$. The rotation $\rho(v)$ of a vertex $v$ is represented by a cyclic sequence of the vertices adjacent to $v$. The {\em rotation system\/} of $T$ is the set of rotations of all its vertices. 

We use the following property of simple complete topological graphs, which directly implies the upper bound on $T_\mathrm{w}(K_n)$ in Theorem~\ref{veta_uplne_historicka}. 

\begin{proposition}\label{tvrz_1graf-2rot}{\rm \cite{K09_enumeration, PT04_how}}
The rotation system of a simple complete topological graph $G$ uniquely determines which pairs of edges of $G$ cross. That is, two simple complete topological graphs with the same rotation system are weakly isomorphic.
\end{proposition}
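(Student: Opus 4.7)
The plan is to reduce the proposition to the case $n=4$ and then verify it for $K_4$ by a direct analysis. The reduction is as follows: given any two disjoint edges $e_1 = v_1 v_2$ and $e_2 = v_3 v_4$ of $G$, the sub-drawing $G^{*}$ of $G$ induced by the four vertices $v_1, v_2, v_3, v_4$ is itself a simple topological drawing of $K_4$, whose edges are literally the same curves as in $G$ (so $e_1$ and $e_2$ cross in $G^{*}$ iff they cross in $G$), and whose rotation system is obtained from that of $G$ by deleting every vertex outside $\{v_1, v_2, v_3, v_4\}$ from each cyclic sequence. Hence, once the proposition is established for $K_4$, it follows for all $n$. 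Adjacent edges never cross in a simple drawing, so those pairs require no analysis.

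For the base case $K_4$, I would proceed as follows. A simple drawing of $K_4$ has at most one crossing, since the only crossings possible are among the three disjoint-edge pairs (perfect matchings). Fix any triangle of $K_4$, say $T = 234$: its three edges $23, 34, 24$ pairwise share a vertex and so are pairwise non-crossing, making $T$ a simple closed Jordan curve on the sphere that divides it into two open regions. Vertex $1$ lies in one of them. For each $j \in \{2,3,4\}$, the edge $1j$ shares a vertex with two sides of $T$, so it can only cross the side opposite to $j$, and whether it does is forced by parity. By the Jordan curve theorem, the parity of crossings of $1j$ with $T$ equals the parity needed for $1j$ to end on the side of $T$ containing vertex $1$, starting from the side into which it departs at vertex $1$. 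The latter is dictated by the wedge at vertex $1$ in which $1j$ lies — information read directly off the rotation at vertex $1$. The side of $T$ containing vertex $1$ is similarly encoded (up to a binary orientation choice) by the rotations at $2, 3, 4$ — specifically, for each $i \in \{2,3,4\}$, the position of $1$ in the rotation at $i$ relative to the two neighbors of $i$ on $T$ tells us which wedge at $i$ the edge $1i$ enters, hence which side of $T$ the other endpoint lies on.

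The main obstacle is the orientation bookkeeping in the base case: one must pick a convention for which side of $T$ is the "inner" side, verify that the wedge data at all four vertices are consistent (as they must be in any realizable drawing), and then check that this consistency uniquely determines the crossing pattern in all realizable cases. This is a finite check — at most $2^4 = 16$ rotation systems to consider, each a combination of the two cyclic orders possible at each vertex — but it requires a careful case analysis, since the map from rotation systems to crossing patterns is not injective (distinct rotation systems may yield the same crossings, as happens for the two orientation-reversed realizations of a planar $K_4$). Once this base case is in hand, the sub-drawing reduction extends the proposition to arbitrary $n$ at no additional cost.
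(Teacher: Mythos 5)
Your reduction to $K_4$ via restriction of the rotation system to any four vertices, followed by a finite check of the realizable $K_4$ rotation systems, is exactly the argument the paper records (see Observation~10, Table~1, and Lemma~7) and attributes to the cited references. Two minor slips in the Jordan-curve sketch are worth flagging, though neither is a gap since you correctly identify the orientation bookkeeping as the crux and fall back on the finite check: first, the parenthetical only shows a simple drawing of $K_4$ has at most \emph{three} crossings, not at most one --- the stronger fact is true but is itself a corollary of the finite check, and you never actually need it, since simplicity already guarantees that $1j$ meets the side of $T=234$ opposite $j$ at most once, which is all the parity argument requires; second, vertex $1$ does not lie on $T$, so there is no per-edge ``wedge at vertex $1$'' in which $1j$ departs --- all three edges leave vertex $1$ into the single region of the sphere minus $T$ that contains it, and the rotation at vertex $1$ serves only to break the mirror-image ambiguity, being in fact forced by the rotations at $2,3,4$ via the $K_4$ parity condition discussed after Observation~10.
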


This property is not satisfied for arbitrary graphs. For complete bipartite graphs, many weakly nonisomorphic drawings can share the same rotation system. For example, there are at least $2^{n/2}$ weakly nonisomorphic simple drawings of $K_{2,n}$ with the same rotation system. To see this, let $n$ be an even positive integer and let $v,w,u_1,u_2, \dots, u_n$ be the vertices of $K_{2,n}$ with $v,w$ forming the $2$-element independent set of the bipartition. Let $(u_1,u_2,\dots,u_n)$ be the rotation of $v$ and $(u_{n-1},u_n,\dots,u_3,u_4,u_1,u_2)$ the rotation of $w$. For every $i=1,2,\dots,n/2$, there are two ways of drawing the four edges $vu_{2i},vu_{2i-1},wu_{2i},wu_{2i-1}$ (either $vu_{2i-1}$ crosses $wu_{2i}$ or $wu_{2i-1}$ crosses $vu_{2i}$), and these choices can be done independently. See Figure~\ref{obr_0_C4_dvema_zpusoby}. Note that by cloning the vertex $v$ into $n-1$ copies we obtain $2^{n/2}$ weakly nonisomorphic drawings of $K_{n,n}$ with the same rotation system.

\begin{figure}
\begin{center}
\epsfig{file={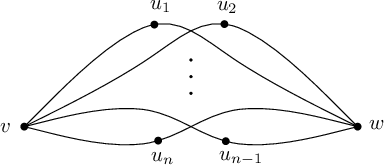}, scale=1}
\end{center}
\caption{Every $4$-cycle $vu_{2i-1}wu_{2i}$ in $K_{2,n}$ can be drawn in one of two ways, while keeping the rotation system fixed.}
\label{obr_0_C4_dvema_zpusoby}
\end{figure}

We note that the converse of Proposition~\ref{tvrz_1graf-2rot} is also true: the rotation systems of two weakly isomorphic simple complete topological graphs are either the same or inverse~\cite{G05_complete,K09_enumeration}.


\section{Simple complete topological graphs}\label{section_uplne}

In this section we prove Theorem~\ref{veta_uplne}.

The upper bound $T_\mathrm{w}(K_n) \le 2^{O(n^2\log{n})}$ in Theorem~\ref{veta_uplne_historicka} follows directly from Proposition~\ref{tvrz_1graf-2rot}, since there are at most $(n-2)!$ possible rotations for each vertex, thus at most $((n-2)!)^n = 2^{O(n^2\log{n})}$ possible rotation systems of $K_n$. However, not every set of rotations is realizable as a rotation system of a simple complete topological graph. For example, the rotation of each vertex in a simple complete topological graph is uniquely determined by the set of rotations of the other $n-1$ vertices. This is easily seen by investigating the drawings of $K_4$~\cite{K06_crossings, PT04_how} (see Observation~\ref{obs_8_rotacnich_systemu}) and using the fact that a cyclic permutation of $n$ elements is determined by cyclic subpermutations of all triples. 

The smallest forbidden patterns in the rotation system are the $4$-tuples of cyclic subpermutations of $3$ elements that cannot be realized as rotation systems of a simple drawing of $K_4$. In fact, in Section~\ref{section_combinatorial} we show that it is possible to prove Theorem~\ref{veta_uplne} by combinatorial arguments, using only these simple forbidden patterns.

However, we first show a proof relying more on the topological structure of the drawings, which gives a better upper bound on $T_\mathrm{w}(K_n)$, and also provides an intuition for the purely combinatorial proof.

The core idea in both versions of the proof is to reduce the problem of bounding $T_\mathrm{w}(K_n)$ to counting single permutations with forbidden subpermutations.

\subsection{Permutations with bounded VC-dimension}

Let $S_n$ be the set of all {\em $n$-permutations}, that is, permutations of the set $\{1,2,\dots,$ $n\}$.
The {\em restriction of $\pi \in S_n$ to the $k$-tuple $(a_1, a_2, \dots, a_k)$ of positions\/}, 
where $1\leq a_1<a_2< \dots <a_k\leq n$, is the $k$-permutation 
$\pi'$ satisfying $\forall i,j: \pi'(i) < \pi'(j) \Leftrightarrow \pi(a_i) < \pi(a_j)$. Let $\mathcal P \subseteq S_n$.
The $k$-tuple of positions $(a_1, \dots, a_k)$ is {\em shattered by $\mathcal P$} if each
$k$-permutation appears as a restriction of some $\pi \in \mathcal P$ to $(a_1, \dots, a_k)$.
The {\em VC-dimension of $\mathcal P$} is the size of the largest set of positions shattered by $\mathcal P$. In other words, the VC-dimension of $\mathcal P$ is at most $k$ if for every $k+1$ positions $a_1, \dots, a_{k+1}$ there is some forbidden $(k+1)$-permutation that does not appear as a restriction of any $\pi\in \mathcal P$ to $(a_1, \dots, a_{k+1})$. Raz~\cite{R00_VC} proved that a set of $n$-permutations of VC-dimension $2$ has size at most $2^{O(n)}$.
The following result proved by Cibulka and the author~\cite{CK12_zobecnena_SW} is the key ingredient in the proof of Theorem~\ref{veta_uplne}.

\begin{theorem}\label{veta_vcdimenze}{\rm \cite{CK12_zobecnena_SW}}
For every $t\ge 2$, the size of a set of $n$-permutations with VC-dimension $2t+2$ is at most
$$2^{n \cdot ((2/t!)\alpha(n)^t + O(\alpha(n)^{t-1}) )}.$$
\end{theorem}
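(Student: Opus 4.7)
The characteristic inverse-Ackermann shape of the target bound, with $\alpha(n)^t$ in the exponent, strongly suggests a reduction to the extremal theory of generalized Davenport--Schinzel sequences. Recall that $\lambda_{2t}(n)$, the maximum length of a DS-sequence on $n$ symbols with no immediate repetition and no $(2t+2)$-alternation, is known to satisfy $\lambda_{2t}(n) = n \cdot 2^{(1/t!)\alpha(n)^t + O(\alpha(n)^{t-1})}$ by the bounds of Agarwal--Sharir--Shor and Nivasch; after dividing by $n$ and taking logarithms, this has exactly the numerical shape of the claimed bound on $\log|\mathcal{P}|$. So my plan is to transfer the problem to DS-sequences.

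Concretely, I would try to encode a family $\mathcal{P} \subseteq S_n$ of VC-dimension at most $2t+2$ as a single word $\sigma$ over the alphabet $\{1,\dots,n\}$ with two properties: (i) $|\sigma|$ is at least a linear function (up to lower-order factors) of $n\log|\mathcal{P}|$, so that an upper bound on $|\sigma|$ yields the desired upper bound on $|\mathcal{P}|$; and (ii) every $(2t+3)$-alternation $ababab\cdots$ in $\sigma$ forces the corresponding pair of positions, together with $2t+1$ others produced by the encoding, to be shattered by $\mathcal{P}$, contradicting the VC-dimension hypothesis. A natural first attempt for the encoding is to arrange $\mathcal{P}$ along a spanning path or tree in the Cayley graph of $S_n$ generated by adjacent transpositions and record, at each step, the letters involved in the swap; another possibility is to record, along such a walk, the current value at a carefully chosen pivot position. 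Under either encoding, local swaps generate short alternations, but a long alternation should witness that many positions are freely interchangeable into both orders.

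The hardest step is the encoding itself: one must simultaneously ensure $|\sigma| \gtrsim n \log|\mathcal{P}|$ and that a long alternation produces a \emph{shattered} set of the appropriate size (not just a repeated transposition). I expect this to require an inductive construction that mirrors the standard recursive derivation of $\lambda_{2t}(n)$: partition $\{1,\dots,n\}$ into blocks, apply the induction hypothesis on VC-dimension $2t$ within each block, and then apply the bound one level lower in the Ackermann hierarchy to the sequence of block representatives. The base case ($t=2$, VC-dimension $6$) should be bootstrapped from Raz's $2^{O(n)}$ bound for VC-dimension $2$, with the extra $\alpha(n)^2$ factor coming from the extra degrees of freedom allowed by the larger dimension. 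Matching the explicit constant $2/t!$ in front of $\alpha(n)^t$ is the final piece of bookkeeping: it should fall out of carefully tracking the recursion so that each extra level of the Ackermann hierarchy contributes the right combinatorial factor, exactly as in the standard proof of the $\lambda_{2t}(n)$ upper bound.
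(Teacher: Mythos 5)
This theorem is imported verbatim from the cited reference \cite{CK12_zobecnena_SW} and is \emph{not} proved in the present paper, so there is no in-paper proof to compare against. Assessing your sketch on its own terms: the high-level intuition is sound — the $\alpha(n)^t$ shape does come from generalized Davenport--Schinzel machinery, and the actual proof in \cite{CK12_zobecnena_SW} does rest on a recursive decomposition paralleling the $\lambda_s(n)$ recursion. But what you have written is a plan, not a proof, and the central step is missing.

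The genuine gap is the encoding $\mathcal{P}\mapsto\sigma$. You explicitly say ``the hardest step is the encoding itself'' and then do not construct it. Neither candidate you mention is worked out, and it is not at all clear either would have the required property: in the ``record the swapped pair along a Cayley-graph walk'' encoding, a long $ab$-alternation merely records a pair of symbols being transposed back and forth, which does not obviously produce a shattered $(2t+3)$-tuple of \emph{positions} (shattering requires all $(2t+3)!$ patterns to appear, a far stronger condition than a single repeated transposition). There are also quantitative inconsistencies that you do not resolve. You assert $\lambda_{2t}(n)=n\cdot 2^{(1/t!)\alpha(n)^t+O(\alpha(n)^{t-1})}$, but forbidding $(2t+2)$-alternations corresponds to order $s=2t$, whose growth exponent is $\alpha(n)^{t-1}/(t-1)!$, not $\alpha(n)^t/t!$; and even taking the bound for the correct order, the target constant is $2/t!$ while the DS bound gives $1/t!$, so the claimed ``exact numerical match'' is off by an index and a factor of $2$, and the sketch never explains where either correction would come from. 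Until the encoding is specified concretely, shown to have length $\Theta(n\log|\mathcal{P}|)$, and shown to convert a long alternation into a shattered set of the right cardinality with the correct book-keeping, the argument does not establish the theorem.
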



The upper bound in Theorem~\ref{veta_vcdimenze} is asymptotically almost tight, since there are sets of permutations with VC-dimension $2t+2$ of size $2^{n \cdot ((1/t!)\alpha(n)^t - O(\alpha(n)^{t-1}) }$~\cite{CK12_zobecnena_SW}.

If the forbidden $(k+1)$-permutation is the same for all $(k+1)$-tuples of positions, we get a better, exponential upper bound on the size of $\mathcal P$. This was conjectured by Stanley and Wilf and proved by Marcus and Tardos~\cite{MT04_SW}, using Klazar's earlier result~\cite{K00_FH_SW}.
Later Cibulka~\cite{C09_on_constants} improved Klazar's reduction and obtained the upper bound $2^{O(k \log k)n}$ on the size of $\mathcal P$.

\subsection{Unavoidable topological subgraphs}

A {\em complete convex geometric graph\/} (shortly a {\em convex graph}) is a topological graph whose vertices are in convex position and the edges are drawn as straight-line segments; see Figure~\ref{obr_1}, left. We denote by $C_m$ any complete convex geometric graph with $m$ vertices, as all such graphs belong to the same weak isomorphism class. 

A simple complete topological graph with $m$ vertices is called {\em twisted} and denoted by $T_m$ if there exists a {\em canonical} ordering of its vertices $v_1, v_2, \dots, v_m$ such that for every $i<j$ and $k<l$ two edges $v_iv_j, v_kv_l$ cross if and only if $i<k<l<j$ or $k<i<j<l$; see Figure~\ref{obr_1}, right. Figure~\ref{obr_2} shows an equivalent drawing of $T_m$ on the cylindrical surface.

Let $G$ and $H$ be topological graphs. We say that $G$ {\em contains} $H$ if $G$ has a topological subgraph weakly isomorphic to $H$.

\begin{figure}
\begin{center}
\epsfig{file=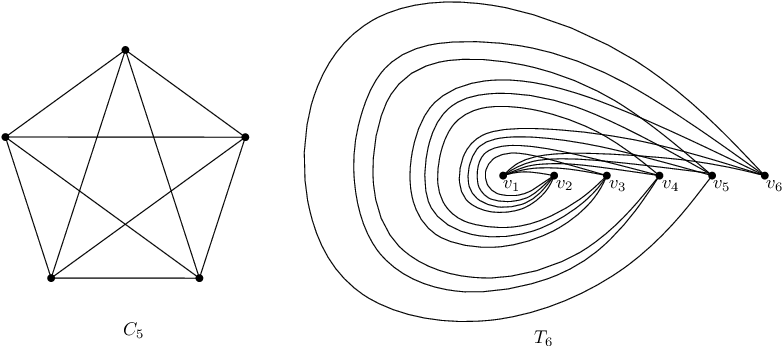, scale=1}
\end{center}
\caption{The convex graph $C_5$ and the twisted graph $T_6$.}
\label{obr_1}
\end{figure}

\begin{figure}
\begin{center}
\epsfbox{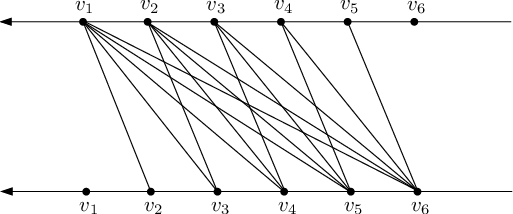}
\end{center}
\caption{A drawing of the twisted graph $T_6$ on the cylindrical surface.}
\label{obr_2}
\end{figure}

We use the following asymmetric form of the Ramsey-type result by Pach, Solymosi and T\'oth~\cite{PST03_unavoidable}, which generalizes the Erd\H{o}s--Szekeres theorem for planar point sets.

\begin{theorem}
\label{theorem_Cm1_or_Tm2}{\rm \cite{PST03_unavoidable}}
For all positive integers $n, m_1, m_2$ satisfying 
$$m_1m_2\le \log_4^{1/4}(n+1),$$ 
every simple complete topological graph with $n$ vertices contains $C_{m_1}$ or $T_{m_2}$. 
\end{theorem}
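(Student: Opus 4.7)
The plan rests on the following combinatorial characterization of $C_m$ and $T_m$: once the vertices are given a linear order, $C_m$ is the drawing in which two disjoint edges cross exactly when their endpoints \emph{interleave} in the order, and $T_m$ is the drawing in which two disjoint edges cross exactly when their endpoints are \emph{nested}. Combined with Proposition~\ref{tvrz_1graf-2rot}, this means it suffices to find a large subset $V'$ of vertices, together with a linear order on $V'$, such that every $4$-vertex sub-drawing of $V'$ agrees with one of these two patterns. Equivalently, every $4$-vertex sub-drawing should belong to the same weak isomorphism class of a simple drawing of $K_4$, namely the ``convex'' or the ``twisted'' one.

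To produce such a homogeneous subset, I would combine one Ramsey step with two iterations of an Erd\H{o}s--Szekeres-type argument. First, I would fix a base vertex $v_0$ and read off its rotation to linearly order the other $n-1$ vertices. Next, I would apply Ramsey's theorem to a suitable $2$-colouring of pairs of non-base vertices (for example, a bit of information extracted from how the edge between them interacts with the edges emanating from $v_0$) to extract a subset $V_1$ of size $\Omega(\log n)$ on which the colour is constant. A first Erd\H{o}s--Szekeres-type refinement on an appropriate colouring of ordered triples inside $V_1$ gives $V_2\subseteq V_1$ of size $\Omega((\log n)^{1/2})$, and a second such refinement on ordered $4$-tuples inside $V_2$ yields $V_3\subseteq V_2$ of size $\Omega((\log n)^{1/4})$ on which every $4$-tuple induces the same weak isomorphism type of simple $K_4$. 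A case analysis of which $4$-vertex types can be realized by \emph{every} $4$-subset of a large linearly ordered set then forces this common type to be convex or twisted, and $V_3$ therefore realizes $C_{|V_3|}$ or $T_{|V_3|}$.

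Running the argument off-diagonally at each extraction step, with the two outcomes (convex-type and twisted-type) counted asymmetrically in proportion to $m_1$ and $m_2$, accounts for the product $m_1m_2$ appearing in the bound instead of $\max(m_1,m_2)^2$. The main obstacle is the final case analysis: one must rule out the possibility that a large ordered $4$-homogeneous subset realizes, for every $4$-tuple, any weak isomorphism type other than the convex or the twisted $K_4$. This uses the finite list of weak isomorphism types of simple $K_4$ together with the fact that the ordering on $V_3$ is inherited from the rotation of $v_0$, so that local combinatorial constraints propagate to every $4$-tuple. The doubly-logarithmic shape of the bound, with exponent $1/4$, reflects the two nested square roots from the two Erd\H{o}s--Szekeres iterations composed with the single logarithm of the initial Ramsey step.
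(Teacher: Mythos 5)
The paper does not actually prove Theorem~\ref{theorem_Cm1_or_Tm2}; it is quoted directly from Pach, Solymosi and T\'oth~\cite{PST03_unavoidable} and used as a black box. The closest argument in the paper is the proof of the combinatorial generalization, Theorem~\ref{veta_unavoidable_kombinatoricka}: fix a base element $1$, normalize its rotation to be the identity, colour each \emph{triple} $i<j<k$ by the sign-type of the induced $4$-element subsystem $\{1,i,j,k\}$, and apply Ramsey. The parity (goodness) condition restricts the possible sign-types to exactly four, and on a monochromatic set each of those four types directly produces $\mathcal{C}_m$ or $\mathcal{T}_m$. That argument gives existence but only with a Ramsey-sized (exponential) bound on $M$, not the polynomial relation $m_1m_2\le \log_4^{1/4}(n+1)$ of the theorem.

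Your sketch has the right general shape (fix a base vertex, order by its rotation, extract a homogeneous subset, read off the pattern), but two steps are genuinely broken. First, your homogenization target is misstated: $C_4$ and $T_4$ are the \emph{same} weak isomorphism class of simple $K_4$ drawings --- indeed there are only two such classes, planar and one-crossing --- so ``the convex or the twisted $K_4$'' is not a well-defined target at the $4$-vertex level. The convex/twisted distinction exists only relative to a fixed linear order, and then there are \emph{four} ordered $4$-tuple crossing types: no crossing, $\{i,j\}\times\{k,l\}$, $\{i,k\}\times\{j,l\}$ (convex), and $\{i,l\}\times\{j,k\}$ (twisted). A set on which every ordered $4$-tuple has the same type does not automatically realize $C_m$ or $T_m$; you still must rule out the first two. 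The planar type dies at five vertices because $K_5$ is nonplanar, but excluding the $\{i,j\}\times\{k,l\}$ type on a large ordered set requires an argument you never give, and it genuinely needs the interaction with the base vertex $v_0$ --- exactly what the paper's proof of Theorem~\ref{veta_unavoidable_kombinatoricka} builds in by colouring triples together with the base element rather than $4$-tuples that omit it. Second, the quantitative side is unsupported: you never say which colourings underlie your two Erd\H{o}s--Szekeres refinements or why they enjoy the transitivity that yields square-root (rather than iterated-logarithmic) losses, plain hypergraph Ramsey on triples and $4$-tuples being far too weak; and the off-diagonal accounting that is supposed to turn $\max(m_1,m_2)^2$ into $m_1m_2$ is asserted without any mechanism. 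As written, the proposal neither establishes the qualitative conclusion (because of the missing case analysis) nor the claimed bound.
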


The graphs $C_m$ and $T_m$ are special cases of simple complete topological graphs with $m$ vertices and ${m \choose 4}$ crossings, which is the maximum number of crossings possible~\cite{HM92_drawings}. The existence of a complete subgraph with $m$ vertices and ${m \choose 4}$ crossings in a sufficiently large simple complete topological graph $G$ follows directly from Ramsey's theorem and the nonplanarity of $K_5$~\cite{HMS95_ramsey_Kn}, but the bound on the size of $G$ obtained is much larger than that from Theorem~\ref{theorem_Cm1_or_Tm2}. For the special case $m_1=m_2=5$, Harborth, Mengersen, and Schelp~\cite{HMS95_ramsey_Kn} showed a much better upper bound than that following from Theorem~\ref{theorem_Cm1_or_Tm2}.

\begin{theorem}
\label{theorem_C5_or_T5}{\rm \cite{HMS95_ramsey_Kn}}
Every simple complete topological graph with $113$ vertices contains $C_5$ or $T_5$.
\end{theorem}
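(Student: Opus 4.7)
The plan is to find five vertices whose induced $K_5$ realizes the maximum possible number of crossings, namely $\binom{5}{4}=5$; such a subdrawing must be weakly isomorphic to either $C_5$ or $T_5$, since a simple $K_4$ admits at most one crossing and so a $K_5$ with five crossings must have exactly one crossing in each of its five $K_4$-subgraphs, and a direct inspection of the short list of weak isomorphism classes of simple drawings of $K_5$ leaves only $C_5$ and $T_5$ attaining this maximum. The target thus reduces to the combinatorial statement: in any simple drawing of $K_{113}$, some 5-element vertex subset has every one of its 4-element subsets inducing a crossed (non-planar) $K_4$.

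First I would label every 4-subset of vertices as \emph{planar} or \emph{crossed}, and for each crossed 4-subset record the unique pair of non-adjacent edges that cross. By Proposition~\ref{tvrz_1graf-2rot}, all of this data is encoded in the rotation system of the drawing, so I can work purely combinatorially with the cyclic orderings at the vertices instead of with the underlying topology. This also makes it possible to identify the weak isomorphism type of each induced $K_4$ and $K_5$ from local information.

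Next I would run an iterated Ramsey-type extraction. Fix a base vertex $v$; its rotation is a cyclic ordering of the remaining 112 vertices. I would partition the triples of these vertices according to which weak isomorphism class of $K_4$ they form together with $v$, and partition the quadruples according to which class of $K_5$ they form together with $v$. Because the number of weak isomorphism classes of simple drawings of $K_4$ and $K_5$ is very small, each pigeonhole step retains a definite fraction of the vertices. Iterating a bounded number of such steps should leave a subset on which all local types are uniform, and on a sufficiently large homogeneous set a direct check using the rigid structure (convex versus twisted behaviour of the rotation at $v$) exhibits five vertices whose induced $K_5$ has all five 4-subsets crossed, hence is $C_5$ or $T_5$.

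The main obstacle is making the bookkeeping tight enough to reach the specific bound $113$. A crude Ramsey argument on the $4$-uniform hypergraph of planar subsets, or an application of Theorem~\ref{theorem_Cm1_or_Tm2} with $m_1=m_2=5$, yields a vastly weaker bound. Reducing to $113$ requires exploiting the explicit short list of weak isomorphism classes of simple drawings of $K_5$ and $K_6$ — the latter controls how 5-vertex subpatterns can be glued through shared 4-subsets — and systematically eliminating most potential homogeneous branches by verifying which local configurations can actually coexist in one simple drawing. This step is an intricate finite case analysis, plausibly carried out with computer assistance, rather than a single clean combinatorial identity.
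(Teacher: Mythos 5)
This theorem is cited, not proved, in the paper: it is a result of Harborth, Mengersen and Schelp~\cite{HMS95_ramsey_Kn}, and the author simply records it here because it improves the constant in Theorem~\ref{theorem_Cm1_or_Tm2} for the special case $m_1=m_2=5$. So there is no ``paper's own proof'' to compare against; the honest answer is that a complete proof of this statement lives in the cited reference, not in the present paper.

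As for your proposal itself: the reduction in your first paragraph is sound — a simple drawing of $K_5$ in which every one of the five $K_4$-subgraphs is crossed has the maximum $\binom{5}{4}=5$ crossings, and such drawings are exactly the convex $C_5$ and the twisted $T_5$ — and the observation that Proposition~\ref{tvrz_1graf-2rot} lets you work entirely with rotation systems is also correct in spirit. But the rest of the proposal does not constitute a proof. You yourself flag the central gap: the iterated pigeonhole/Ramsey extraction you describe, applied naively to $4$-subsets classified as planar or crossed (or worse, to triples or quadruples classified by the weak isomorphism class of the induced $K_4$ or $K_5$), produces a tower-type bound that is astronomically larger than $113$. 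Reaching $113$ is the entire content of the theorem, and no amount of restating ``exploit the short list of $K_5$ and $K_6$ types and do a finite case analysis'' fills that in; it merely names the problem. In particular, you give no mechanism for the crucial efficiency gain in~\cite{HMS95_ramsey_Kn}, which avoids hypergraph Ramsey altogether by arguing directly with constraints on how the planar/crossed status of overlapping $K_4$'s can interact within a single simple drawing. Without exhibiting that argument (or an alternative that actually achieves the numerical bound), the proposal remains a plan, not a proof.
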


\subsection{Forbidden patterns in the rotation system}

Let $G$ be a simple complete topological graph and let $v$ be a vertex of $G$. Our goal is to obtain an upper bound on the number of possible rotations of $v$ in $G$ when the complete subgraph $G-v$ is fixed.
To this end, we need to identify some forbidden permutations in the rotation of $v$.

\begin{lemma}\label{lemma_triangle}
Let $G$ be a simple complete topological graph with vertices $1,2,3,4$. Suppose that the counter-clockwise order of the vertices of the topological triangle $123$ is $1,2,3$. If 
\begin{enumerate}
\item[(a)] the vertex $4$ is outside the triangle $123$ and its rotation is $(1,2,3)$, or
\item[(b)] the vertex $4$ is inside the triangle $123$ and its rotation is $(1,3,2)$, 
\end{enumerate}
then $G$ has no crossings. Otherwise $G$ has one crossing.
\end{lemma}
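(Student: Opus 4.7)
The proof proceeds by a topological case analysis on the position of vertex $4$ (in the interior $D_{\mathrm{in}}$ or exterior $D_{\mathrm{out}}$ of the topological triangle $123$) and its rotation, yielding four cases. Since $G$ is simple, the three edges $12, 23, 13$ share only endpoints, so the triangle $123$ is a simple closed curve; by the Jordan curve theorem it partitions the sphere into two closed disks $D_{\mathrm{in}}$ (bounded) and $D_{\mathrm{out}}$ (unbounded). The given CCW orientation $(1,2,3)$ of the triangle equips $\partial D_{\mathrm{in}}$ with CCW cyclic order $(1,2,3)$ and, by orientation reversal, $\partial D_{\mathrm{out}}$ viewed from $D_{\mathrm{out}}$ with CCW cyclic order $(1,3,2)$. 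Each edge $i4$ shares an endpoint with two of the three triangle edges, so in a simple drawing it can cross only the triangle edge opposite to $i$, and at most once; let $c_i \in \{0,1\}$ record this crossing count.

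The crux of the argument is to show that $c_1 + c_2 + c_3 \le 1$ via an interleaving argument. Assume by symmetry that $c_1 = c_2 = 1$, so $14$ and $24$ meet the triangle at points $X_1$ on edge $23$ and $X_2$ on edge $13$. Let $\Omega$ be the disk not containing $4$: $\Omega = D_{\mathrm{in}}$ if $4 \in D_{\mathrm{out}}$, and $\Omega = D_{\mathrm{out}}$ otherwise. The subarcs of $14$ and $24$ lying in $\Omega$ then connect $X_1$ to $1$ and $X_2$ to $2$, entirely within $\Omega$. A direct computation using the CCW order of $\partial \Omega$ places the four boundary points in a cyclic order in which the pairs $\{X_1,1\}$ and $\{X_2,2\}$ interleave. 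By the Jordan curve theorem applied inside the closed disk $\Omega$, two disjoint simple arcs connecting interleaved pairs of boundary points cannot coexist, so the two subarcs must cross inside $\Omega$, contradicting the fact that $14$ and $24$ share only vertex $4$ in a simple drawing.

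With $c_1 + c_2 + c_3 \in \{0,1\}$ established, it remains to determine the rotation at $4$ in each of the four resulting scenarios (position of $4$ times $\sum c_i \in \{0,1\}$). In each scenario, $4$ together with the initial portion of each of its three incident edges lies in a single closed disk $\Omega \in \{D_{\mathrm{in}}, D_{\mathrm{out}}\}$, and these three initial portions terminate at three distinct boundary points of $\Omega$: either the three triangle vertices (when $\sum c_i = 0$) or two triangle vertices together with the single crossing point $X_i$ (when $\sum c_i = 1$). A standard planarity fact gives that three pairwise disjoint simple arcs from an interior point of a disk to three boundary points induce at the interior endpoint the same cyclic order as the boundary endpoints along $\partial \Omega$; reading off this CCW order in each of the four scenarios directly yields the rotation $(1,2,3)$ in cases (a) and (d), and $(1,3,2)$ in cases (b) and (c).

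The main obstacle is the interleaving step, which requires correctly placing each crossing point $X_i$ in the cyclic order of $\partial \Omega$ according to the inherited orientation, and invoking a disk version of the Jordan curve theorem to force the contradictory intersection. Once this is settled, the rotation identification in each of the four remaining scenarios is a straightforward cyclic-order calculation along the boundary of the appropriate disk.
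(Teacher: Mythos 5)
Your proof is correct and takes a genuinely different route from the paper's. The paper's proof (see Figure~\ref{obr_3} and the surrounding text) appeals to the known finite classification: there are exactly four isomorphism classes of simple drawings of $K_4$, and one simply reads the rotation of vertex $4$ off each picture after normalizing the drawings so that $4$ lies outside the triangle $123$ and then invoking the inside/outside symmetry. This is short but rests on the enumeration of all simple drawings of $K_4$ as an external fact. Your argument instead derives everything from first principles via the Jordan curve theorem: each arc $i4$ can cross only the opposite triangle side and at most once, the interleaving argument in the disk $\Omega$ not containing $4$ shows that at most one crossing can occur, and the remaining four cases (position of $4$ $\times$ crossing count) are resolved by the planarity fact relating the cyclic order of the arcs around an interior point to the cyclic order of their boundary endpoints. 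This is longer but self-contained and does not presuppose the $K_4$ classification; indeed, it reproves (part of) that classification along the way.

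One small point of care in your write-up: the paper defines the rotation as the \emph{clockwise} cyclic order, so after reading off the CCW order of the three boundary endpoints along $\partial\Omega$ you need to reverse it to get the rotation. For example, with $4\in D_{\mathrm{out}}$ and no crossings, the CCW order on $\partial D_{\mathrm{out}}$ is $(1,3,2)$, whose reversal is the CW rotation $(1,2,3)$, matching case (a). Your stated conclusions $(1,2,3)$ and $(1,3,2)$ in the four scenarios are all correct in the paper's CW convention, so this is only an expository gloss, not a gap.
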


\begin{proof}
Figure~\ref{obr_3} shows representatives of all four isomorphism classes of simple complete topological graphs with vertices $1,2,3,4$. The notions of isomorphism and weak isomorphism for these graphs coincide, since in each of the four drawings different pairs of edges cross. Each of the drawings is chosen so that the vertices $1,2,3$ appear in counter-clockwise order in the triangle $123$ and the vertex $4$ is outside the triangle $123$. This still leaves some freedom in choosing the outer face of the drawing: we may always choose any of the three faces adjacent to the vertex $4$, but the rotation system of the drawing stays the same. Since the rotation of the vertex $4$ is $(1,2,3)$ in $H_1$, which is without crossings, and $(1,3,2)$ in $H_2,H_3$ and $H_4$, which have one crossing, the case when the vertex $4$ is in the outer face of $123$ follows. The other case follows by the symmetry exchanging the outer and the inner face of the triangle $123$.
\end{proof}

\begin{figure}
\begin{center}
\epsfbox{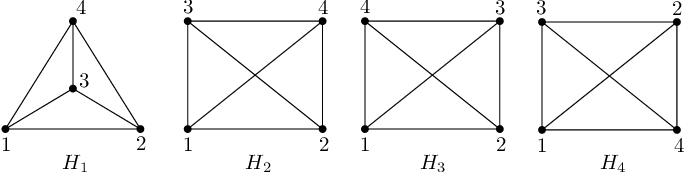}
\end{center}
\caption{Four nonisomorphic simple drawings of $K_4$.}
\label{obr_3}
\end{figure}

\begin{lemma}\label{lemma_C5}
Let $G$ be a simple complete topological graph with vertices $1,2,\dots,6$. Suppose that $G$ contains a convex graph $C_5$ induced by the vertices $1,2,\dots,5$, which appear in this counter-clockwise order on its outer face. Then the rotation of the vertex $6$ is not $(1,4,2,5,3)$. 
\end{lemma}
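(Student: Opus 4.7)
The plan is to suppose for contradiction that the rotation of vertex~$6$ is $(1,4,2,5,3)$, and apply Lemma~\ref{lemma_triangle} to every induced $K_4$ on $\{i,j,k,6\}$ with $\{i,j,k\}\subseteq\{1,\ldots,5\}$. Since in $C_5$ every triangle with vertices $i<j<k$ has counter-clockwise orientation $(i,j,k)$, reading the subrotation of $(1,4,2,5,3)$ at vertex $6$ restricted to each such triple yields a clean dichotomy. For the five \emph{consecutive} triangles $\{1,2,3\},\{1,2,5\},\{1,4,5\},\{2,3,4\},\{3,4,5\}$ (those whose three vertices are pentagon-consecutive), the subrotation equals the CCW orientation, so case~(a) of Lemma~\ref{lemma_triangle} applies and the $K_4$ is crossing-free iff vertex~$6$ lies outside the triangle. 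For the five \emph{non-consecutive} triangles $\{1,2,4\},\{1,3,4\},\{1,3,5\},\{2,3,5\},\{2,4,5\}$ the subrotation is the reverse of the CCW orientation, so case~(b) applies and the $K_4$ is crossing-free iff vertex~$6$ lies inside; a mismatch in either kind contributes exactly one crossing between a $6$-edge and an edge of $C_5$.

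The geometric core of the argument is that the intersection of the interiors of the five non-consecutive triangles in the drawing $C_5$ equals exactly the inner pentagon (the central face of $C_5$, whose entire boundary consists of subsegments of the five diagonals), and the inner pentagon is disjoint from each of the five consecutive triangles. Consequently, the unique face of the planar subdivision of $C_5$ in which vertex~$6$ can sit while making all ten induced $K_4$'s crossing-free is the inner pentagon. But if vertex~$6$ lies in the inner pentagon then the above dichotomy forces no edge $6i$ to cross any edge of $C_5$, while every boundary arc of the inner pentagon is a subsegment of a diagonal; therefore each edge $6i$ would have to cross a diagonal of $C_5$ in order to leave the inner pentagon and reach vertex~$i$. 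This is the main contradiction.

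To finish, I would rule out vertex~$6$ lying in any other face by tracing a single edge (for concreteness, $63$) and combining the $0$-crossing $K_4$'s with the face structure of $C_5$. For example, if vertex~$6$ is in the ear bounded by pentagon side $15$, then the lemma applied to $K_4\{1,2,3,6\},K_4\{1,3,5,6\},K_4\{2,3,4,6\},K_4\{3,4,5,6\}$ (each crossing-free by the dichotomy) restricts $63$ to cross only $14$ or $25$ among the edges of $C_5$; $63$ must then exit the ear across a piece of $14$ or $25$, entering a star-point triangle, from which the only remaining option is to cross the other of $\{14,25\}$ and land in the inner pentagon, where $63$ is trapped because the forbidden exits include $24$ (by $K_4\{2,3,4,6\}$) and the diagonals incident to vertex~$3$. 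The $5$-fold rotational symmetry of $C_5$ reduces the remaining cases to one star-point triangle and the outer face, which are handled by analogous traces. I expect this last step to be the main obstacle, since it requires careful bookkeeping to confirm, in each face, that the restricted set of crossings allowed to $63$ by Lemma~\ref{lemma_triangle} leaves it with no valid routing to vertex~$3$.
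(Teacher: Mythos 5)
Your dichotomy from Lemma~\ref{lemma_triangle} is computed correctly: the restriction of the cyclic permutation $(1,4,2,5,3)$ to a triple $\{i,j,k\}$ agrees with the counter-clockwise orientation $(i,j,k)$ exactly when $\{i,j,k\}$ is pentagon-consecutive, and is reversed otherwise. The identification of the central pentagonal face as the intersection of the five non-consecutive triangles (and disjoint from the five consecutive ones) is right, the contradiction for $v_6$ in the central pentagon is immediate, and the trace you give for the ear at side~$15$ is complete and correct. The gap is your last step: you assert that the star-point faces and the outer face are settled ``by analogous traces,'' but you do not carry them out, and your own hedge (``I expect this last step to be the main obstacle'') confirms the work is unfinished. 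They do in fact go through — for a star-point face the crossing-free induced $K_4$'s trap $63$ in the central pentagon after one crossing of $25$, and for the outer face the five consecutive-triple $K_4$'s force every edge $6i$ to stay outside the pentagon, contradicting the single crossing mandated by (say) triangle~$134$ — but as written those cases are not part of the proof. You should actually execute those traces; the $5$-fold cyclic invariance of $(1,4,2,5,3)$ does legitimately reduce the case analysis to one representative per face type, but each representative must still be checked.

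For comparison, the paper's argument is in the same spirit but considerably more economical. It takes only two cases, inner faces versus the outer face. For the inner faces, after invoking the $5$-fold symmetry it places $v_6$ either in the central pentagon or in the region $(\text{triangle }234)\cap(\text{triangle }134)$ (which is the union of the ear at $34$ and the star point at $3$); all of these lie inside triangle~$134$ and outside triangle~$125$, so two applications of Lemma~\ref{lemma_triangle} (to $134$ and $125$) forbid the edge $61$ from crossing $34$, $13$, $14$, or $25$, and portions of those four edges form a closed curve separating $6$ from $1$. For the outer face, one application to triangle~$125$ pins $61$ outside the pentagon, contradicting the crossing forced by triangle~$134$. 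Your face-by-face trace uses the same ingredients, but the paper's Jordan-curve packaging handles the central pentagon, the ears, and the star points in a single stroke, eliminating exactly the bookkeeping you were worried about.
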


\begin{proof}
Let $H$ be the induced convex graph $G[\{1,2,3,4,5\}]$. Suppose for contradiction that the rotation of the vertex $6$ in $G$ is $(1,4,2,5,3)$. We distinguish two cases according to the face of $H$ in which the vertex $6$ is contained. See Figure~\ref{obr_4}.
\begin{enumerate}

\item [a)] The vertex $6$ is in one of the inner faces of $H$. By symmetry, we may assume that it is either in the inner pentagonal face or in the intersection of the triangles $234$ and $134$. The rotation of the vertex $6$ in $G[\{1,3,4,6\}]$ is $(1,4,3)$. By Lemma~\ref{lemma_triangle} applied to the triangle $134$, the edge $61$ lies completely inside the triangle $134$.
The vertex $6$ is also outside the triangle $125$ and the rotation of $6$ in $G[\{1,2,5,6\}]$ is $(1,2,5)$. By Lemma~\ref{lemma_triangle}, the edges $61$ and $25$ do not cross. But this is a contradiction as the vertices $6$ and $1$ are separated by a closed curve formed by portions of the edges $25,14,43,31$, which the edge $16$ cannot cross.

\item [b)] The vertex $6$ is in the outer face of $H$. By Lemma~\ref{lemma_triangle} applied to the triangle $125$, the edge $61$ cannot cross the edge $25$. Consequently, the edge $61$ crosses no edge of $H$. Similarly, no other edge adjacent to $6$ can cross an edge of $H$. This contradicts the conclusion of Lemma~\ref{lemma_triangle} applied to the triangle $134$. \qedhere
\end{enumerate}
\end{proof}

\begin{figure}
\begin{center}
\epsfbox{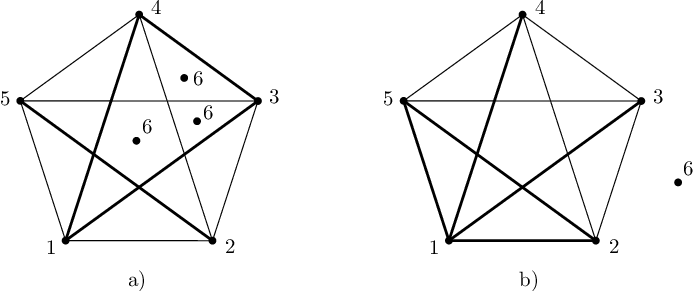} 
\end{center}
\caption{Impossibility of adding a vertex with rotation $(1,4,2,5,3)$. The thick edges cannot be crossed by the edge $61$.}
\label{obr_4}
\end{figure}

\begin{lemma}\label{lemma_C4}
Let $G$ be a simple complete topological graph with vertices $1,2,\dots,5$. Suppose that $G$ contains a convex graph $H$ induced by the vertices $1,2,3,4$, which appear in this counter-clockwise order on its outer face. If the vertex $5$ is inside the triangular face of $H$ adjacent to the vertices $2$ and $3$, then its rotation is not $(1,3,2,4)$.
\end{lemma}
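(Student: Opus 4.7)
The plan is a proof by contradiction in the style of Lemma~\ref{lemma_C5}. I will assume that the rotation of $5$ is $(1,3,2,4)$ and that $5$ lies in the triangular face $F$ of $H$ adjacent to $2$ and $3$, and then apply Lemma~\ref{lemma_triangle} once to each of the four subgraphs $G[\{i,j,k,5\}]$ with $\{i,j,k\}\subset\{1,2,3,4\}$ in order to conclude that every one of them is crossing-free. The contradiction will come from the observation that the edge $54$ (or, symmetrically, $51$) then has no way to escape $F$ and reach its far endpoint.

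For the first step I read off the four $3$-element restrictions of the rotation $(1,3,2,4)$: they are $(1,3,2)$, $(1,2,4)$, $(1,3,4)$ and $(3,2,4)$. Combined with the facts (immediate from the convex layout of $H$) that the CCW orders of the four sub-triangles are $1,2,3$, $1,2,4$, $1,3,4$, $2,3,4$ respectively, and that the face $F$ lies inside the triangles $123$ and $234$ but outside the triangles $124$ and $134$, the hypotheses of Lemma~\ref{lemma_triangle} are satisfied in the crossing-free cases (cases (b), (a), (a), (b) respectively). Hence each of $G[\{1,2,3,5\}]$, $G[\{1,2,4,5\}]$, $G[\{1,3,4,5\}]$, $G[\{2,3,4,5\}]$ is crossing-free.

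For the contradiction I look at the edge $54$. Let $X=13\cap 24$ be the unique crossing of $H$. Since $5\in F$ and $4\notin F$, the edge $54$ must cross the topological boundary of $F$, which consists of the edge $23$ together with the two segments $2X\subset 24$ and $X3\subset 13$ meeting at $X$. But $54$ cannot cross $24$ (shared endpoint $4$), nor $23$ (by the crossing-free conclusion on $G[\{2,3,4,5\}]$), nor $13$ (by the same on $G[\{1,3,4,5\}]$). This is impossible, proving the lemma.

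The only even mildly delicate point is the orientation bookkeeping in the middle paragraph, where the convex arrangement of $H$ is needed to verify that the CCW orders of the sub-triangles and the inside/outside status of $F$ relative to each sub-triangle are exactly the ones required to invoke the crossing-free cases of Lemma~\ref{lemma_triangle}. Everything else parallels the argument already given for Lemma~\ref{lemma_C5}.
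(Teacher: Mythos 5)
Your proof is correct and follows essentially the same route as the paper: apply Lemma~\ref{lemma_triangle} to deduce that the relevant induced $K_4$'s are crossing-free, then observe that the edge $54$ is trapped inside the triangular face $F$ because it cannot cross any portion of $\partial F$. The paper is slightly more economical, invoking Lemma~\ref{lemma_triangle} only for the triangles $234$ and $134$ (your analyses of $123$ and $124$ are correct but unused in the final contradiction), and it obtains non-crossing of $54$ with $24$ from the crossing-free $K_4$ rather than from the shared endpoint, but these are cosmetic differences.
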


\begin{figure}
\begin{center}
\epsfbox{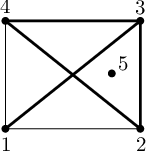} 
\end{center}
\caption{Impossibility of adding a vertex with rotation $(1,3,2,4)$ to the triangular face adjacent to the vertices $2$ and $3$. The thick edges cannot be crossed by the edge $54$.}
\label{obr_5}
\end{figure}

\begin{proof}
See Figure~\ref{obr_5}. Suppose for contradiction that the vertex $5$ is inside the triangular face of $H$ adjacent to the vertices $2$ and $3$ and its rotation in $G$ is $(1,3,2,4)$. By Lemma~\ref{lemma_triangle} applied to the triangles $234$ and $134$, the edge $54$ does not cross the edges $13,23,34$ and $24$. But portions of these edges form a closed curve separating the vertices $4$ and $5$, a contradiction.
\end{proof}

\begin{lemma}\label{lemma_T6}
Let $G$ be a simple complete topological graph with vertices $1,2,\dots,7$. Suppose that $G$ contains a twisted graph $T_6$ induced by the vertices $1,2,\dots,6$, in this canonical order, and with the orientation where the rotation of the vertex $6$ is $(1,2,3,4,5)$. Then the rotation of the vertex $7$ is not $(3,2,1,4,5,6)$.
\end{lemma}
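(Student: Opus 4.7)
The plan is to follow the strategy of Lemmas~\ref{lemma_C5} and~\ref{lemma_C4}: assume for contradiction that vertex $7$ can be added to $T_6$ with rotation $(3,2,1,4,5,6)$, and derive a contradiction by exhibiting some edge $7v$ that is forced to avoid crossing a closed curve which actually separates $v$ from $7$.

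First I would pin down the rotation system of $T_6$. The specified crossing pattern of $T_6$, combined with the hypothesis that vertex $6$ has rotation $(1,2,3,4,5)$, determines the rotation at every other vertex (via Proposition~\ref{tvrz_1graf-2rot}), and hence also the counterclockwise cyclic order on the boundary of each of the $\binom{6}{3}=20$ topological triangles in $T_6$. The cylindrical representation in Figure~\ref{obr_2} gives a convenient visualization of the face structure. Next, for each triple $\{i,j,k\} \subseteq \{1,\dots,6\}$ I would restrict the proposed rotation $(3,2,1,4,5,6)$ to $\{i,j,k\}$, compare it with the counterclockwise order of the triangle $ijk$, and apply Lemma~\ref{lemma_triangle}: depending on which face of $ijk$ contains vertex $7$, this determines whether any of the edges $7i,7j,7k$ cross the corresponding opposite edges $jk,ik,ij$. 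The restrictions of $(3,2,1,4,5,6)$ behave asymmetrically on the triples $\{1,2,3\}$ and $\{4,5,6\}$, which have opposite orientations relative to the canonical order, while on most mixed triples they agree with the canonical order; this asymmetry will be used to localize vertex $7$ simultaneously inside one face of the "front" triangle $123$ and inside a constrained face of the "back" triangle $456$.

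To finish, I would pick a vertex $v \in \{1,\dots,6\}$ and identify a closed curve $\gamma$ made of portions of a small number of edges of $T_6$ such that the non-crossing constraints derived in the previous step forbid the edge $7v$ from crossing any arc of $\gamma$, yet $\gamma$ separates $7$ from $v$ in the sphere — a contradiction. The main obstacle is combinatorial: since $T_6$ realizes the maximum possible $\binom{6}{4}=15$ crossings, its face structure is far more intricate than that of $C_4$ or $C_5$, and selecting the right triangles whose forbidden crossings assemble into a useful separating curve requires careful case analysis. I expect the decisive triangles to be $\{1,2,3\}$ and $\{4,5,6\}$ together with a mixed triangle such as $\{1,3,4\}$ or $\{3,4,6\}$, where the reversed subsequence $(3,2,1)$ in the rotation of $7$ clashes most sharply with the ascending pattern imposed by the canonical order of $T_6$.
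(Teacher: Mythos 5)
Your proposal correctly sets the general framework (work by contradiction, restrict the assumed rotation to subgraphs, use Lemma~\ref{lemma_triangle}, produce a separating curve), but it stops short of the key structural observation that makes the proof go through, so as written it is a plan rather than a proof. The paper's argument is actually shorter and sharper than what you are envisaging: instead of chasing down individual triangles of $T_6$ and assembling a separating curve by hand, it notices that $G_1 = G[\{1,2,3,4\}]$ and $G_2 = G[\{3,4,5,6\}]$ are each weakly isomorphic to $C_4$ (with outer 4-cycles $1243$ and $3465$), and that the triangular face adjacent to vertices $3$ and $4$ in $G_1$ together with the triangular face adjacent to vertices $3$ and $4$ in $G_2$ already cover the whole plane. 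Hence vertex $7$ lies in one of those two faces, and the restricted rotations of $7$ (namely $(1,4,3,2)$ in $G[\{1,2,3,4,7\}]$ and $(3,4,5,6)$ in $G[\{3,4,5,6,7\}]$) directly contradict Lemma~\ref{lemma_C4}. Your sketch instead flags $\{1,2,3\}$, $\{4,5,6\}$, and a mixed triple, applies Lemma~\ref{lemma_triangle} directly rather than routing through Lemma~\ref{lemma_C4}, and defers the decisive combinatorics to "careful case analysis" that is never carried out; in particular no concrete separating curve $\gamma$ is exhibited. The missing idea is precisely this localization: you need a pair of faces (coming from two copies of $C_4$ sharing the edge $34$) whose union is the whole plane, so that vertex $7$ has nowhere consistent to land. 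Until that or some equivalent localization is supplied, the argument has a genuine gap.
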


\begin{figure}
\begin{center}
\epsfbox{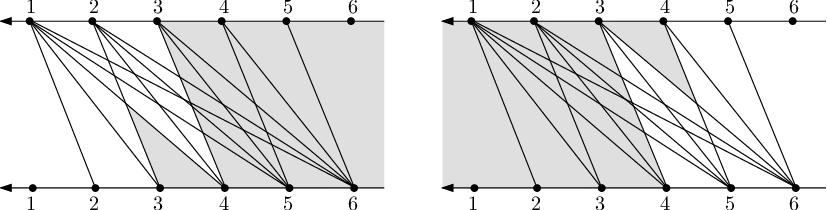} 
\end{center}
\caption{Impossibility of adding a vertex with rotation $(3,2,1,4,5,6)$ to the twisted graph $T_6$. The grey area represents the triangular face adjacent to the vertices $3$ and $4$ in the subgraphs induced by the vertices $1,2,3,4$ (left) and $3,4,5,6$ (right).}
\label{obr_6}
\end{figure}

\begin{proof}
Suppose for contradiction that the rotation of the vertex $7$ is $(3,2,1,4,5,6)$.
The subgraphs $G_1=G[\{1,2,3,4\}]$ and $G_2=G[\{3,4,5,6\}]$ are both isomorphic to the convex graph $C_4$. The $4$-cycles corresponding to the outer face of $C_4$ are $1243$ and $3465$, respectively. The two triangular faces adjacent to the vertices $3$ and $4$ in $G_1$ and $G_2$ cover the whole plane; see Figure~\ref{obr_6}. It follows that at least one of these two faces contains the vertex $7$. The rotation of the vertex $7$ is $(1,4,3,2)$ in $G[1,2,3,4,7]$ and $(3,4,5,6)$ in $G[3,4,5,6,7]$, which contradicts Lemma~\ref{lemma_C4}.
\end{proof}

\subsection{Proof of Theorem~\ref{veta_uplne}}\label{sub_dukaz_vety_uplne}
Now we finish the proof of Theorem~\ref{veta_uplne} by combining previous results from this section.
Let $g(n)$ be the number of different rotation systems of simple complete topological graphs with $n$ vertices. By Proposition~\ref{tvrz_1graf-2rot}, we have $T_w(n)\le g(n)$. We show an upper bound on $g(n)$ by induction.

Let $N=4^{30^4}$. Assume that $n\ge 2N$, otherwise $g(n)\le g(2N)$, which is a constant. We may also assume for simplicity that $n=2^k$ where $k$ is a positive integer.

Let $G$ be a simple complete topological graph with vertices $v_1,v_2,\dots,v_n$. 
Let $G_1$ be the subgraph of $G$ induced by the vertices $v_1,\dots,v_{n/2}$ and let $G_2$ be the subgraph of $G$ induced by the vertices $v_{n/2+1},\dots,v_n$. Fix a rotation system $\mathcal{R}_1$ for $G_1$ and $\mathcal{R}_2$ for $G_2$. Choose an arbitrary drawing of $G_1$ with the rotation system $R_1$.
Let $v_i$ be a vertex of $G_2$. We show an upper bound on the number of possible rotations of $v_i$ in the subgraph $G_1^i$ of $G$ induced by $V(G_1)\cup\{v_i\}$.

By Theorem~\ref{theorem_Cm1_or_Tm2}, every simple complete topological graph with $N$ vertices contains $C_5$ or $T_6$. Therefore, every induced subgraph of $G_1$ with $N$ vertices contains a subgraph $H$ weakly isomorphic to $C_5$ or $T_6$. By Lemma~\ref{lemma_C5} or~\ref{lemma_T6}, one of the cyclic permutations of the vertices of $H$ is forbidden in the rotation of $v_i$. Note that Lemmas~\ref{lemma_C5} and~\ref{lemma_T6} can be applied regardless of the particular way how $H$ is drawn.
Consequently, for each $N$-tuple of vertices in $G_1$, a non-empty subset of their cyclic permutations is forbidden in the rotation of $v_i$. 

Let $\mathcal{R}_1^i$ denote the set of all possible rotations of $v_i$ in $G_1^i$. 
To pass from cyclic permutations to linear permutations, we arbitrarily select a first element in each cyclic permutation from $\mathcal{R}_1^i$ and denote the resulting set of permutations as $\mathcal{P}_1^i$. For each forbidden cyclic permutation $\rho$ of $N$ elements, the permutations from $\mathcal{P}_1^i$ avoid all $N$ linear permutations obtained from $\rho$. In particular, the VC-dimension of the set $\{\pi^{-1}; \pi\in\mathcal{P}_1^i\}$ is at most $N-1$. Let $f(m)$ be the maximum possible size of a set of $m$-permutations with VC-dimension $N-1$. By Theorem~\ref{veta_vcdimenze}, 
$$|\mathcal{R}_1^i|=|\mathcal{P}_1^i|\le f(n/2)\le 2^{(n/2) \cdot ((2/t!)\alpha(n/2)^t + O(\alpha(n/2)^{t-1}) )},$$
where $t=(N-2)/2$.
For every $i>n/2$, the rotation of $v_i$ in $G$ is uniquely determined by the rotation $\pi_i$ of $v_i$ in $G_1^i$, the rotation $\pi'_i$ of $v_i$ in $G_2$ and by one of the $\frac{(n/2)(n/2-1)}{n-1}{n-1 \choose n/2}\le n2^n$ ways of merging $\pi_i$ and $\pi'_i$ together. For $i\le n/2$, the situation is symmetric.

It follows that the number of all possible rotation systems of $G$ with $\mathcal{R}_1$ and $\mathcal{R}_2$ fixed is at most 
\begin{align*}
(f(n/2)\cdot n2^n)^n &\le n^n\cdot 2^{n^2}\cdot 2^{(n^2/2) \cdot ((2/t!)\alpha(n/2)^t + O(\alpha(n/2)^{t-1}) )}\\
&\le 2^{c(n^2/2) \cdot \alpha(n)^t}, 
\end{align*}
where $c$ is an absolute constant.
Since there are $g(n/2)$ possibilities for each of the rotation systems $\mathcal{R}_1$ and $\mathcal{R}_2$, we have
\begin{align*}
g(n)&\le (g(n/2))^2\cdot 2^{c(n^2/2) \cdot \alpha(n)^t}\\
&\le g(2N)^{n}\cdot 2^{c(n^2/2 + 2(n/2)^2/2 + 4(n/4)^2/2 + \cdots) \cdot \alpha(n)^t}\\
&\le g(2N)^{n}\cdot 2^{c(n^2) \cdot \alpha(n)^t} = 2^{O(n^2 \cdot \alpha(n)^t)}.
\end{align*}

\subsection{Combinatorial generalization of Theorem~\ref{veta_uplne}} \label{section_combinatorial}

Here we generalize Theorem~\ref{veta_uplne} to a purely combinatorial statement involving $n$-tuples of cyclic permutations. The aim is to estimate the number of possible rotation systems of a simple complete topological graph using as little topological information as possible. In particular, the only condition we need comes from drawings of complete graphs with $4$ vertices.

\begin{observation}{\rm \cite{K09_enumeration,PT04_how}}\label{obs_8_rotacnich_systemu}
The eight rotation systems listed in Table~\ref{rotacni_systemy_K4} are the only possible rotation systems of a simple complete topological graph with vertices $1,2,3,4$. 
\end{observation}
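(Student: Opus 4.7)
My plan is to combine Lemma~\ref{lemma_triangle} with Proposition~\ref{tvrz_1graf-2rot} (and its converse noted after Figure~\ref{obr_0_C4_dvema_zpusoby}) to reduce the statement to a short counting argument. The rotation at each vertex of $K_4$ is a cyclic permutation of three elements, of which there are exactly two. This gives an a priori upper bound of $2^{4}=16$ rotation systems on the labeled vertex set $\{1,2,3,4\}$, and the task is to identify the $8$ that are realizable.

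First I would enumerate the weak isomorphism classes of simple drawings of $K_4$. By Lemma~\ref{lemma_triangle}, every such drawing has either $0$ or $1$ crossing. Since $K_4$ has exactly three pairs of independent edges, this yields at most four weak isomorphism classes: the planar drawing plus one drawing per choice of crossing pair. Figure~\ref{obr_3} exhibits representatives of all four, so precisely four weak isomorphism classes occur.

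Next I would translate this to rotation systems. By Proposition~\ref{tvrz_1graf-2rot} together with its converse, two simple complete topological graphs are weakly isomorphic if and only if their rotation systems agree or are obtained from one another by reversing every cyclic order. A reflection of the drawing through a great circle of the sphere witnesses the reversed rotation system, so both members of each such pair are actually realized. A cyclic permutation of three elements is never its own reverse, so the two rotation systems of each weak isomorphism class are distinct. Consequently each of the four weak isomorphism classes supplies exactly two rotation systems, giving $4\cdot 2=8$ in total. A direct comparison with Table~\ref{rotacni_systemy_K4} confirms that these are the listed ones.

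I do not foresee any substantive obstacle: Lemma~\ref{lemma_triangle} already pins down the geometry of $K_4$ tightly enough that the remaining content is a short combinatorial tally, and nothing in the argument requires additional topological input beyond the sphere reflection used to pair each drawing with its mirror image.
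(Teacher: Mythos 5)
The paper states Observation~\ref{obs_8_rotacnich_systemu} without proof, citing \cite{K09_enumeration,PT04_how}, so there is no in-text argument to compare against; the question is just whether your derivation is sound. It is, and it is a reasonable way to package the count. You bound the weak isomorphism classes of simple drawings of $K_4$ at four (Lemma~\ref{lemma_triangle} gives at most one crossing; the three independent pairs plus the crossing-free case give at most four crossing sets; Figure~\ref{obr_3} realizes all four), then lift to rotation systems using Proposition~\ref{tvrz_1graf-2rot} together with the converse stated after Figure~\ref{obr_0_C4_dvema_zpusoby}: each weak isomorphism class corresponds to exactly the pair $\{\mathcal{R},\bar{\mathcal{R}}\}$, and since a cyclic permutation of three elements is never its own reverse, this pair has size two. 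That gives $4\cdot 2 = 8$, matching Table~\ref{rotacni_systemy_K4}.

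One caveat worth flagging: the logical support is thinner than it appears. Lemma~\ref{lemma_triangle}, which you invoke for the "at most one crossing" step, is itself proved in the paper by appealing to Figure~\ref{obr_3} as a complete list of the four isomorphism classes of simple drawings of $K_4$ --- which is essentially the content of Observation~\ref{obs_8_rotacnich_systemu} restated. So your argument does not replace the base enumeration; it just repackages it. This is fine within the paper's logical order (Lemma~\ref{lemma_triangle} precedes the Observation), and the underlying fact is elementary, but a proof genuinely independent of the cited works would need to first establish from scratch that a simple drawing of $K_4$ has at most one crossing (e.g., by analyzing the Jordan curve $123$ and the three edges at vertex $4$), rather than reading it off a figure whose completeness is what is ultimately being asserted. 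With that understood, your calculation $4\cdot 2 = 8$ and the appeal to the mirror image to realize the inverse rotation system are correct, and the direct check against Table~\ref{rotacni_systemy_K4} completes the argument.
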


\begin{table}
\begin{center}
\begin{tabular}{c|c}  
graph & rotation system \\
\hline
$H_1$ & $((2,4,3),(1,3,4),(1,4,2),(1,2,3))$ \\
$H^R_2$ & $((2,4,3),(1,4,3),(1,2,4),(1,2,3))$ \\
$H^R_3$ & $((2,3,4),(1,3,4),(1,2,4),(1,2,3))$ \\
$H^R_4$ & $((2,3,4),(1,4,3),(1,4,2),(1,2,3))$ \\
$H^R_1$ & $((2,3,4),(1,4,3),(1,2,4),(1,3,2))$ \\
$H_2$ & $((2,3,4),(1,3,4),(1,4,2),(1,3,2))$ \\
$H_3$ & $((2,4,3),(1,4,3),(1,4,2),(1,3,2))$ \\
$H_4$ & $((2,4,3),(1,3,4),(1,2,4),(1,3,2))$ \\
\end{tabular}
\caption{The eight possible rotation systems of a simple complete topological graph with $4$ vertices. The labels refer to the drawings in Figure~\ref{obr_3}, where $H^R_i$ denotes the mirror image of $H_i$.}
\label{rotacni_systemy_K4}
\end{center}
\end{table}

The eight rotation systems from Observation~\ref{obs_8_rotacnich_systemu} can be characterized by the following {\em parity condition}. Let $l\in\{1,2,3,4\}$ and $\{i,j,k\}=\{1,2,3,4\}\setminus\{l\}$, with $i=\min\{i,j,k\}$. We call the rotation $(i,j,k)$ at $l$ {\em positive\/} if $j<k$ and {\em negative\/} if $k<j$. A $4$-tuple of rotations at vertices $1,2,3,4$ forms a rotation system of a simple complete topological graph with the vertices $1,2,3,4$ if and only if the number of negative rotations is even.
Note that this characterization does not depend on the particular linear ordering of the vertices.

An {\em abstract rotation system\/} $\mathcal{R}$ on a set $V=\{v_1,\dots,v_n\}$ is an $n$-tuple of cyclic $(n-1)$-permutations $\pi_{v_1}, \dots, \pi_{v_n}$ where the set of elements of $\pi_{v_i}$ is $V\setminus\{v_i\}$. A {\em subsystem of $\mathcal{R}$ induced\/} by a subset $W=\{w_1,\dots,w_k\}\subset V$, denoted by $\mathcal{R}[W]$, is a $|W|$-tuple of cyclic permutations $\rho_{w_1},\dots,\rho_{w_k}$ where $\rho_{w_i}$ is a restriction of $\pi_{w_i}$ to the subset $W\setminus\{w_i\}$.

An abstract rotation system is {\em realizable\/} if it is a rotation system of a simple complete topological graph. Realizable rotation systems on a set $W$ of size $4$ are precisely those satisfying the parity condition for some linear ordering of $W$.
An abstract rotation system $\mathcal{R}$ is {\em good\/} if every subsystem of $\mathcal{R}$ induced by a $4$-element subset is realizable.

We prove the following theorem, generalizing Theorem~\ref{veta_uplne}.

\begin{theorem}\label{veta_kombinatoricka}
The number of good abstract rotation systems on an $n$-element set is at most
$$2^{n^2\cdot \alpha(n)^{O(1)}}.$$
\end{theorem}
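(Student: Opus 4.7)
The strategy is to mimic the induction in Section~\ref{sub_dukaz_vety_uplne}, replacing every appeal to a topological fact by an argument that uses only the parity condition on induced $4$-element subsystems. The scaffolding of that proof — halve the vertex set, count rotation systems on each half recursively, and for each vertex in one half apply Theorem~\ref{veta_vcdimenze} to bound the number of ways its rotation can interleave with the other half — is already purely combinatorial, so it transfers without change once the required forbidden patterns are established in the abstract setting.

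First I would make the target configurations combinatorial. For an abstract rotation system $\mathcal{R}$, say that $\mathcal{R}$ \emph{contains a combinatorial $C_m$} (resp.\ $T_m$) if some $m$-element $W\subseteq V$ induces a subsystem $\mathcal{R}[W]$ that agrees, after relabeling, with the rotation system of the convex graph $C_m$ (resp.\ the twisted graph $T_m$). Observation~\ref{obs_8_rotacnich_systemu} together with the parity characterization makes Lemma~\ref{lemma_triangle} into a purely combinatorial statement about $4$-element subsystems: the cyclic rotation of the fourth vertex relative to an oriented triangle is equivalent data to the parity of the corresponding $K_4$ rotation system. This gives a combinatorial surrogate for the topological dichotomy ``vertex $4$ is inside/outside the triangle $123$''.

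Next, I would reprove the forbidden-pattern Lemmas~\ref{lemma_C5}, \ref{lemma_C4}, and \ref{lemma_T6} in this language. Each topological step in those proofs — ``the edge $xy$ cannot cross the edge $zw$ because a certain closed curve separates $x$ from $y$'' — is really the assertion that the induced rotation system on $\{x,y,z,w\}$ is one specific row of Table~\ref{rotacni_systemy_K4}, and that specific row is forced by parities already pinned down on overlapping $4$-subsets of the configuration. I would trace through the original case analyses and record, for each topological deduction, the $4$-subset whose parity supplies the same information; the contradictions in the topological proofs then reappear as parity contradictions, yielding versions of the three lemmas that are valid for any good abstract rotation system.

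The main obstacle is the combinatorial analog of Theorem~\ref{theorem_Cm1_or_Tm2}: that every good abstract rotation system on $N=4^{30^4}$ elements contains a combinatorial $C_5$ or $T_6$. I would revisit the proof of Pach, Solymosi, and T\'oth~\cite{PST03_unavoidable}, which is essentially an iterated Erd\H{o}s--Szekeres argument tracking relative positions of crossings in simple complete topological graphs; the hope is that ``crossing'' there can be replaced throughout by the combinatorial crossing relation defined on each $4$-element subsystem via the parity condition, since that relation is the only attribute of the drawing their argument ever uses. If the substitution goes through, the Ramsey-type existence transfers, the induction from Section~\ref{sub_dukaz_vety_uplne} runs verbatim with $g(n)$ replaced by the count of good abstract rotation systems on $n$ elements, and Theorem~\ref{veta_kombinatoricka} follows with the same $2^{n^2\cdot\alpha(n)^{O(1)}}$ bound.
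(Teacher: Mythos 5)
Your high-level scaffolding (halving induction, VC-dimension bound from Theorem~\ref{veta_vcdimenze}, Ramsey-type unavoidability plus forbidden-pattern lemmas) matches the paper, and the idea of defining combinatorial $\mathcal{C}_m$ and $\mathcal{T}_m$ via relabelings of the relevant rotation systems is exactly what the paper does. But there are two substantive divergences, and one of them is a genuine gap in your plan.

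The gap is in your claim that the topological steps in Lemmas~\ref{lemma_C5}, \ref{lemma_C4}, \ref{lemma_T6} ``reappear as parity contradictions'' once each ``edge $xy$ cannot cross $zw$'' deduction is relabeled as a row of Table~\ref{rotacni_systemy_K4}. Those proofs rest on Jordan-curve separation arguments (``portions of the edges $25,14,43,31$ form a closed curve separating $6$ from $1$''), which are statements about five or more points simultaneously, not data recorded on individual $4$-element subsystems. The paper itself exhibits good-but-unrealizable abstract rotation systems on only five elements ($\mathcal{R}^5_1$, $\mathcal{R}^5_2$): the parity condition on all $4$-subsets holds, yet the topological constraint fails. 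This is direct evidence that $4$-subset parities alone do not encode the separation arguments you are leaning on, at least not at the scale of $\mathcal{C}_5$ or $\mathcal{T}_6$. The paper's fix is to prove forbidden-pattern lemmas for much larger unavoidable configurations ($\mathcal{C}_7$ in Lemma~\ref{lemma_abstract_C7} and $\mathcal{T}_{815}$ in Lemma~\ref{lemma_abstract_T816}), where parity constraints from many overlapping $4$-subsets accumulate enough to force a contradiction; especially the $\mathcal{T}_{815}$ argument is a lengthy double-counting analysis with no topological analogue. Your proposal would need to either prove the combinatorial lemmas at the $\mathcal{C}_5$/$\mathcal{T}_6$ scale directly (which may well be false, and is in any case unproven) or notice, as the paper does, that larger configurations are required and change the target accordingly.

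The second divergence is more benign. You propose to port the Pach--Solymosi--T\'oth argument~\cite{PST03_unavoidable} to the abstract setting to get an unavoidability theorem. The paper instead proves Theorem~\ref{veta_unavoidable_kombinatoricka} from scratch: it assigns to every triple a $3$-bit ``type'' read off from the parities of the rotations in $\mathcal{R}[\{1,i,j,k\}]$, applies ordinary Ramsey to get a monochromatic clique of triples, and checks that the two possible types directly yield $\mathcal{C}_m$ or $\mathcal{T}_m$. This is much shorter and sidesteps the question of whether the PST argument really only uses the abstract crossing relation. You should adopt this (or prove from scratch that PST transfers, which you currently flag only as a hope).
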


We do not know whether the upper bound in Theorem~\ref{veta_kombinatoricka} is asymptotically tight. The best lower bound $2^{\Omega(n^2)}$ on the number of good abstract rotation systems comes from Theorem~\ref{veta_uplne_historicka}.

\begin{problem}\label{problem_good}
Is it true that the number of good abstract rotation systems on an $n$-element set is bounded by $2^{O(n^2)}$?
\end{problem}

We note that the asymptotic number of abstract rotation systems may vary significantly if a different pattern of the same size is forbidden. There are $16$ possible abstract rotation systems on every $4$-element set. If the forbidden pattern consists of a different set of eight abstract rotation systems, we may obtain $2^{\Omega(n^2\log n)}$ abstract rotation systems on $n$ elements satisfying this restriction.
For example, consider the set $\mathcal{A}$ of all abstract rotation systems on the set $\{1,2,\dots,n\}$ where in every induced subsystem on four elements $i<j<k<l$, we forbid the eight abstract rotation systems with rotation $(j,l,k)$ at $i$. The following construction shows that the size of $\mathcal{A}$ is $2^{\Omega(n^2\log n)}$. 
Consider an abstract rotation system $\mathcal{R}=(\pi_1,\pi_2,\dots,\pi_n)$ where $\pi_i(j) \in \{1,\dots,i-1\}$ for $j \le i-1$ and $\pi_i(j)=j+1$ for $j\ge i$. Clearly, the rotation at $i$ in every subsystem of $\mathcal{R}$ induced by four elements $i<j<k<l$ is $(j,k,l)$. The number of such abstract rotation systems is $\prod_{i=1}^n (i-1)!=2^{\Omega(n^2\log n)}$.

Good abstract rotation systems do not characterize realizable abstract rotation systems completely. For example, the following two good abstract rotation systems on five elements are not realizable:

\begin{align*}
\mathcal{R}^5_1 &= ((2,5,3,4),(1,3,4,5),(1,2,5,4),(1,2,5,3),(1,3,4,2)), \\
\mathcal{R}^5_2 &= ((2,3,5,4),(1,3,4,5),(1,5,2,4),(1,2,5,3),(1,4,3,2)).
\end{align*}
It is straightforward to check that both $\mathcal{R}^5_1$ and $\mathcal{R}^5_2$ are good. Suppose that these systems are realizable. In both cases, in the subgraph $H$ induced by the vertices $1,2,3,4$, the edges $13$ and $24$ cross. Fix a drawing of $H$ as a convex graph with the vertices $1,2,3,4$ on the outer face in clockwise order; see Figure~\ref{obr_7}. In both cases, the orientations of triangles and the rotations of vertices imply, by Lemma~\ref{lemma_triangle}, that the vertex $5$ must lie inside the triangles $132$ and $143$. But this is impossible as the two triangles have disjoint interiors.

\begin{figure}
\begin{center}
\epsfbox{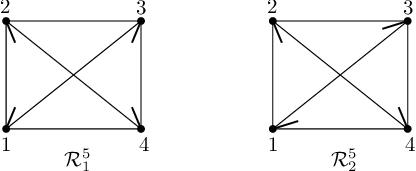} 
\end{center}
\caption{Partial realizations of the good abstract rotation systems $\mathcal{R}^5_1$ and $\mathcal{R}^5_2$. Thick segments represent the portions of the edges incident with the vertex $5$.}
\label{obr_7}
\end{figure}

While it is likely that there is no finite characterization of realizable abstract rotation systems by a finite list of forbidden subsystems, it is known that realizable abstract rotation systems can be recognized in polynomial time~\cite{K10_simple_real}.

To prove Theorem~\ref{veta_kombinatoricka}, we proceed in the same way as in the proof of Theorem~\ref{veta_uplne}, but we need to replace Theorem~\ref{theorem_Cm1_or_Tm2}, Lemma~\ref{lemma_C5} and Lemma~\ref{lemma_T6} by combinatorial analogues.

An abstract rotation system on $n$ elements is called {\em convex\/} and denoted by $\mathcal{C}_n$ if the elements can be ordered in a sequence $v_1,v_2,\dots,v_n$ so that
the rotation at $v_i$ is $(v_1,v_2\dots,v_{i-1}$, $v_{1+1}$, $v_{i+2},\dots,v_n)$. An abstract rotation system on $n$ elements is called {\em twisted\/} and denoted by $\mathcal{T}_n$ if the elements can be ordered in a sequence $v_1,v_2,\dots,v_n$ so that the rotation at $v_i$ is $(v_{i-1},\dots,v_2$, $v_{1}$, $v_{1+1}$, $v_{i+2}\dots,v_n)$. Note that $\mathcal{C}_n$ is a rotation system of the convex graph $C_n$ and $\mathcal{T}_n$ is a rotation system of the twisted graph $T_n$.

Two abstract rotation systems are {\em isomorphic\/} if they differ only by relabeling of their ground set. An abstract rotation system $\mathcal{R}$ {\em contains\/} an abstract rotation system $\mathcal{S}$ if $\mathcal{R}$ has an induced subsystem isomorphic to $\mathcal{S}$. 

The following theorem generalizes Theorem~\ref{theorem_Cm1_or_Tm2}. 

\begin{theorem}\label{veta_unavoidable_kombinatoricka}
For all positive integers $m_1, m_2$ there is an $M$ such that every good abstract rotation system on  $M$ elements contains $\mathcal{C}_{m_1}$ or $\mathcal{T}_{m_2}$. 
\end{theorem}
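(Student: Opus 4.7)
The plan is to apply Ramsey's theorem for $4$-uniform hypergraphs and then carry out a case analysis on the eight labelled types of good $4$-rotation systems listed in Observation~\ref{obs_8_rotacnich_systemu}. Given $\mathcal{R}$ on $\{1,\ldots,M\}$, I colour each $4$-subset $\{a<b<c<d\}$ by the labelled type of $\mathcal{R}[\{a,b,c,d\}]$, yielding an $8$-colouring. For $M$ at least the corresponding Ramsey number for $4$-uniform hypergraphs with $8$ colours and monochromatic size $N:=\max(m_1,m_2)$, I obtain a subset $S\subseteq\{1,\ldots,M\}$ of size $N$ on which every induced $4$-subsystem, in the canonical order inherited from $\{1,\ldots,M\}$, has the same labelled type $\tau$.

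The heart of the proof is an eight-way case analysis of $\tau$. In the \emph{convex cases} $\tau\in\{H_3^R,H_3\}$, all signs in every $4$-subsystem are positive (respectively negative); so for each $v\in S$ and each triple $\{x,y,z\}\subseteq S\setminus\{v\}$ with $x<y<z$, the restriction of $v$'s rotation to $\{x,y,z\}$ is prescribed to be $(x,y,z)$ (respectively $(x,z,y)$). Since a cyclic permutation of $N-1$ elements is determined by its $3$-element restrictions, the rotation at $v$ is forced to be the natural (respectively reverse) cyclic order of $S\setminus\{v\}$, so $\mathcal{R}[S]\cong\mathcal{C}_N$, the mirror case being reduced to $\mathcal{C}_N$ by the relabelling $i\mapsto N+1-i$; hence $\mathcal{R}$ contains $\mathcal{C}_{m_1}$. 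In the \emph{twisted cases} $\tau\in\{H_2,H_2^R\}$, the sign pattern $(+,+,-,-)$ in every $4$-subsystem---together with a triple-by-triple analysis that distinguishes triples according to how many elements lie above or below $v$---uniquely forces the rotation at each $v\in S$ to be the twisted rotation (reversed prefix of elements smaller than $v$, followed by natural suffix of elements larger than $v$, or its mirror), so $\mathcal{R}[S]\cong\mathcal{T}_N$ or its mirror, which represents the same weak isomorphism class of $T_N$ and thus the same class in the theorem's sense, yielding $\mathcal{T}_{m_2}$. In the \emph{inconsistent cases} $\tau\in\{H_1,H_1^R,H_4,H_4^R\}$, I check by hand that no good abstract rotation system on five elements has all $4$-subsystems of type $\tau$: the sign prescriptions at some fifth vertex translate, via the same uniqueness argument, into contradictory cyclic orderings on four elements (for example simultaneously requiring $3<4<5$ and $3<5<4$ in one cyclic order), forcing $|S|\le 4$ in those cases.

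The main obstacle is precisely this eight-way case analysis: for each of the eight types one must either verify that the prescribed triple-restrictions at every vertex admit a unique cyclic completion and identify it with the convex or twisted rotation, or exhibit an explicit cyclic-permutation contradiction at a fifth vertex. These arguments are elementary bookkeeping of cyclic orders, playing the role in this combinatorial setting that Lemmas~\ref{lemma_C5} and~\ref{lemma_T6} played topologically, but organising them cleanly into the eight subcases---and in the twisted case handling the mirror of $\mathcal{T}_N$ so that it is genuinely identified with $\mathcal{T}_N$---is where the technical weight of the proof resides.
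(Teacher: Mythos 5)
Your approach is genuinely different from the paper's, and in fact simpler in one respect. The paper first normalizes (fixes vertex~$1$, takes $\pi_1=(2,\dots,M)$ and $\pi_i(1)=1$), observes that the sign at vertex~$1$ is then always positive and that the parity condition leaves only four possible sign vectors for the remaining three vertices, and applies Ramsey for $3$-uniform hypergraphs with $4$ colours. You skip the normalization entirely and apply Ramsey for $4$-uniform hypergraphs with the $8$ labelled types of Observation~\ref{obs_8_rotacnich_systemu} as colours. Both produce a monochromatic subset whose sign pattern forces every rotation on that subset; the paper's route gives a (much) smaller $M$ because the Ramsey number for $3$-uniform hypergraphs grows far slower, while your route avoids the normalization trick and is conceptually more uniform. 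Your identification of the two convex, two twisted, and four inconsistent labelled types matches the sign patterns $(\pm,\pm,\pm,\pm)$: all-positive/all-negative are convex, $(+,+,-,-)$ and $(-,-,+,+)$ are twisted, and the remaining four are indeed contradictory on five elements, as you claim (this is a short check using the determination of a cyclic order by its triples).

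One step deserves more care, and it is a subtlety that the paper itself glosses over. In the twisted case with sign pattern $(-,-,+,+)$ you obtain the \emph{reverse} $\mathcal{T}_N^{-1}$ of the twisted rotation system, and you dispose of it by saying it ``represents the same weak isomorphism class of $T_N$ and thus the same class in the theorem's sense.'' But Theorem~\ref{veta_unavoidable_kombinatoricka} is about abstract rotation systems under the paper's notion of isomorphism, which is relabelling only, not weak isomorphism of topological graphs. For $N\ge 5$ the systems $\mathcal{T}_N$ and $\mathcal{T}_N^{-1}$ are in fact \emph{not} related by any relabelling: for example in $\mathcal{T}_5$ no vertex $v$ has the property that its successor in $\pi_u$ is different for each $u\ne v$, whereas in $\mathcal{T}_5^{-1}$ vertex~$3$ does have this property. (By contrast $\mathcal{C}_N^{-1}\cong\mathcal{C}_N$ via $i\mapsto N+1-i$, as you note, and $\mathcal{T}_4^{-1}\cong\mathcal{T}_4$.) So in that case your argument actually yields $\mathcal{T}_{m_2}^{-1}$ rather than $\mathcal{T}_{m_2}$. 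The paper's own proof runs into the identical issue with its type $001$, and its downstream use of Theorem~\ref{veta_unavoidable_kombinatoricka} is unaffected because Lemma~\ref{lemma_abstract_T816} can be applied to $\mathcal{R}^{-1}$ (reversal preserves goodness) and the resulting forbidden pattern transfers back to $\mathcal{R}$; but as a stand-alone statement the theorem really produces ``$\mathcal{C}_{m_1}$ or $\mathcal{T}_{m_2}$ or $\mathcal{T}_{m_2}^{-1}$.'' You should either state your conclusion in that form, or explicitly reduce to the case $\mathcal{T}_N$ by passing to the reversed system.
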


To keep the proof simple, we do not try to optimize the value of $M$, as a function of the parameters $m_1$ and $m_2$. However, it is likely that the same bound as in Theorem~\ref{theorem_Cm1_or_Tm2} can be proved even in this generalized setting, by adapting the original topological proof~\cite{PST03_unavoidable}. We also note that the assumption of being good is not necessary: Theorem~\ref{veta_unavoidable_kombinatoricka} holds in general for all abstract rotation systems, only with larger values of $M$.

\begin{proof}
Let $(\pi_1, \pi_2, \dots, \pi_M)$ be a good abstract rotation system on the set $\{1,$ $2,$ $\dots,$ $M\}$. Assume without loss of generality that $\pi_1=(2,3,\dots,M)$ and that $\pi_i(1)=1$ for $i>1$. 
For every three elements $i,j,k$ with $1<i<j<k$, consider the induced abstract rotation system $\mathcal{R}[\{1,i,j,k\}]$. For $l\in \{i,j,k\}$, let $t_{i,j,k}(l)=1$ if the rotation at $l$ in $\mathcal{R}[\{1,i,j,k\}]$ is positive and $t_{i,j,k}(l)=0$ if the rotation at $l$ in $\mathcal{R}[\{1,i,j,k\}]$ is negative. The {\em type\/} of the triple $(i,j,k)$ is the sequence $t_{i,j,k}(i)t_{i,j,k}(j)t_{i,j,k}(k)$. By the parity condition, we have the following four types of triples: $111,100,010$ and $001$. By Ramsey's theorem, if $M$ is sufficiently large, there is a subset $W\subseteq \{2,3,\dots,M\}$ of size $m=\max(m_1,m_2)$ such that all triples from $W$ have the same type. Without loss of generality, assume that $W=\{2,3,\dots,m+1\}$. Let $abc$, with $a,b,c\in\{0,1\}$, be the type shared by all the triples from $W$.
If $a=1$, then for each $l \in W$, the entries $l+1, l+2, \dots, m+1$ form an increasing sequence in $\pi_l$. If $a=0$, the entries $l+1, l+2, \dots, m+1$ form a decreasing sequence in $\pi_l$. Similarly, the entries $2,3,\dots,l-1$ form an increasing sequence in $\pi_l$ if $c=1$ and a decreasing sequence if $c=0$. If $b=1$, then in $\pi_l$, all entries smaller than $l$ appear before all entries larger than $l$. If $b=0$, then in $\pi_l$, all entries smaller than $l$ appear after all entries larger than $l$. Therefore, if $abc=111$ or $010$, then $W$ induces an abstract rotation system isomorphic to $\mathcal{C}_m$, and if $abc=100$ or $001$, then $W$ induces an abstract rotation system isomorphic to $\mathcal{T}_m$.
\end{proof}

The following two lemmas generalize Lemma~\ref{lemma_C5} and Lemma~\ref{lemma_T6}. Again, we do not try to optimize the sizes of the two abstract rotation systems $\mathcal{C}_{m_1}$ and $\mathcal{T}_{m_2}$.

\begin{lemma}\label{lemma_abstract_C7} 
Let $\mathcal{R}$ be a good abstract rotation system on the set $\{1,2,\dots,8\}$. Suppose that the subsystem of $\mathcal{R}$ induced by the vertices $1,2,\dots,7$ is $\mathcal{C}_7$, with $(v_1, \dots,v_7)=(1,\dots,7)$. Then the rotation at $8$ is not $(1,3,5,7,2,4,6)$. 
\end{lemma}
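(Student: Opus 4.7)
Proof proposal: Assume for contradiction that the rotation at $8$ is $\sigma_8 = (1,3,5,7,2,4,6)$. Since $\mathcal{R}[\{1,\ldots,7\}] = \mathcal{C}_7$, each rotation $\sigma_i$ for $i \in \{1,\ldots,7\}$ is obtained from the convex rotation $\pi_i = (1,2,\ldots,i-1,i+1,\ldots,7)$ by inserting $8$ into exactly one of six cyclic gaps; I parameterize this choice by the element $b_i \in \{1,\ldots,7\}\setminus\{i\}$ that appears immediately after $8$ in $\sigma_i$. The goal is to show that no assignment of $b_1,\ldots,b_7$ is compatible with the parity condition (Observation~\ref{obs_8_rotacnich_systemu}) applied to all the $4$-subsets $\{i,j,k,8\}$.

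The central bookkeeping step is the following: for a triple $i < j < k$ in $\{1,\ldots,7\}$, the sign of the rotation at $i$ in $\mathcal{R}[\{i,j,k,8\}]$ is negative precisely when $8$ lies in the arc of $\sigma_i$ running from $j$ to $k$ in the natural direction; equivalently, when $b_i \in \{j+1,\ldots,k\} \setminus \{i\}$. Analogous descriptions hold for the signs at $j$ and $k$. The sign at $8$ is the sign of $\pi_8$ restricted to $\{i,j,k\}$, which can be read off directly from $(1,3,5,7,2,4,6)$. Thus every $4$-subset of this form produces an explicit parity relation among the indicator events $\{b_r \in I_r\}$ for $r \in \{i,j,k\}$.

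With this translation in hand I would proceed by a structured case analysis. First, I would use the four ``pure-odd-block'' subsets $\{1,3,5,8\}$, $\{1,3,7,8\}$, $\{1,5,7,8\}$, $\{3,5,7,8\}$ (whose sign at $8$ is positive, since $1,3,5,7$ appear in order in $\pi_8$) and the single ``pure-even-block'' subset $\{2,4,6,8\}$ to constrain the variables $b_1,b_3,b_5,b_7$ and $b_2,b_4,b_6$ separately. Next, I would apply constraints from ``mixed'' subsets that straddle the odd and even blocks of $\pi_8$, where the asymmetry coming from the single cyclic jump from $7$ to $2$ in $\pi_8$ forces a negative sign at $8$; this asymmetry plays the role of the planarity obstruction in the topological proof of Lemma~\ref{lemma_C5}. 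Finally, a handful of local constraints from subsets of consecutive indices $\{i,i{+}1,i{+}2,8\}$ should pin down each $b_i$ to at most a couple of values, after which one remaining $4$-subset exhibits a violation of the parity condition.

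The main obstacle is that, unlike the topological proof of Lemma~\ref{lemma_C5} which closes each case by a single Jordan-curve separation, every such geometric step here must be simulated by repeated use of the $K_4$ parity condition, making the argument inherently case-heavy. I expect the cleanest presentation to exploit the cyclic symmetry of $\mathcal{C}_7$ together with the odds-before-evens symmetry of $\sigma_8$ to collapse many cases into one; but it is precisely the loss of topological information in passing from Lemma~\ref{lemma_C5} to its combinatorial analog that forces the convex subsystem to be enlarged from $\mathcal{C}_5$ to $\mathcal{C}_7$, providing the extra parity constraints that replace the missing planarity.
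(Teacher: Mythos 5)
Your translation into a parity bookkeeping is sound: parameterizing the unknown rotations by the element $b_i$ immediately following $8$ in $\sigma_i$, and observing that the sign at a vertex $r\in\{i,j,k\}$ in $\mathcal{R}[\{i,j,k,8\}]$ is negative exactly when $b_r$ falls in the appropriate arc, while the sign at $8$ is read off from $(1,3,5,7,2,4,6)$, is the right reduction (and is essentially the paper's starting point). But that is where your argument stops. Everything after the translation is a plan written in the conditional mood --- ``I would proceed by a structured case analysis,'' ``should pin down each $b_i$ to at most a couple of values,'' ``after which one remaining $4$-subset exhibits a violation.'' The case analysis \emph{is} the content of the lemma; without it you have set up the problem, not solved it. As written, there is no verification that any specific collection of $4$-subsets actually closes the argument, and no inspection of which parity relations they impose.

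For the record, the paper does close the gap with a short chain of exactly eight subsystems. For each of the seven cyclically consecutive triples $\{i,i+1,i+2\}\subset\{1,\dots,7\}$, the restriction of $(1,3,5,7,2,4,6)$ is negative, so by the parity condition the subsystem $\mathcal{R}[\{i,i+1,i+2,8\}]$ has at least one negative rotation among the vertices $i,i+1,i+2$. A negative rotation at $j$ in such a subsystem forces $8$ into one specific gap of $\pi_j$, and the three subsystems through $j$ give three \emph{distinct} gaps; hence at most one of them can be satisfied by $j$. With seven triples to satisfy and seven vertices each usable at most once, every vertex must be used exactly once, which pins each $b_j$ into $\{j-1,j+1,j+2\}$ (cyclically). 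Plugging this into the one further subsystem $\mathcal{R}[\{1,3,5,8\}]$ then yields rotations $(3,5,8),\ (1,8,5),\ (1,3,8),\ (1,3,5)$ --- exactly one negative --- violating parity. Your outline gestures at the same raw material (the subsets $\{1,3,5,8\}$, $\{2,4,6,8\}$, and the consecutive-index subsets appear in it) but in a different order and without the counting step that makes the argument short; to turn your proposal into a proof you would have to actually carry out a chain of deductions of this kind rather than assert that one exists.
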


\begin{proof}
Let $\mathcal{R}=(\pi_1,\pi_2,\dots,\pi_8)$ and suppose for contradiction that $\pi_8 = (1, 3,$ $5, 7, 2, 4, 6)$. Let $i,i+1,i+2$ be three consecutive numbers in the cyclic sequence $(1,2,\dots,7)$. The subsystem $\mathcal{R}[\{i,i+1,i+2,8\}]=(\rho^i_i,\rho^i_{i+1},\rho^i_{i+2},\rho^i_8)$ has at least one negative triple among $\rho^i_i,\rho^i_{i+1},\rho^i_{i+2}$. If $\rho^j_j$ is negative, that is, $\rho^j_j=(j+1,8,j+2)$, we have $\pi_j=(1,2,\dots,j-1,j+1,8,j+2,\dots,7)$. Similarly, if $\rho^{j-1}_j$ is negative, then $\pi_j=(1,2,\dots,j-1,8,j+1,j+2,\dots,7)$. Finally, if $\rho^{j-2}_j$ is negative, then $\pi_j=(1,2,\dots,j-2,8,j-1,j+1,j+2,\dots,7)$. Therefore, a negative triple $\rho^i_j$ precisely determines the position of the element $8$ in the rotation $\pi_j$, and each such rotation can arise from at most one negative triple $\rho^i_j$. It follows that in each of the rotations $\pi_j, j\in \{1,2,\dots,7\}$, the element $8$ appears in one of the three possible positions between the elements $j-2$ and $j+2$. But then the subsystem
$\mathcal{R}[\{1,3,5,10\}]=((10,3,5),(1,10,5),(1,3,10),(1,3,5))$ has exactly one negative triple, a contradiction.
\end{proof}

Let $\mathcal{R}=(\rho_1,\rho_2,\rho_3,\rho_4)$ be an abstract rotation system on a $4$-element set. The {\em signature\/} of $\mathcal{R}$ is a sequence $(\varepsilon_1,\varepsilon_2,\varepsilon_3,\varepsilon_4)$ of four symbols, where $\varepsilon_i$ is '$+$' if $\rho_i$ is positive and '$-$' if $\rho_i$ is negative.

\begin{lemma}\label{lemma_abstract_T816}
Let $m=816$. Let $\mathcal{R}$ be a good abstract rotation system on the set $\{1,2,\dots,m\}$. Suppose that the subsystem of $\mathcal{R}$ induced by the vertices $1,2,\dots$, $m-1$ is $\mathcal{T}_{m-1}$, with $(v_1, \dots,v_{m-1})=(1,\dots,m-1)$. Then the rotation at $m$ is not $(1,3,\dots,m-1,2,4,\dots,m-2)$. 
\end{lemma}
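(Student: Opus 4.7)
The argument I would give follows the template of the proof of Lemma~\ref{lemma_abstract_C7}. Suppose for contradiction that the rotation at $m$ is the given pattern. The goodness hypothesis, applied to each 4-element subset $\{i,j,k,m\}$ of $\{1,\dots,m\}$, yields one parity equation; the aim is to combine many such equations, together with a Ramsey-style pigeon-hole step made possible by the size $m-1=815$, and to extract a small explicit subsystem whose number of negative rotations is odd, contradicting goodness. The extra pigeon-hole step seems necessary because the induced subsystem on $\{1,\dots,m-1\}$ is twisted rather than convex, which weakens the per-triple constraints compared with Lemma~\ref{lemma_abstract_C7} and so prevents a direct copy of that proof.

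\textbf{Sign calculus.}
For each triple $i<j<k$ in $\{1,\dots,m-1\}$ I would first determine the sign of each of the four rotations of $\mathcal{R}[\{i,j,k,m\}]$. Using the specified rotation at $m$ (odd entries in increasing order followed by even entries in increasing order), the sign at $m$ depends only on the parities of $i,j,k$: a direct case check shows that it is negative exactly when the middle element $j$ has opposite parity to $i$ and $k$. The signs at the three other vertices depend instead on where $m$ is inserted in the twisted rotation $\pi_l = (l-1,\dots,1,l+1,\dots,m-1)$, for $l\in\{i,j,k\}$: specifically, on which arc of $\pi_l$ cut by the other two chosen vertices contains $m$. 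The parity condition for $\mathcal{R}[\{i,j,k,m\}]$ then becomes, for each triple, a linear equation modulo $2$ linking these three arc-data with a right-hand side dictated by the parities of $i,j,k$.

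\textbf{Pigeon-hole and endgame.}
For each vertex $l\in\{1,\dots,m-1\}$ I would encode the position of $m$ in $\pi_l$ by a coarse descriptor: which of the two twisted blocks $(l-1,\dots,1)$ and $(l+1,\dots,m-1)$ of $\pi_l$ contains $m$, together with a constant number of further bits locating it relative to a few marker thresholds. This yields a bounded number of classes; iterated pigeon-hole applied to the $815$ vertices then produces a subset $W\subseteq\{1,\dots,m-1\}$ on which the descriptor is constant and which still contains enough vertices of each parity. Inside $W\cup\{m\}$ I would then select a specific small subsystem, mimicking the role of $\{1,3,5,8\}$ at the end of the proof of Lemma~\ref{lemma_abstract_C7}: the uniform descriptor on $W$ pins down the signs at the three chosen vertices of $W$ via Step~1, the parity pattern of those vertices forces the sign at $m$, and the choice of the small subsystem is made so that the total number of negative rotations is odd, contradicting goodness.

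\textbf{Main obstacle.}
The delicate point is the pigeon-hole reduction. The descriptor must simultaneously be fine enough to determine all the signs used in the endgame, be coarse enough that the iterated pigeon-hole leaves a subset of useful size with the required balance between odd and even vertices, and fit coherently with the chosen final 4-element subsystem. The choice of descriptor and the choice of final subsystem are thus interdependent, and the explicit constant $m=816$ seems calibrated precisely to make both fit. Consistent with the remark preceding Lemma~\ref{lemma_abstract_C7} that the sizes of $\mathcal{C}_{m_1}$ and $\mathcal{T}_{m_2}$ are not optimized, I would not attempt to tune $m$ further, but would track the iterated pigeon-hole in the same careful but non-tight spirit as the proof of Theorem~\ref{veta_unavoidable_kombinatoricka}.
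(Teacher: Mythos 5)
Your high-level strategy — classify each vertex $l$ by where $m$ lands in the twisted rotation $\pi_l$, apply the goodness (parity) condition to selected $4$-subsets $\{i,j,k,m\}$, and derive a contradiction — matches the paper's intent, and you are right that the twisted subsystem gives weaker per-triple constraints than the convex one in Lemma~\ref{lemma_abstract_C7}. But your proposed mechanism has a structural gap that prevents it from carrying through as sketched.

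The paper does not perform a pigeon-hole extraction of a large \emph{constant-descriptor} subset followed by a pointwise sign computation. Instead it partitions (most of) $\{1,\dots,m-1\}$ into six classes according to the position of $m$ in $\pi_i$, and for each class it proves a separate \emph{capacity bound}: $|W'_4|\le 8k$, $|W_3|\le m'/2+3$, $|W_1^{+}|,|W_2^{+}|\le 8k$, and $|W_1^{-}|,|W_2^{-}|\le (m-6)/k$. The contradiction is obtained because these capacities sum to less than $m'=m-2k-4$ when $m=816$, $k=8$. These capacities are \emph{not uniform}: the third class may legally contain up to roughly $m'/2\approx 400$ elements. Consequently an ordinary pigeon-hole into the coarse descriptor classes only guarantees a class of size $\approx m'/6 \approx 130$, which does \emph{not} overflow $W_3$; from a subset of that size, the argument does not produce a bad $4$-subsystem at all. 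This is exactly where the proposal breaks: your claim that ``the uniform descriptor on $W$ pins down the signs at the three chosen vertices'' is false for the third class. The paper's bound on $|W_3|$ is a delicate interval/parity argument (split $W_3$ into even and odd elements, look at the minimal interval $I$ spanned by the even ones, and find a triple $e_1<o_2<e_3$ giving signature $(+,-,-,-)$), and it requires $|W_3|$ to exceed roughly $m'/2$; no choice of three vertices works for a generic constant-descriptor subset.

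There is a second missing ingredient. For types $1$ and $2$, a coarse descriptor (``in which twisted block of $\pi_i$ does $m$ sit, plus a bounded number of threshold bits'') is not enough to determine the signs in the relevant $4$-subsystems. The paper refines $W_1$ and $W_2$ by a threshold $j(i)\in\{0,\dots,k\}$ at which the signature of $\mathcal{R}[\{i-2j+1,i,i+1,m\}]$ (resp.\ $\mathcal{R}[\{i-2j,i,i+2,m\}]$) flips from $(-,-,-,-)$ to $(+,-,+,-)$ (resp.\ from $(-,+,-,+)$ to $(+,+,+,+)$). The existence and monotonicity of this threshold, and the fact that a $j(i)<k$ forces $\pi_{i+1}$ (resp.\ $\pi_{i+2}$) to be of the fourth type, is what yields the bounds $|W_1^{+}|,|W_2^{+}|\le |W'_4|$ and the injectivity argument behind $|W_1^{-}|,|W_2^{-}|\le (m-6)/k$. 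This threshold is not a function of the position of $m$ in $\pi_i$ alone; it depends on the interplay between the positions of $m$ in several rotations, so it cannot be encoded in the ``descriptor'' you propose.

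In short: the right skeleton is (a) classify vertices by where $m$ sits in $\pi_i$, (b) prove that each class has bounded size by exhibiting forbidden $4$-subsystems, and (c) sum the bounds to beat $m'$ — a capacity argument, not a Ramsey/pigeon-hole extraction. Your sketch replaces (b)–(c) with a uniform pigeon-hole plus a single terminal $4$-subsystem; that template, which works for the tighter Lemma~\ref{lemma_abstract_C7}, does not transfer here because of the non-uniform class capacities (especially $W_3$) and the need for the refined threshold $j(i)$.
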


\begin{proof}
Let $\mathcal{R}=(\pi_1,\pi_2,\dots,\pi_{m})$ and suppose for contradiction that $\pi_{m}=(1, 3,$ $\dots,$ $m-1, 2, 4, \dots, m-2)$. 
Let $k=8$, $W=\{2k+1,2k+2,\dots, m-4\}$ and $m'=|W|=m-2k-4$.

For $i\in W \cup \{m-3, m-2\}$, we say that a rotation $\pi_i$ is of the {\em first type\/} if the element $m$ appears in $\pi_i$ within the subinterval $(i-2k, \dots, 1, i+1)$, of the {\em second type\/} if $m$ appears in $\pi_i$ within the subinterval $(i+2, \dots, m-1,i-1)$, of the {\em third type\/} if $\pi_i=(i-1,\dots,1,i+1,m,i+2,\dots,m-1)$, and of the {\em fourth type\/} if $m$ appears in $\pi_i$ within the subinterval $(i-1, \dots, i-2k)$.
Let $W_1$ ($W_2, W_3$) be the set of those elements $i\in W$ such that $\pi_i$ is of the first (second, third) type, respectively. Let $W'_4$ be the set of those elements $i\in W \cup \{m-3, m-2\}$ such that $\pi_i$ is of the fourth type.

First we show that $|W'_4|\le 8k$. If $|W'_4|\ge 8k+1$, then at least $4k+1$ elements $i_1<i_2<\dots<i_{4k+1}$ of $W'_4$ are all odd or all even. In particular, the rotation $\rho_{m}$ in the subsystem $\mathcal{R}[\{i_1,i_{2k+1},i_{4k+1},m\}]=(\rho_{i_1},\rho_{i_{2k+1}},\rho_{i_{4k+1}},\rho_{m})$ is positive. Since the rotations $\pi_{i_1}, \pi_{i_{2k+1}}$ and $\pi_{i_{4k+1}}$ are of the fourth type, we observe that the signature of $\mathcal{R}[\{i_1,i_{2k+1},i_{4k+1},m\}]$ is $(+,+,-,+)$,
which is a contradiction with the parity condition.

Next we show that $|W_3|\le m'/2 + 3$. Suppose for contradiction that $|W_3|\ge m'/2 + 4$. Let $W_3^E$ be the set of even elements of $W_3$ and let $I$ be the smallest interval containing $W_3^E$.
Let $W_3^O=W_3\setminus W_3^E$ be the set of odd elements of $W_3$. Since $|W_3^E \cup (W_3^O\setminus I)|\le m'/2+1$, the interval $I$ contains at least $3$ odd elements $o_1<o_2<o_3$ of $W_3$. In particular, for $e_1=\min I$ and $e_3=\max I$, we have $e_1,e_3 \in W_3^E$, $o_2\ge e_1 + 3$ and $e_3 \ge o_2 + 3$. It follows that $\mathcal{R}[\{e_1,o_2,e_3,m\}]=((m,o_2,e_3),(e_1,m,e_3),(o_2,e_1,m),(o_2,e_1,e_3))$. But this subsystem has signature $(+,-,-,-)$, a contradiction.

For each $i\in W_1$ and $j\in\{1,2,\dots,k\}$, we consider the subsystem $\mathcal{R}[\{i-2j+1,i,i+1,m\}]=(\rho^{i,j}_{i-2j+1},\rho^{i,j}_i,\rho^{i,j}_{i+1},\rho^{i,j}_{m})$.
Since the parity of $i$ is opposite to the parity of $i-2j+1$ and $i+1$, the rotation $\rho^{i,j}_{m}$ is negative. Since the rotation $\pi_i$ is of the first type, we have $\rho^{i,j}_i=(i-2j+1,m,i+1)$. It follows that the signature of $\mathcal{R}[\{i-2j+1,i,i+1,m\}]$ is either $(+,-,+,-)$ or $(-,-,-,-)$. Moreover, there is a $j(i)\in\{0,1,\dots,k\}$ such that for $j\le j(i)$ the signature of $\mathcal{R}[\{i-2j+1,i,i+1,m\}]$ is $(-,-,-,-)$ and for $j> j(i)$ the signature of $\mathcal{R}[\{i-2j+1,i,i+1,m\}]$ is $(+,-,+,-)$.

Similarly for each $i\in W_2$ and $j\in\{1,2,\dots,k\}$, we consider the subsystem $\mathcal{R}[\{i-2j,i,i+2,m\}]=(\sigma^{i,j}_{i-2j},\sigma^{i,j}_i,\sigma^{i,j}_{i+1},\sigma^{i,j}_m)$. We have $\sigma^{i,j}_m=(i-2j,i,i+2)$ and $\sigma^{i,j}_i=(i-2j,i+2,m)$, thus $\mathcal{R}[\{i-2j,i,i+2,m\}]$ has signature either $(+,+,+,+)$ or $(-,+,-,+)$. Again, there is a $j(i)\in\{0,1,\dots,k\}$ such that the signature is $(-,+,-,+)$ for $j\le j(i)$ and $(+,+,+,+)$ for $j> j(i)$.

Let $W_1^{+}=\{i\in W_1; j(i)<k\}$. For every $i\in W_1^{+}$, the signature of $\mathcal{R}[\{i-2k+1,i,i+1,m\}]$ is $(+,-,+,-)$. In particular, the rotation $\pi_{i+1}$ is of the fourth type. Therefore, $|W_1^{+}|\le |W'_4| \le 8k$.

Similarly, let $W_2^{+}=\{i\in W_2; j(i)<k\}$. For every $i\in W_1^{+}$, the signature of 
$\mathcal{R}[\{i-2k,i,i+2,m\}]$ is $(+,+,+,+)$. In particular, the rotation $\pi_{i+2}$ is of the fourth type. Therefore, $|W_2^{+}|\le |W'_4| \le 8k$.

Let $W_1^{-} = W_1\setminus W_1^{+} = \{i\in W_1; j(i)=k\}$. For every $i \in W_2^{+}$ and every $j\in\{1,2,\dots,k\}$, the signature of $\mathcal{R}[\{i-2j+1,i,i+1,m\}]$ is $(-,-,-,-)$. In particular, $\pi_{i-2j+1}=(i-2j,\dots,1,i-2j+2,\dots,i,m,i+1,\dots,m-1)$.
Observe that for every $l\in\{2,\dots,m-5\}$, there is at most one pair $i,j$ such that $i\in W_1^{-}, j,\in\{1,2,\dots,k\}$ and $l=i-2j+1$. Thus we have $|W_1^{-}|\le \frac{m-6}{k}$.

Let $W_2^{-} = W_2\setminus W_2^{+} = \{i\in W_2; j(i)=k\}$.
For every $i \in W_2^{+}$ and every $j\in\{1,2,\dots,k\}$, the signature of $\mathcal{R}[\{i-2j,i,i+2,m\}]$ is $(-,+,-,+)$. In particular, the element $m$ appears in $\pi_{i-2j}$ in one of the two positions in the subinterval $(i,i+1,i+2)$.
This implies that for every $l\in\{1,2,\dots,m-6\}$, there is at most one pair $i,j$ such that $i\in W_2^{-}, j,\in\{1,2,\dots,k\}$ and $l=i-2j$. Thus we have $|W_2^{-}|\le \frac{m-6}{k}$.

Putting all the estimates together, we have 
\begin{align*}
m'=|W| &\le |W_1^+| + |W_1^-| + |W_2^+| + |W_2^-| + |W_3| + |W'_4|\\
 &\le \frac{m'}{2} + 3 + \frac{2(m-6)}{k} + 24k
\end{align*}
and thus
\begin{align*}
k(m-2k-4) &\le 6k + 4(m-6) + 48k^2, \\
(k-4)m    &\le 50 k^2 + 10 k - 24. \\
\end{align*}

By our choice $m=816$ and $k=8$, this gives $4\cdot 816 \le 3256$ and we have a contradiction.
\end{proof}

\subsection{Graphs with maximum number of crossings}\label{sub_max_crossings}

Harborth and Mengersen~\cite{HM92_drawings} investigated simple complete topological graphs on $n$ vertices with maximum number of crossings, which is ${n \choose 4}$. They showed the lower bound $e^{\Omega(\sqrt{n})}$ on the number $T_\mathrm{w}^{\mathrm{max}}(n)$ of different weak isomorphism classes of such (unlabeled) graphs. Their construction actually gives a better lower bound $T_\mathrm{w}^{\mathrm{max}}(n) \ge 2^{n(\log{n}-O(1))}$~\cite{K09_enumeration}.

We do not have any better upper bound on $T_\mathrm{w}^{\mathrm{max}}(n)$ than that from Theorem~\ref{veta_uplne}, thus the problem of determining $T_\mathrm{w}^{\mathrm{max}}(n)$ asymptotically seems to be wide open. However, the following observation could help with improving the upper bound to $2^{O(n^2)}$.

Let $G$ be a simple complete topological graph with vertex set $V$ and with ${|V| \choose 4}$ crossings. Let $v\in V$ and let $G'$ be a subgraph of $G$ induced by $V\setminus \{v\}$. A {\em face\/} of $G'$ is a connected region of the set obtained from the plane by removing all the edges of $G'$. 
Two faces $F'_1$ and $F'_2$ in two simple complete topological graphs $G'_1$ and $G'_2$ weakly isomorphic to $G'$ are considered {\em equivalent\/} if every triangle $T_1$ in $G'_1$ and the corresponding triangle $T_2$ in $G'_2$ satisfy the following condition: the triangles $T_1$ and $T_2$ have the same orientation if and only if either $T_1$ contains $F'_1$ and $T_2$ contains $F'_2$, or $F'_1$ is outside $T_1$ and $F'_2$ is outside $T_2$.
By a {\em combinatorial face\/} we mean an equivalence class of faces, but also any particular face from the class.
Lemma~\ref{lemma_triangle} implies that the combinatorial face of $G'$ that contains $v$ uniquely determines the rotation of $v$ in $G$. Therefore, the number of possible rotations of $v$, with the weak isomorphism class of $G'$ fixed, is bounded from above by the number $f(G')$ of possible combinatorial faces in a simple topological graph weakly isomorphic to $G'$. The number $f(G')$ may be exponential, for example when $G'$ is the convex graph $C_n$. This graph has $n/2$ pairwise crossing edges (main diagonals), which may be drawn through a common point $x$. Then each of the edges can be redrawn to go around $x$ from the left or from the right. Each of these choices produces a different combinatorial face containing $x$. On the other hand, it can be shown that $f(C_n)=2^{O(n)}$, since each of the bounded combinatorial faces of $C_n$ can be assigned to a unique subset of pairwise crossing diagonals, in the following way. Let $C$ be the Hamiltonian cycle of $C_n$ bounding the outer face. To each diagonal $e$ of $C$ we assign the region $r(e)$ bounded by $e$ and by the shorter arc of $C$ determined by the endpoints of $e$. (For the main diagonals, we choose the ``shorter'' arc arbitrarily.) Each face $f$ is assigned to a set $R(f)$ of minimal regions $r(e)$ containing $f$. The set $R(f)$ determines all triangles containing $f$, and all diagonals $e$ such that $r(e)\in R(f)$ are pairwise crossing. A set of pairwise crossing diagonals in $C_n$ is uniquely determined by the set of their endpoints. Therefore, there are at most $2^{n-1}$ possible sets $R(f)$. Accounting for two possible orientations of the drawing of $C_n$, we get the upper bound $f(C_n)\le 2^{n}+2$.

We do not know whether similar upper bound holds for all simple complete topological graphs.

\begin{problem}\label{problem_pocet_sten}
Is it true that for every simple complete topological graph $G$ with $n$ vertices, the number of possible combinatorial faces in simple complete topological graphs weakly isomorphic to $G$ satisfies $f(G)\le 2^{O(n)}?$
\end{problem}

A positive answer to Problem~\ref{problem_pocet_sten} would imply that $T_\mathrm{w}^{\mathrm{max}}(n) = 2^{O(n^2)}$, by the proof in Subsection~\ref{sub_dukaz_vety_uplne}.

A similar question can be asked in the combinatorial setting. In a simple complete topological graph with $n$ vertices and ${n \choose 4}$ crossings, every $4$-tuple of vertices induces a crossing. Therefore, for every complete subgraph with $4$ vertices there are $6$ possible rotation systems, corresponding to the rotation systems of the graphs $H_2, H_3, H_4, H^R_2, H^R_3, H^R_4$ in Table~\ref{rotacni_systemy_K4}. In addition to the parity condition, these rotation systems satisfy the following condition. There exists a pair $i,j\in\{1,2,3,4\}$ such that for $\{k,l\}=\{1,2,3,4\}\setminus \{i,j\}$, the rotation at $k$ is $(i,j,l)$ and the rotation at $l$ is $(i,j,k)$. In fact, there are always four such pairs $i,j$, corresponding to the four edges without crossing in the drawing.

\begin{problem}
What is the number of abstract rotation systems on $n$ elements, where every subsystem induced by $4$ elements is realizable as a rotation system of a simple drawing of $K_4$ with one crossing? 
\end{problem}

We do not know better lower bound than that implied by the topological construction by Harborth and Mengersen~\cite{HM92_drawings,K09_enumeration}. The best upper bound comes from Theorem~\ref{veta_kombinatoricka}.


\section{The upper bound in Theorem~\ref{veta_hlavni}}
Let $G=(V,E)$ be a graph with $n$ vertices and $m$ edges. If $v$ is an isolated vertex in $G$, then $T_w(G)=T_w(G-v)$. Thus, we may assume that $G$ has no isolated vertices. The upper bound on $T_w(G)$ for other graphs $G$ then directly follows.

Let $\mathcal{G}$ be a simple topological graph realizing $G$. A {\em topological component\/} of $\mathcal{G}$ is a maximal connected subset of the plane that is a union of vertices and edges of $\mathcal{G}$. Note that a topological component of $\mathcal{G}$ is a union of components of $G$.
A topological graph $\mathcal{G}$ is {\em topologically connected\/} if it has only one topological component.

First we extend $G$ by adding edges connecting the topological components of $\mathcal{G}$ as follows. Let $\mathcal{C}_1$ and $\mathcal{C}_2$ be two topological components of $\mathcal{G}$. We redraw $\mathcal{C}_2$ so that it has a vertex $v_2$ on the boundary of its outer face, and place this drawing inside a face of $\mathcal{C}_1$ containing a vertex $v_1$ on its boundary. Then we may add the edge $v_1v_2$ as a curve without crossings. We repeat this process until there is only one topological component. Since the graph $G$ had no isolated vertices, we added at most $n/2\le m$ new edges, so the new graph has $n$ vertices and $\Theta(m)$ edges. In this way, we might have created at most $n^{n}\le 2^{O(n\log n)}\le 2^{O(m\log n)}$ different graphs.
Thus, for proving the upper bound on $T_w(G)$, we may assume that $\mathcal{G}$ is topologically connected.

Ideally, we would like to extend the graph $G$ to a connected graph, but it is not clear that it is always possible to connect two components of $G$ that form a single topological component in the drawing by an edge so that the resulting drawing is still a simple topological graph. For example, there are simple topological graphs where some pairs of vertices from different components cannot be connected by an edge, so that the resulting drawing is still simple; see Figure~\ref{obr_8}, left.

\begin{figure}
\begin{center}
\epsfbox{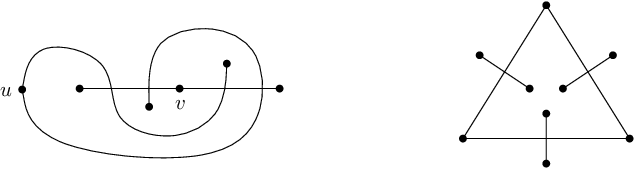} 
\end{center}
\caption{Left: A simple drawing of $P_3 + P_3$ which cannot be extended by an edge $uv$. Right: A topologically connected drawing of a graph with four components, with every spanning forest topologically disconnected.}
\label{obr_8}
\end{figure}

\subsection{A construction of a topological spanning tree}
Next we construct a {\em topological spanning tree\/} $\mathcal{T}$ of $\mathcal{G}$; see Figure~\ref{obr_9_T_representation}, left. A topological spanning tree $\mathcal{T}$ of $\mathcal{G}$ is a simply connected subset of the single topological component of $\mathcal{G}$ containing all vertices of $\mathcal{G}$ and satisfying the property that the only nonseparating points of $\mathcal{T}$ are the vertices of $\mathcal{G}$. Our goal is to find such a tree consisting of $O(n)$ connected portions of edges of $\mathcal{G}$. If $G$ is a complete graph, we may take as $\mathcal{T}$ the star consisting of all edges incident with one vertex of $\mathcal{G}$~\cite{K09_enumeration}, since such edges are internally disjoint. If $G$ is connected, we may start with a drawing of an arbitrary spanning tree of $G$, but as some edges of the tree may cross, we may need to remove portions of some edges to break cycles. If $G$ has multiple components, the construction is a bit more involved. For example, it is not enough to take a union of spanning trees of the individual components, as some of the spanning trees may be topologically disjoint, even if $\mathcal{G}$ is topologically connected; see Figure~\ref{obr_8}, right. Also we may need to include in $\mathcal{T}$ multiple disjoint portions of the same edge.

Let $C_1, \dots, C_k$ be the connected components of $G$. We choose their order in such a way that for every $i\in\{1,2,\dots,k\}$, the drawing of $C_1 \cup \dots \cup C_i$ is topologically connected.
Then for every $i\in\{2,3,\dots,k\}$, there is an edge $e_i$ in $C_i$ that crosses some edge $f_i \in C_1 \cup \dots \cup C_{i-1}$. Let $T_1$ be a spanning tree of $C_1$ and let $e_1$ be an edge of $T_1$. For every $i\in\{2,3,\dots,k\}$, let $T_i$ be a spanning tree of $C_i$ containing $e_i$. For every $i\in\{1,2,\dots,k\}$, let $e_{i,1}=e_i$ and let $e_{i,2}, \dots, e_{i,m_i}$ be the remaining edges of $T_i$ ordered in such a way that for every $j\in\{1,2,\dots,m_i\}$, the subgraph of $T_i$ formed by the edges $e_{i,1},e_{i,2}, \dots, e_{i,j}$ is connected.

In the rest of this section we often identify the vertices, edges and subgraphs of $G$ with the corresponding vertices, edges and subgraphs of $\mathcal{G}$.

The construction of $\mathcal{T}$ proceeds in $k$ phases. In the first phase, we construct a topological spanning tree $\mathcal{T}_1$ of $C_1$, in the following way. We start with the tree $\mathcal{T}_{1,1}$ consisting of the single edge $e_1$. Let $j\in \{2,3,\dots,m_1\}$ and suppose that the tree $\mathcal{T}_{1,j-1}$ has been defined. Let $v_{1,j}$ be the vertex of $e_{i,j}$ that is not contained in the edges $e_1,\dots,e_{j-1}$. If $e_{i,j}$ crosses none of the edges $e_{1,1},\dots,e_{1,j-1}$, then let $\mathcal{T}_{1,j}=\mathcal{T}_{1,j-1}\cup e_{1,j}$. Otherwise, among the crossings of $e_{1,j}$ with the edges $e_{1,1},\dots,e_{1,j-1}$, let $x_{i,j}$ be the crossing closest to $v_{1,j}$. The tree $\mathcal{T}_{1,j}$ is now obtained from $\mathcal{T}_{1,j-1}$ by attaching the portion of $e_{1,j}$ between $x_{1,j}$ and $v_{1,j}$. Finally, we put $\mathcal{T}_1=\mathcal{T}_{1,m_1}$.

Let $i\in\{2,3,\dots,k\}$ and suppose that the tree $\mathcal{T}_{i-1}$ has been defined. In the $i$th phase, we construct the tree $\mathcal{T}_{i}$ in the following way. Let $e_i=w_iw'_i$ and let $x_i$ be the crossing of $e_i$ with $f_{i-1}$. If $e_i$ crosses $\mathcal{T}_{i-1}$ in at least one point, then let $x_{i,1}$ and $x'_{i,1}$ be the crossings of $e_i$ with $\mathcal{T}_{i-1}$ closest to $w_i$ and $w'_i$, respectively. The tree $\mathcal{T}_{i,1}$ is then obtained from $\mathcal{T}_{i-1}$ by attaching
the portion of $e_i$ between $w_i$ and $x_{i,1}$ and the portion of $e_i$ between $w'_i$ and $x'_{i,1}$. If $e_i$ is disjoint with $\mathcal{T}_{i-1}$, then we construct $\mathcal{T}_{i,1}$ from $\mathcal{T}_{i-1}$ by adding the whole edge $e_i$ and joining $e_i$ with $\mathcal{T}_{i-1}$ by the shortest portion of $f_{i-1}$ connecting $x_i$ with a point of $\mathcal{T}_{i-1}$, which may be an endpoint of $f_{i-1}$ or a crossing. The rest of the $i$-th phase is similar to the construction of $\mathcal{T}_1$. In $j$-th step, we construct $\mathcal{T}_{i,j}$ from $\mathcal{T}_{i,j-1}$ by attaching the portion of $e_{i,j}$ connecting the vertex of $e_{i,j}$ not contained in $\mathcal{T}_{i,j-1}$ with the closest point of $\mathcal{T}_{i,j-1}$ along $e_{i,j}$. Finally, we put $\mathcal{T}_i=\mathcal{T}_{i,m_i}$ and $\mathcal{T}=\mathcal{T}_{k}$.

It follows from the construction that the tree $\mathcal{T}$ has $n'\le 2n$ vertices, which are either vertices or crossings of $\mathcal{G}$, and hence at most $2n$ edges, which are portions of edges of $\mathcal{G}$.

\begin{figure}
\begin{center}
\epsfbox{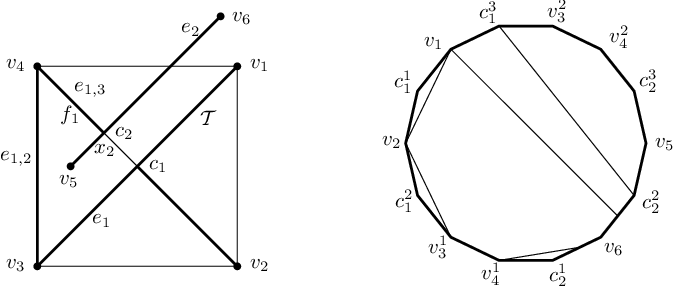}
\end{center}
\caption{A topological spanning tree $\mathcal{T}$ of a simple topological graph with two components (left) and the corresponding $\mathcal{T}$-representation (right).}
\label{obr_9_T_representation}
\end{figure}

\subsection{A construction of a $\mathcal{T}$-representation}
Now we construct the {\em $\mathcal{T}$-representation of $\mathcal{G}$}, which generalizes the star-cut representation defined in~\cite{K09_enumeration}. Consider $\mathcal{G}$ drawn on the sphere $S^2$ and cut the sphere along the edges of $\mathcal{T}$. The resulting open set $S^2\setminus \mathcal{T}$ can be mapped by an orientation preserving homeomorphism $\Phi$ to an open regular $(2n'-2)$-gon $D$, in such a way that the inverse map $\Phi^{-1}$ can be continuously extended to
the closure of $D$ so that the vertices and edges of $D$ are mapped to the vertices and edges of $\mathcal{T}$. Note that every edge of $\mathcal{T}$ corresponds to two edges of $D$, and a vertex of degree $d$ in $\mathcal{T}$ corresponds to $d$ vertices of $D$. See Figure~\ref{obr_9_T_representation}, right. During the cutting operation, every edge $e$ of $G$ can be cut into at most $n'$ pieces by the edges of $\mathcal{T}$. Each such piece becomes a {\em pseudochord\/} of $D$. That is, a simple curve in $D$ with endpoints on the boundary of $D$, and with the property that every two such curves cross in at most one point. Moreover, two pseudochords sharing an endpoint are internally disjoint, as they correspond to portions of edges with a common vertex. To separate the endpoints of the pseudochords, we cut a small disc around
each vertex $w$ of $D$, draw a part of its boundary inside $D$ as an arc $g_w$ and shorten the pseudochords incident with $w$ so that their endpoints are on $g_w$. For an edge $e$ of $D$, let $O_e$ be the counter-clockwise order of the endpoints of the pseudochords along $e$. Similarly, for each vertex $w$ of $D$, let $O_w$ be the counter-clockwise order of the endpoints of the pseudochords along $g_w$. The orders $O_e$ and $O_w$ are given as sequences of labels of the pseudochords. The collection of the orders $O_e$ and $O_w$, which together form a cyclic sequence of endpoints of the pseudochords along the boundary of $D$, is called the {\em perimetric order\/}.

The {\em $\mathcal{T}$-representation of $\mathcal{G}$} is given by (1) the topological spanning tree $\mathcal{T}$ and (2) the perimetric order $O_D$. The tree $\mathcal{T}$ is given as an abstract graph with a rotation system, which determines its combinatorial planar embedding. 

Note that the perimetric order determines which pairs of pseudochords cross and how the pseudochords connect to the edges. Thus the $\mathcal{T}$-representation of $\mathcal{G}$ determines the weak isomorphism class of $\mathcal{G}$. However, topological graphs weakly isomorphic to $\mathcal{G}$ may have several different $\mathcal{T}$-representations, which differ by the orders of crossings along the edges of $\mathcal{T}$. We say that two $\mathcal{T}$-representations are {\em weakly isomorphic\/} if they are representations of weakly isomorphic topological graphs.

\subsection{Counting topological spanning trees}
The upper bound on $T_w(G)$ will follow from an upper bound on the number of weak isomorphism classes of $\mathcal{T}$-representations of simple drawings of $G$. First we estimate the number of different topological spanning trees.

\begin{lemma}\label{lemma_trees}
Let $G$ be a graph with $n$ vertices, $m$ edges and no isolated vertices. Topologically connected simple realizations of $G$ have at most $2^{O(n\log n)}$ different topological spanning trees, up to a homeomorphism of the plane.
\end{lemma}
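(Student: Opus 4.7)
The strategy is to encode each topological spanning tree $\mathcal{T}$ by a small amount of combinatorial data and count the number of possible codes. By the preceding construction, $\mathcal{T}$ has $N \le 2n$ vertices, each being either a vertex of $G$ (all $n$ of which appear) or a crossing of $\mathcal{G}$, and $N-1$ edges, each a portion of some edge of $G$.

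Up to a homeomorphism of the plane, $\mathcal{T}$ is determined by: (i) the labelling of each crossing-type vertex by the unordered pair of $G$-edges meeting there; (ii) the abstract tree structure on the $N$ labelled vertices; and (iii) the rotation (cyclic order of incident edges) at each tree vertex. The labelling of each tree edge by the $G$-edge of which it is a portion is then forced: if both endpoints are vertices of $G$, the portion must be the entire $G$-edge between them; if one endpoint is a vertex $u$ and the other is a crossing $\{e,f\}$, then simplicity of $\mathcal{G}$ forbids $u$ from being an endpoint of both $e$ and $f$ (adjacent edges cannot cross), so only one of $e,f$ gives a valid portion; if both endpoints are crossings, their shared $G$-edge is unique, because two edges of a simple topological graph cross in at most one point.

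With $n_2 \le n$ crossing-type vertices and $N = n + n_2 \le 2n$, I would bound each ingredient separately. For (i), there are at most $(n+1)\binom{m}{2}^{n_2} \le 2^{O(n \log n)}$ choices, using $m \le \binom{n}{2}$. For (ii), Cayley's formula yields at most $N^{N-2} \le (2n)^{2n} = 2^{O(n \log n)}$ labelled trees. For (iii), a tree with degree sequence $(d_1,\dots,d_N)$ admits exactly $\prod_i (d_i-1)!$ rotation systems, and since $\sum_i (d_i-1) = N-2$, the standard multinomial estimate gives $\prod_i (d_i-1)! \le (N-2)! \le (2n)! = 2^{O(n \log n)}$.

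Multiplying the three factors yields $2^{O(n \log n)}$; an additional factor of $2$ accounts for orientation-reversing homeomorphisms of the plane. The only point requiring care is verifying that (i)--(iii) really determine $\mathcal{T}$ up to planar homeomorphism — in particular, the uniqueness of the $G$-edge associated to a given tree edge, which is where simplicity of $\mathcal{G}$ is used. Everything else is elementary enumeration.
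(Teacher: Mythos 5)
Your proof is correct, and it takes a genuinely different route from the paper's. The paper bounds the number of topological spanning trees by walking through the inductive construction of $\mathcal{T}$ given just before the lemma: first fix a spanning forest of $G$ (at most $2^{O(n\log n)}$ choices), then in each of the $n-k$ construction steps bound the number of ways to choose the next edge ($\le m$) and the place to attach it ($\le 8n$), giving $(8nm)^{n-1}\cdot 2^{O(n\log n)}=2^{O(n\log n)}$; the extra care goes into the steps that bridge to a new component of $G$. Your argument instead forgets the construction entirely (using it only for the bound $N\le 2n$ on the vertex count) and directly enumerates the possible combinatorial descriptions of $\mathcal{T}$: labels of crossing-type tree vertices as unordered pairs of $G$-edges, the abstract tree via Cayley's formula, and the rotation system via $\prod_i(d_i-1)!\le (N-2)!$, with a neat observation that simplicity of $\mathcal{G}$ (adjacent edges do not cross, two edges cross at most once, no triple crossings) forces the assignment of each tree edge to the $G$-edge it is a portion of. Both give $2^{O(n\log n)}$. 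Your encoding argument is cleaner as a standalone statement and does not depend on the particular construction used, whereas the paper's argument is natural in context since the construction is already in hand. One small remark: the final factor of $2$ for orientation-reversing homeomorphisms is in the wrong direction (such a homeomorphism produces an inverse rotation system, so two codes can collapse to one equivalence class, not the reverse); it is harmless but unnecessary, since you are already upper-bounding homeomorphism classes by codes.
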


\begin{proof}
Let $k$ be the number of connected components of $G$. A component with $n_i$ vertices has at most $n_i^{n_i-2}$ spanning trees, hence $G$ has at most $2^{O(n\log n)}$ spanning forests. Let $T_1 \cup T_2 \cup \dots \cup T_k$ be a fixed spanning forest of $G$. The inductive construction of the topological spanning tree $\mathcal{T}$ consists of $n-k$ steps. In each step, an edge of some spanning tree $T_i$ is added to the construction. Consider the step where a portion of the edge $e_{i,j}$ is added to the tree $\mathcal{T}_{i,j-1}$. The new edge is attached either to a vertex of $\mathcal{T}_{i,j-1}$ or to an interior point of some edge of $\mathcal{T}_{i,j-1}$. There are two ways how to attach a new edge to an edge of $\mathcal{T}_{i,j-1}$, and $d$ ways how to attach a new edge to a vertex of degree $d$ in $\mathcal{T}_{i,j-1}$. Together, there are $4(n_{i,j}-1)\le 4n'-4 \le 8n$ different ways how to attach a new edge, where $n_{i,j}$ is the number of vertices of $\mathcal{T}_{i,j-1}$, and there are at most $m$ choices for the edge $e_{i,j}$. 

Now consider the step where portions of the edge $e_i$ are added to the tree $\mathcal{T}_{i-1}$. If $e_i$ crosses $\mathcal{T}_{i-1}$, then two portions of $e_i$ are added and this step is equivalent to two previous steps. If $e_i$ does not cross $\mathcal{T}_{i-1}$, then the whole edge $e_i$ and a portion of $f_{i-1}$ are added. There are at most $m$ choices for $e_i$, $m$ choices for $f_{i-1}$, two ways how to attach the portion of $f_{i-1}$ to $e_i$ and at most $8n$ different ways how to attach the portion of $f_{i-1}$ to $\mathcal{T}_{i-1}$. Altogether, we have at most $(8nm)^{n-1}\le 2^{O(n\log n)}$ ways how to construct $\mathcal{T}$.
\end{proof}

\subsection{Counting $\mathcal{T}$-representations}\label{sub_pocitani_T_reprezentaci}
It remains to estimate for each topological spanning tree $\mathcal{T}$, the maximum number of weak isomorphism classes of $\mathcal{T}$-representations. This will be the dominant term in the estimate of $T_w(G)$. 
Every edge of $G$ corresponds to at most $2n$ pseudochords in the $\mathcal{T}$-representation. Hence the $\mathcal{T}$-representation has at most $2mn$ pseudochords, with at most ${4mn \choose 8n}(4mn)!\le 2^{O(mn\log n)}$ different perimetric orders. This gives a trivial $2^{O(mn\log n)}$ upper bound on the number of weak isomorphism classes of $\mathcal{T}$-representations.

To determine the weak isomorphism class, we do not need the whole information given by the perimetric order. In fact, we only need to know the number of pseudochords corresponding to each edge of $G$ and the {\em type\/} of each pseudochord~\cite{K10_simple_real}, which we define in the next paragraph. There are at most $(2n)^m\le 2^{O(m\log n)}$ choices of the numbers of pseudochords corresponding to the edges of $G$ in the $\mathcal{T}$-representation. This upper bound is asymptotically dominated by the upper bounds in Theorem~\ref{veta_hlavni}, hence we consider these numbers fixed in the rest of this section.

The {\em type\/} $t(p)$ of a pseudochord $p$ 
is the pair $(X,Y)$ where each of $X,Y$ is either an edge of the polygon $D$ containing the endpoint of $p$ or an endpoint of $p$ on the arc $g_w$ for some vertex $w$ of $D$. For each vertex $w$ of $D$ representing a vertex $v$ of $G$, we consider $\deg(v)$ points on $g_w$ as possible values of $X$ and $Y$. For each triple of vertices $w_1,w_2,w_3$ of $D$ representing a crossing $x$ of $\mathcal{G}$, we have exactly one possible endpoint as a possible value of $X$ and $Y$, on exactly one of the arcs $g_{w_1},g_{w_2},g_{w_3}$. This follows from the fact that $\mathcal{T}$ contains exactly three portions of edges incident with $x$ and only the fourth portion becomes a pseudochord.

Let $p$ and $p'$ be pseudochords with types $(X,Y)$ and $(X',Y')$, respectively. We say that the types $(X,Y)$ and $(X',Y')$ are 
\begin{description}
\item {\em crossing\/} if the elements $X,X',Y,Y'$ are pairwise distinct and their cyclic order around the boundary of $D$ is $(X,X',Y,Y')$ or $(X,Y',Y,X')$,
\item {\em avoiding\/} if they are not crossing and all the elements $X,X',Y,Y'$ are pairwise distinct,
\item {\em parallel\/} if $(X,Y)=(X',Y')$ or $(X,Y)=(Y',X')$, and 
\item {\em adjacent\/} otherwise, that is, if exactly one of the following four equalities holds: $X=X'$, $X=Y'$, $Y=X'$ or $Y=Y'$.
\end{description}

See Figure~\ref{obr_10_typy} for examples.

\begin{figure}
\begin{center}
\epsfbox{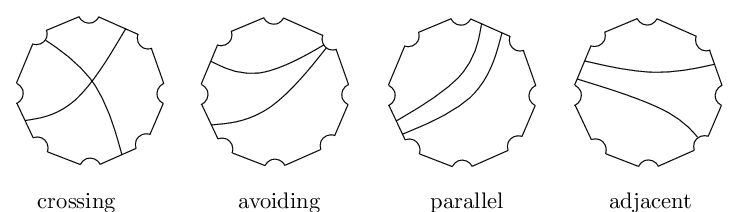}
\end{center}
\caption{Four categories of pairs of types of pseudochords.}
\label{obr_10_typy}
\end{figure}

If the elements $X,Y,X',Y'$ are pairwise distinct, we can directly determine whether $p$ and $p'$ cross: crossing types imply crossing pseudochords and avoiding types imply disjoint pseudochords. If, for example, $X=X'$ (in which case $X$ is an edge of $D$), we cannot determine whether $p$ and $p'$ cross, since this depends on the relative position of the endpoints of $p$ and $p'$ on $X$. The pairs of pseudochords with parallel and adjacent types can be arranged into maximal sequences, called {\em ladders}, formed by portions of two edges of $G$, for which we can determine whether they cross or not. See~\cite{K10_simple_real} for details.

A pseudochord is called {\em external\/} if it represents the initial or the terminal portion of an edge of $G$. Thus, at least one of the endpoints of an external pseudochord lies on one of the arcs $g_w$ where $w$ is a vertex of $D$ representing a vertex of $G$. All the other pseudochords are called {\em internal}. Every external pseudochord can have one of $O((n+m)^2)$ possible types. Every internal pseudochord, representing an internal portion of an edge of $G$, can have only $O(n^2)$ different types, since for the variables $X,Y$, we are considering only edges of $D$ and points on the arcs $g_w$, where $w$ is a vertex of $D$ representing a crossing of $\mathcal{G}$. Altogether, there are at most $(O(n+m)^{4m})\le 2^{O(m\log n)}$ combinations of types of the external pseudochords. This is again asymptotically dominated by the upper bounds in Theorem~\ref{veta_hlavni}. In the rest of this section, we consider only internal pseudochords. For a subset $F\subseteq E$ of edges of $G$, let $f(F)$ be the number of possible combinations of types of the internal pseudochords corresponding to the edges from $F$. Similarly, for a set $S$ of internal pseudochords, let $f(S)$ be the number of possible combinations of types of pseudochords from $S$. Our goal is to obtain a good upper bound on $f(E)$.

A trivial estimate gives the upper bound $f(E)\le O(n^2)^{mn}=2^{O(mn\log n)}$. This can be improved by considering the fact that the pseudochords representing a common edge of $G$ do not cross. Also note that for two pseudochords $p,p'$ representing a common edge $e$, their types 
are always avoiding. It follows that the set of types of the pseudochords representing $e$ can be represented as a noncrossing matching of size at most $2n$ on a set of at most $8n$ points in convex position, where each point corresponds to an edge or a vertex of $D$. Observe that the order of the pseudochords along $e$ can be reconstructed from this matching, thus this representation is injective. The number of such matchings is bounded from above by $2^{O(n)}$. 
Together, this gives the upper bound $f(E)\le 2^{O(nm)}$.

This estimate can be improved even further. In a simple topological graph, edges incident to a common vertex $v$ do not cross. Therefore, all the internal pseudochords representing edges incident with $v$ are pairwise disjoint. Let $P(v)$ be the set of these pseudochords. Note that two pseudochords from $P(v)$ representing different edges may have avoiding, parallel or adjacent types. Let $d$ be the degree of $v$. Similarly as before, we can represent the set of types of the pseudochords from $P(v)$ as a noncrossing matching $M$ on a set of at most $8dn$ points in convex position, where each vertex of $D$ is represented by a point and each edge of $D$ is represented by $d$ consecutive points. Again, from the matching $M$ and from the types of the external pseudochords representing the edges incident with $v$ we can uniquely determine which edge each pseudochords represents and how the pseudochords connect together to form the (portions of) edges incident with $v$. A straightforward upper bound $f(P(v))\le 2^{O(dn)}$ follows. To get a better upper bound, we observe that many of these pseudochords share the same type. More precisely, we have up to $2dn$ pseudochords in $P(v)$, but only $O(n)$ different types, since no two of the types are crossing. There are $2^{O(n)}$ ways of choosing the set of pairwise noncrossing types for internal pseudochords. For a fixed set $S$ of $O(n)$ types, we assign to each type $t\in S$ its {\em weight}, that is, a positive integer $n(t)$ denoting the number of pseudochords from $P(v)$ with type $t$. The set $\{n(t), t\in S\}$ satisfying the property $\sum_{t\in S} n(t)=|P(v)|$ is called the {\em weight vector\/} of $S$. From the set $S$ and its weight vector, we can reconstruct the matching $M$ and determine the type of each pseudochord and how the pseudochords connect to edges. This idea is similar to encoding curves on a surface using normal coordinates~\cite{SSS02_normal,SSS03_recognizing}. For a fixed $S$, there are ${O(dn)\choose O(n)}=d^{O(n)}=2^{O(n\log d)}$ different weight vectors. This gives the upper bound $f(P(v))\le 2^{O(n\log d)}$. By Jensen's inequality, $f(E)\le 2^{O(n^2\log (m/n))}$. 
Together with Lemma~\ref{lemma_trees}, this gives the first upper bound in Theorem~\ref{veta_hlavni}.

The previous method gives a good upper bound on $T_w(G)$ for dense graphs. For graphs with $o(n^2)$ edges, the method is useful if the graph has very irregular degree sequence; more precisely, if it has a small number of vertices covering almost all the edges. For graphs with $o(n^{3/2})$ edges and with most of the vertices of degree $\Theta(m/n)$, we get better results by considering larger subsets of edges. We just need to balance the number of edges in the subset to keep the number of their crossings small enough.

\begin{lemma}\label{lemma_ktice_hran}
Let $F\subseteq E$ be a set of $k$ edges. Then
$$f(F)\le {O(m+k^2) \choose O(k^2)}\cdot 2^{O(k^2\log k)} \cdot 2^{O(n+k^2)} \cdot {kn\choose O(n+k^2)}.$$
In particular, for $k=\lfloor \sqrt{n} \rfloor$ we have
$$f(F)\le 2^{O(n\log n)}.$$
\end{lemma}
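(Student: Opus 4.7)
The plan is to generalize the weight-vector encoding used in Subsection~\ref{sub_pocitani_T_reprezentaci} for the pseudochords $P(v)$ around a single vertex to the set $P(F)$ of internal pseudochords corresponding to an arbitrary set $F$ of $k$ edges. The crucial structural input will be that, in a simple topological graph, each pair of edges crosses at most once, so among the at most $2kn$ pseudochords in $P(F)$, at most $\binom{k}{2}=O(k^2)$ pairs cross.

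First I would select a set $C$ of at most $O(k^2)$ \emph{exceptional} pseudochords containing one pseudochord from every crossing pair within $P(F)$. The pseudochords in $P(F)\setminus C$ are then pairwise non-crossing, so their types form a non-crossing family on the $O(n)$ boundary vertices of the polygon $D$; any such family has size $O(n)$, and there are at most $2^{O(n)}$ such families. Adding the types of the $O(k^2)$ exceptional pseudochords, the full set $S$ of types realized in $P(F)$ satisfies $|S|\le O(n+k^2)$, and can be chosen in at most $2^{O(n+k^2)}$ ways.

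I would then encode the full combination of types by four pieces of data: (i) the set $S$ itself, contributing $2^{O(n+k^2)}$ choices; (ii) a weight vector specifying how many pseudochords of $P(F)$ carry each type of $S$, contributing $\binom{kn}{O(n+k^2)}$ choices; (iii) the locations of the $O(k^2)$ exceptional pseudochords among the $O(m+k^2)$ possible positions along the sequences of pseudochords of the individual edges of $F$, contributing $\binom{O(m+k^2)}{O(k^2)}$; and (iv) for each exceptional pseudochord, the labeled pair of $F$-edges whose pseudochords cross there, giving $(k^2)^{O(k^2)}=2^{O(k^2\log k)}$ choices. In analogy with the single-vertex analysis, this data, together with the already-fixed types of the external pseudochords, should uniquely recover which pseudochord belongs to which edge of $F$ and its order along that edge, hence the whole type-combination. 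Multiplying the four factors gives the asserted bound on $f(F)$; substituting $k=\lfloor\sqrt n\rfloor$ and using that $m$ is polynomial in $n$ in the regime where the lemma is invoked in Theorem~\ref{veta_hlavni}, each of the four factors is at most $2^{O(n\log n)}$.

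The main obstacle will be the reconstruction step: one must verify that the data in (ii)--(iv), supplemented by the external pseudochord types, pins down which pseudochord sits at which slot of which edge of $F$. This is the direct analogue of the observation in the single-vertex case that the noncrossing matching of types together with the external pseudochord types determines the full edge-to-pseudochord assignment, and it will likely require a careful case analysis of how exceptional pseudochords interact with the ladders of parallel and adjacent types formed by the non-exceptional ones.
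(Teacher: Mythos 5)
Your proposal is close in spirit to the paper's proof and achieves the same four factors, but the encoding you choose is not the one the paper uses, and as written it does not support the reconstruction step that you yourself flag as the main obstacle. Two concrete issues.

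First, you take $C$ to be a \emph{hitting set}: one pseudochord from each crossing pair. The paper instead takes $P_1(F)$ to be \emph{every} pseudochord of $P(F)$ that participates in some crossing with $P(F)$ (still $O(k^2)$ many, since there are at most $\binom{k}{2}$ crossings), and then defines $P_0(F)$ to be the internal pseudochords of $P(F)\setminus P_1(F)$. The point of this choice is that $P_0(F)$ is non-crossing not only internally but also against everything in $P_1(F)$, so once $P_1(F)$ is pinned down the members of $P_0(F)$ live in fixed cells of $D\setminus\bigl(\bigcup P_1(F)\bigr)$. With your hitting set, a non-exceptional pseudochord can still cross an exceptional one, so the complement is not a non-crossing family inside a fixed cell structure; the single-vertex normal-coordinate argument you want to invoke does not apply to it directly.

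Second, and more fundamentally, your items (iii) and (iv) do not carry the information the reconstruction needs. Item (iii) records positions of exceptional pseudochords \emph{within the sequences of pseudochords of each edge of $F$}, and (iv) records which labeled pair of $F$-edges is involved at each crossing. Neither of these determines the actual positions of the exceptional endpoints on the boundary of $D$. But those boundary positions are exactly what pins down the cell decomposition in which the remaining pseudochords sit; without them, distinct type combinations can share your entire tuple (i)--(iv). In the toy case of two edges $e,e'$ crossing once, your data says ``the $i$-th pseudochord of $e$ is exceptional and its crossing partner is an $e'$-pseudochord,'' but does not say which $e'$-pseudochord, nor where on $\partial D$ the crossing pair lies. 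The paper spends $\binom{O(m+k^2)}{2k^2}$ precisely on choosing the boundary endpoint \emph{positions} of $P_1(F)$ and $(k^2)!\le 2^{O(k^2\log k)}$ on the matching that pairs them into pseudochords; it then refines the types of $P_0(F)$ into \emph{subtypes} by splitting each edge of $D$ at the $P_1(F)$-endpoints (giving $O(n+k^2)$ arcs and hence $2^{O(n+k^2)}$ non-crossing subtype families), and finally uses a weight vector of total mass at most $kn$ on those subtypes, giving $\binom{kn}{O(n+k^2)}$. So the paper's four factors are boundary-positions, matching, subtypes, weights; your four are types, weights, edge-sequence-locations, crossing-pairs. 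The first two of the paper's factors cannot be replaced by your (iii), (iv) without losing injectivity of the encoding. Replacing $C$ by the full $P_1(F)$, recording boundary positions rather than sequence positions, and passing to subtypes is what closes the gap.
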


\begin{proof}
Let $P(F)$ be the set of (both external and internal) pseudochords representing the edges of $F$. Since every two edges cross at most once, there are at most ${k \choose 2}$ crossings among the pseudochords from $P(F)$. In particular, at most $k^2$ pseudochords from $P(F)$ cross other pseudochord from $P(F)$. Let $P_1(F)\subseteq P(F)$ be the set of pseudochords crossing at least one pseudochord from $P(F)$. Let $P_0(F)$ be the set of internal pseudochords from $P(F)\setminus P_1(F)$. We estimate the number of perimetric orders of $|P_0(F) \cup P_1(F)|$ pseudochords in $D$ inducing at most ${k \choose 2}$ crossings. Each such perimetric order, together with the set of types of the external pseudochords from 
$P(F)$, determine the types of all pseudochords from $P(F)$, since no member of $P(F) \setminus P_1(F)$  crosses a member of $P_0(F) \cup P_1(F)$.

For the pseudochords from $P_1(F)$, we have at most ${O(m+k^2) \choose 2k^2}$ ways of choosing the set of their endpoints on the boundary of $D$, and at most $(k^2)!\le 2^{O(k^2\log k)}$ ways of matching them together. Here we do not need to optimize for matchings inducing $O(k^2)$ crossings. However,  Proposition~\ref{prop_chord_diagrams} in the next section implies the upper bound $2^{O(k^2)}$. 

The pseudochords from $P_0(F)$ form a noncrossing matching in the regions of $D\setminus (\bigcup P_1(F))$. To determine the positions of the pseudochords from $P_0(F)$, we need to refine their types into {\em subtypes\/} by splitting the edges of $D$ by the endpoints of the pseudochords from $P_1(F)$. See Figure~\ref{obr_11_subtypy}. There are at most $O(n+k^2)$ portions of edges of $D$ after this splitting, hence at most $2^{O(n+k^2)}$ choices for the set of pairwise noncrossing subtypes of the pseudochords from $P_0(F)$. Finally, there are at most ${kn\choose O(n+k^2)}$ ways of assigning a vector of positive integers with total sum at most $kn$ to the chosen set of subtypes. This is sufficient to determine the perimetric order of the pseudochords from $P(F)$ and the lemma follows.
\end{proof}

\begin{figure}
\begin{center}
\epsfbox{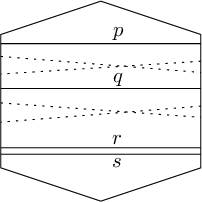}
\end{center}
\caption{Illustration for the proof of Lemma~\ref{lemma_ktice_hran}. The dotted lines represent pseudochords from $P_1(F)$. The pseudochords $p,q,r,s$ all have the same type, $r$ and $s$ also have the same subtype, but $p,q$ and $r$ have pairwise different subtypes.}
\label{obr_11_subtypy}
\end{figure}

The second upper bound in Theorem~\ref{veta_hlavni} is proved as follows. By Lemma~\ref{lemma_trees}, we may fix a topological spanning tree. Then we partition the edge set of $G$ into $O(m/\sqrt{n})$ subsets of size at most $\sqrt{n}$ and apply Lemma~\ref{lemma_ktice_hran} to each of the subsets. 
Theorem~\ref{veta_pseudosegmenty} is a special case of Theorem~\ref{veta_hlavni}, where the graph $G$ is a matching.


\section{The upper bound in Theorem~\ref{veta_neizo}}\label{section_horni_strong}

We start with some additional definitions and a combinatorial definition of the isomorphism of topological graphs. Then we show that we need to consider only topologically connected topological graphs. Finally, we reduce the problem to counting isomorphism classes of arrangements of pseudochords and present two different solutions to this problem. In the first solution we split the problem into two parts: enumerating chord diagrams and enumerating arrangements with fixed boundary, using encoding by binary vectors. The second approach is based on enumerating the dual graphs of the arrangements, which form a subclass of quadrangulations of a disc.  

\subsection{A combinatorial definition of isomorphism}\label{sub_5_1_combinatorial_izomorphism}

A {\em rotation\/} of a crossing $c$ in a topological graph is the clockwise cyclic order in which the four portions of the two edges crossing at $c$ leave the point $c$. Note that each crossing has exactly two possible rotations.
An {\em extended rotation system\/} of a simple topological graph is the set of rotations of all its vertices and crossings. 
Assuming that $T$ and $T'$ are drawings of the same abstract graph, we say that their (extended) rotation systems are {\em inverse\/} if for each vertex $v \in V(T)$ (and each crossing $c$ in $T$) the rotation of $v$ and the rotation of the corresponding vertex $v' \in V(T')$ are inverse cyclic permutations (and so are the rotation of $c$ and the rotation of the corresponding crossing $c'$ in $T'$). For example, if $T'$ is a mirror image of $T$, then $T$ and $T'$ have inverse (extended) rotation systems. 

Topologically connected topological graphs $G$ and $H$ are {\em isomorphic\/} if (1) $G$ and $H$ are weakly isomorphic, (2) for each edge $e$ of $G$ the order of crossings with the other edges of $G$ is the same as the order of crossings on the corresponding edge $e'$ in $H$, and (3) the extended rotation systems of $G$ and $H$ are either the same or inverse. This induces a one-to-one correspondence between the faces of $G$ and $H$ such that the crossings and the vertices incident with a face $f$ of $G$ appear along the boundary of $f$ in the same (or inverse) cyclic order as the corresponding crossings and vertices in $H$ appear along the boundary of the face $f'$ corresponding to $f$. It follows from Jordan--Sch\"onflies theorem that this definition is equivalent to the previous one in Section~\ref{section_intro}.

Let $G$ be a topological graph with more than one topological component. The {\em face structure\/} of $G$ is a collection of face boundaries, represented as oriented facial walks in the underlying abstract graph, of all noncontractible faces of $G$, that is, faces with more than one boundary component. The orientations are chosen in such a way that either for each noncontractible face the facial walk of the outer boundary component is oriented clockwise and the facial walks of all inner boundary components are oriented counter-clockwise, or vice versa. Both choices are regarded as giving the same face structure. By this condition, the orientations of the facial walks in the face structure encode relative orientations of the topological components. Note that the rotation system of $G$ is not sufficient to determine the orientation of topological components that are simple cycles.

Topological graphs $G$ and $H$ with more than one topological component are {\em isomorphic\/} if there is a one-to-one mapping between the vertices and edges of $G$ and $H$ satisfying the conditions (1)--(3) and, in addition, (4) the face structures of $G$ and $H$ are the same.


\subsection{Reduction to topologically connected graphs}\label{sub_5_2_topologically_connected}

Let $G$ be a graph with no isolated vertices.
Let $\mathcal{G}$ be a topological graph realizing $G$. If $\mathcal{G}$ has more than one topological component, we want to extend it to a topologically connected graph by adding edges connecting the topological components, in the same way as in the previous section. However, for this extension to be possible we may need to rearrange the topological components of $\mathcal{G}$, which changes the face structure of $\mathcal{G}$. While preserving the isomorphism classes of the $k$ topological components of $\mathcal{G}$, there are $2^k$ ways of choosing their orientation and at most $O(n^4)^{2k}$ possible face structures of topological graphs built from these components. Thus there are at most $2^{O(n\log n)}$ rearrangements of topological components of $\mathcal{G}$. Hence, by the same argument as in the previous section, we may further assume that $\mathcal{G}$ is topologically connected.


\subsection{Arrangements of pseudochords}
An essential part of the structure of a particular isomorphism class of simple topological graphs is captured by the following combinatorial object, which slightly generalizes arrangements of pseudolines.

An {\em arrangement of pseudochords\/} is a finite set $M$ of simple curves in the plane with endpoints on a common simple closed curve $C_M$, such that all the curves from $M$ lie in the region bounded by $C_M$ and every two curves in $M$ have at most one common point, which is a proper crossing. The elements of $M$ are called {\em pseudochords}. The arrangement $M$ is {\em simple\/} if no three pseudochords from $M$ share a common crossing. The {\em perimetric order\/} of $M$ is the counter-clockwise cyclic order of the endpoints of the pseudochords of $M$ on $C_M$.
The perimetric order of $M$ determines which pairs of pseudochords cross and which do not, but it does not determine the orders of crossings on the pseudochords. Two (labeled) arrangements of pseudochords are {\em isomorphic\/} if they have the same perimetric order and the same orders of crossings on the corresponding pseudochords. Equivalently, one arrangement can be obtained from the other one by an orientation preserving homeomorphism. Note that a $\mathcal{T}$-representation of a simple topological graph can be regarded as a simple arrangement of pseudochords.

The following proposition is inspired by Felsner's~\cite{F97_number} enumeration of simple wiring diagrams. Originally it appeared in~\cite{K09_enumeration} as Proposition 7, but in an incorrect, stronger form.

\begin{proposition}\label{prop_2na2k_chords}{\rm\cite[a correct form of Proposition 7]{K09_enumeration}}
The number of isomorphism classes of simple arrangements of $n$ pseudochords with fixed perimetric order inducing $k$ crossings is at most $2^{2k}$.
\end{proposition}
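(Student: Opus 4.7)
\medskip
\noindent\textbf{Proof plan.}
The plan is to encode each isomorphism class by a binary string of length $2k$ via a sweep argument, so that different arrangements yield different strings. Since the perimetric order is fixed, the combinatorial data missing from the arrangement consists exactly of the order of crossings along each pseudochord; equivalently, the time-order in which crossings are passed by a sweep.

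First I would fix a base point $x_0$ on $C_M$ that is not an endpoint of any pseudochord, and continuously expand a simple sweep curve $\gamma_t$ from $x_0$ back to $x_0$ so that the region swept out grows from the empty set to the full disc bounded by $C_M$. At every time $t$, the state $S_t$ is the sequence of pseudochords intersected by $\gamma_t$, read along $\gamma_t$. The state changes only at two kinds of events: type (A) where an endpoint on $C_M$ is swept (a pseudochord enters or leaves $S_t$), and type (B) where a crossing is swept (two adjacent pseudochords in $S_t$ swap). The order and nature of all type-(A) events is completely determined by the fixed perimetric order, so all that remains to encode is the sequence of type-(B) events.

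Next I would break the sweep into ``stages'' delimited by type-(A) events and, within each stage, encode the swaps that occur. Each crossing corresponds to exactly one swap, so there are exactly $k$ swaps to record in total. The key step is to show that the cumulative information needed to reconstruct these $k$ swaps, across all stages, is at most $2k$ bits; equivalently, that at each crossing one has at most four locally distinguishable choices. The natural way is to associate to each crossing $c$, at the moment it is swept, a pair of bits: one bit choosing between the two pseudochords entering the crossing from the ``earlier'' side of $\gamma_t$, and one bit recording on which side of the opposing pseudochord $c$ lies relative to already-committed crossings. The argument that these two bits together with the string for the previous $k-1$ crossings determine the current arrangement up to isomorphism is an induction on $k$ using a standard exchange/normal-form argument: if two arrangements agree on the first $k-1$ sweep choices but differ at the $k$-th, their pseudochord incidence patterns around $c$ must differ in one of at most four locally distinguishable ways.

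\medskip
\noindent\textbf{Main obstacle.}
The delicate point is not the sweep itself but the verification that exactly $2$ bits per crossing suffice; the earlier (incorrect) form of this proposition in~\cite{K09_enumeration} reportedly claimed a smaller bound or a cleaner bijection that breaks precisely here. One must be careful that a single crossing does not, by its interaction with later swaps in the same stage, force more than four distinguishable local configurations, and that no ``global'' information beyond the per-crossing bits is needed for reconstruction. I would therefore expect the technical core to be a case analysis around each crossing event showing that, once the perimetric order and the bit-strings for the previously swept crossings are fixed, each admissible continuation of the sweep falls into at most four local classes — after which the bound $2^{2k}$ follows immediately by multiplying the contributions over all $k$ crossings.
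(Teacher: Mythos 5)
Your plan is a genuinely different route from the paper's, and as stated it has a real gap. The paper does not run a sweep at all: it directly assigns to each pseudochord $p_i$ a bit vector $\alpha^i$, with one bit per crossing on $p_i$ recording whether the crossing partner's left endpoint lies to the left or right of $a_i$. Since every crossing appears on two pseudochords, this uses exactly $2k$ bits, and the bits are \emph{static} — they are determined by the perimetric order and the crossing-orders, not chosen dynamically as a sweep advances. The substance of the paper's proof is then an injectivity argument: order the pseudochords by left endpoint, observe $\alpha^1=(1,\dots,1)$ and $\alpha^n=(0,\dots,0)$, find a consecutive pair $p_s,p_{s+1}$ with $\alpha^s_1=1$, $\alpha^{s+1}_1=0$, prove that these two must share their first crossing, peel that crossing off, and recurse on $k-1$ crossings (with a separate base case handling empty pseudochords by splitting into inner and outer subarrangements).

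The gap in your sweep version is the unverified core claim that each type-(B) event carries at most two bits of information. Within a single stage (between two consecutive type-(A) events) the state $S_t$ may contain many pseudochords, and a swap can in principle occur at any of the adjacent pairs in $S_t$; choosing which pair swaps next is not a binary choice, and the number of admissible swap orderings inside one stage is not a priori bounded by $2^{(\text{number of swaps in the stage})}$. Your first bit (``choosing between the two pseudochords entering the crossing from the earlier side'') presupposes you already know which two pseudochords cross, which is exactly the datum you must encode; your second bit is not given a precise meaning. You correctly flag that this is the delicate point, but the ``standard exchange/normal-form argument'' you gesture at is the entire content of the proposition, and without it the bound $2^{2k}$ does not follow. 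If you want to rescue the sweep idea, you would essentially need to reproduce the paper's structure: notice that the leftmost forced swap is the one between the consecutive-by-left-endpoint pair $p_s,p_{s+1}$ detected by the signs $\alpha^s_1=1,\alpha^{s+1}_1=0$, commit that swap, and recurse — at which point the sweep language is doing no work and you are back to the paper's induction.
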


\begin{proof}
Let $M=\{p_1,p_2,\dots,p_n\}$ be a simple arrangement of pseudochords with endpoints on a circle $C_M$ and with a given perimetric order. Cut the circle at an arbitrary point and unfold it by a homeomorphism to a horizontal line $l$, while keeping all the pseudochords above $l$. Orient each pseudochord $p_i$ from its left endpoint $a_i$ to its right endpoint $b_i$.
Let $k_i$ be the number of crossings on $p_i$ and let $c^i_1,c^i_2,\dots,c^i_{k_i}$ be the crossings of $p_i$ ordered from $a_i$ to $b_i$. Let $p_{r(i,j)}$ be the pseudochord that crosses $p_i$ at $c^i_j$.

For two crossing pseudochords $p_i$ and $p_j$ we 
say that $p_i$ {\em is to the left of} $p_j$ if $a_i$ is to the left of $a_j$. This is equivalent with the rotation of their common crossing being $(a_i,b_j,a_j,b_i)$.

To each $p_i$ we assign a vector $\alpha^i=(\alpha^i_1, \alpha^i_2, \dots,\alpha^i_{k_i}) \in \{0,1\}^{k_i}$ where $\alpha^i_j=0$ if $p_{r(i,j)}$ is to the left of $p_i$ and $\alpha^i_j=1$ if $p_i$ is to the left of $p_{r(i,j)}$.

The sum of the lengths of the vectors $\alpha^i$ is equal to $\sum_{i=1}^n k_i=2k$. Hence, there are at most $2^{2k}$ different sequences $(\alpha^1,\alpha^2,\dots,\alpha^n)$ encoding an arrangement with the given perimetric order and the chosen orientation of pseudochords.

It remains to show that we can uniquely reconstruct the isomorphism class of $M$ from the vectors $\alpha^1,\alpha^2,\dots,\alpha^n$ by identifying the pseudochords $p_{r(i,j)}$.
We proceed by induction on $k$ and $n$. For arrangements without crossings there is only one isomorphism class with a fixed perimetric order. Now, suppose that we can reconstruct the isomorphism class for arrangements with at most $k-1$ crossings and take a sequence $\alpha=(\alpha^1,\alpha^2,\dots,\alpha^n)$ encoding an arrangement $M$ with $k$ crossings.

If some of the vectors $\alpha^i$ is empty, the corresponding pseudochord $p_i$ is empty (has no crossing). We may then draw $p_i$ as an arbitrary curve $\gamma_i$ from $a_i$ to $b_i$ in the upper half-plane of $l$. Then we split the arrangement into two parts: the {\em inner\/} part consisting of pseudochords with endpoints between $a_i$ and $b_i$, and the {\em outer\/} part with endpoints to the left of $a_i$ or to the right of $b_i$. We draw both parts separately by induction. Finally, by applying a suitable homeomorphism we place the inner part inside the region bounded by $\gamma_i$ and $l$ and the outer part outside that region.

Further we assume that $M$ has no empty pseudochords.

Without loss of generality we may assume that the left endpoints are ordered along $l$ as $a_1,a_2,\dots,a_n$ from left to right. Clearly, $\alpha^{1}=(1,1,\dots,1)$ and $\alpha^{n}=(0,0,\dots,0)$. It follows that there exists $s\in\{1,\dots,n-1\}$ such that $\alpha^{s}_1=1$ and $\alpha^{s+1}_1=0$. 

\begin{figure}
\begin{center}
\epsfbox{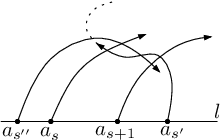}
\end{center}
\caption{$p_{s'}$ cannot be the first pseudochord crossing $p_s$.}
\label{obr_12_vynuceni}
\end{figure}

\begin{claim_}
The first crossing on the pseudochords $p_s$ and $p_{s+1}$ is their common crossing. That is, $r(s,1)=s+1$ and $r(s+1,1)=s$.
\end{claim_}

\begin{proof}[Proof of claim.]
Refer to Figure~\ref{obr_12_vynuceni}. For contradiction, suppose that $r(s,1)=s'\ge s+2$ (the case when $r(s+1,1)\le s-1$ is symmetric). Then $r(s+1,1)\notin\{s,s'\}$. Hence, $r(s+1,1)=s''$ for some $s''<s$ and the crossing of $p_{s+1}$ with $p_{s''}$ occurs within the triangle $a_sa_{s+1}c^s_1$. This forces the pseudochords $p_{s'}$ and $p_{s''}$ to cross twice, a contradiction.
\end{proof} 

Let $c=c^{s}_1=c^{s+1}_1$ be the first crossing on $p_s$ and $p_{s+1}$. Since the two arcs $a_sc$ and $a_{s+1}c$ are free of crossings, there is no endpoint between $a_s$ and $a_{s+1}$ on $l$. For the induction step, we swap the endpoints $a_{s}$ and $a_{s+1}$ in the perimetric order of $M$ and delete the first value from the vectors $\alpha^{s}$ and $\alpha^{s+1}$. In this way we obtain an encoding $\alpha'$ of an arrangement $M'$ with $k-1$ crossings, which is obtained from $M$ by deleting the arcs $a_sc$ and $a_{s+1}c$, including a small open neighborhood of $c$.
By the induction hypothesis, the isomorphism class of $M'$ can be uniquely reconstructed from $\alpha'$. By attaching to $M'$ two crossing arcs starting at $a_s$ and $a_{s+1}$ and thus extending the two pseudochords $p_s$ and $p_{s+1}$, we obtain an arrangement isomorphic to $M$. 
\end{proof}    


\subsection[Topologically connected topological graphs]{Counting isomorphism classes of topologically connected topological graphs}

Let $G$ be a graph with $n$ vertices, $m$ edges and no isolated vertices.
Let $\mathcal{G}$ be a topologically connected simple topological graph that realizes $G$.
The isomorphism class of $\mathcal{G}$ is determined by the isomorphism class of a $\mathcal{T}$-representation of $\mathcal{G}$. To determine the isomorphism class of a $\mathcal{T}$-representation, we need to determine (1) the topological spanning tree $\mathcal{T}$, (2) the perimetric order of the $\mathcal{T}$-representation, and (3) the isomorphism type of the induced arrangement of pseudochords.

(1) By Lemma~\ref{lemma_trees}, there are at most $2^{O(n\log n)}$ choices for the topological spanning tree $\mathcal{T}$ of $\mathcal{G}$, up to a homeomorphism of the plane. For the rest of the section, we fix one topological spanning tree $\mathcal{T}$ of $\mathcal{G}$.

(2) With $\mathcal{T}$ fixed, a $\mathcal{T}$-representation can have at most $2^{O(mn\log n)}$ diferent perimetric orders, as we have seen in Subsection~\ref{sub_pocitani_T_reprezentaci}.

This estimate is good enough when $G$ has $m= \omega(n\log n)$ edges, but we need a better upper bound for sparser graphs. This can be achieved by counting only perimetric orders that induce at most ${m \choose 2}$ crossings.

There are at most ${4mn \choose 8n}\le 2^{O(n\log n)}$ ways of choosing the set of endpoints of the pseudochords along the boundary of the disc $D$ in the $\mathcal{T}$-representation. To determine the perimetric order, we need, in addition, to determine a perfect matching of the endpoints inducing at most ${m \choose 2}$ crossings.

Such matchings can be also regarded as representations of {\em circle graphs\/} with a given number of vertices and edges. In the literature, these structures are called {\em chord diagrams}~\cite{K00_chord_diagrams_vsechny,R79_chord_intersection}. See Figure~\ref{obr_12_2_sawtooth}, left. Following the notation in~\cite{R79_chord_intersection}, let $C(n,k)$ denote the number of diagrams of $n$ chords with $k$ crossings. It is well known that $C(n,0)$, which is the number of noncrossing perfect matchings of $2n$ points on the circle, is equal to the $n$th Catalan number. Precise enumeration results for $C(n,k)$ in the form of generating functions were obtained by Touchard~\cite{T50_contribution} and Riordan~\cite{R75_distribution}, but explicit formulas for $C(n,k)$ were computed only for $k\le 6$~\cite{T50_contribution}.
The following asymptotic upper bound is implicit in Read's paper~\cite{R79_chord_intersection}.

\begin{proposition}{\rm\cite{R79_chord_intersection}}\label{prop_chord_diagrams}
For the number of diagrams of $n$ chords with at most $k$ crossings, we have the upper bound
$$\sum_{i=0}^k C(n,i) \le C(n){n+k \choose n}$$            
where $C(n)$ is the $n$th Catalan number.
\end{proposition}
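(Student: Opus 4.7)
The plan is to use the standard bijective encoding of chord diagrams as labeled Dyck paths (sometimes called Hermite histories). Starting from a chord diagram with $n$ chords, I would cut the supporting circle at an arbitrary point between two endpoints and read the $2n$ endpoints from left to right: each opening endpoint is an up-step and each closing endpoint is a down-step, yielding a Dyck path $P$ of semilength $n$. At the $r$-th down-step, if there are $h_r$ currently open chords, I would additionally record the index $j_r \in \{1,\dots,h_r\}$ of the chord being closed, counted from the most recently opened; equivalently, set $s_r := j_r - 1 \in \{0,\dots,h_r-1\}$. This yields a bijection between chord diagrams of $n$ chords and pairs $(P,(s_1,\dots,s_n))$ satisfying $0 \le s_r < h_r$ for every $r$.

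The key step is the crossing-count identity: the number of crossings of the reconstructed chord diagram equals $\sum_{r=1}^n s_r$. Indeed, when the $r$-th down-step closes the $j_r$-th most recently opened chord, the $j_r - 1$ chords opened after it are still open at this moment and therefore close later, so each of them crosses the closing chord exactly once; conversely every crossing of the diagram is accounted for at the closing step of its later-closing chord. Consequently, chord diagrams with at most $k$ crossings correspond bijectively to pairs $(P,(s_1,\dots,s_n))$ subject to the additional constraint $\sum_{r} s_r \le k$.

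To conclude, for each fixed Dyck path $P$, the number of $n$-tuples of non-negative integers with $\sum_r s_r \le k$ is $\binom{n+k}{n}$ by stars and bars; dropping the side conditions $s_r < h_r$ only enlarges this count. Since there are $C(n)$ Dyck paths of semilength $n$, summing gives the desired inequality $\sum_{i=0}^k C(n,i) \le C(n)\binom{n+k}{n}$. The only genuine content is the crossing-count identity; the encoding and the final stars-and-bars estimate are routine, which is likely why the bound appears only implicitly in Read's paper. The bound is evidently quite loose, because discarding the constraints $s_r < h_r$ is wasteful unless most of the mass is concentrated on tuples with small coordinates.
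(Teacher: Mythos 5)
Your argument is correct and is essentially the same as the paper's: the Dyck path you extract is exactly the balanced parenthesization $(L,R)$ of Read's sawtooth encoding, and your sequence $(s_1,\dots,s_n)$ coincides with the paper's vector $\kappa=(k_1,\dots,k_n)$, since both record, for the chord closed at the $r$-th closing position, the number of chords opened after it that are still open (equivalently, cross it with later left endpoint). One small slip in the converse half of your crossing-count identity: a crossing is recorded at the closing step of the \emph{earlier}-closing chord of the pair, not the later-closing one (at that step it is the later-closing chord that gets counted among the $s_r$ still-open chords); this wording error does not affect the validity of the proof.
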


\begin{proof} Like in the proof of Proposition~\ref{prop_2na2k_chords}, the key ``trick'' is breaking the symmetry of the circle by cutting it at one point and unfolding onto a horizontal line $l$. The chords then become arcs in the upper half-plane with endpoints on $l$. Each such arc has a distinguished left endpoint and a right endpoint. Instead of arbitrary arcs, Read~\cite{R79_chord_intersection} constructs triangular ``teeth'' consisting of a diagonal segment from the left endpoint followed by a vertical segment to the right endpoint and calls the resulting drawing the {\em sawtooth diagram\/} associated to the original chord diagram. See Figure~\ref{obr_12_2_sawtooth}, right.

\begin{figure}
\begin{center}
\epsfbox{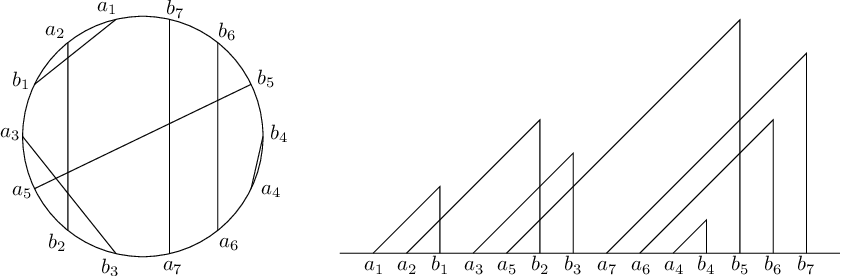} 
\end{center}
\caption{A chord diagram with seven chords and six crossings and a corresponding sawtooth diagram with $\kappa=(1,2,1,0,2,0,0)$.}
\label{obr_12_2_sawtooth}
\end{figure}

Let $L$ be the set of all the left endpoints of the chords on $l$, and $R$ the set of all the right endpoints. For every point $x$ on $l$, there are at least as many left endpoints than right endpoints to the left of $x$. Therefore the sets $L$ and $R$ correspond to the sets of $n$ left and $n$ right parentheses that are correctly matched. There are exactly $C(n)$ such partitions $(L,R)$ of the $2n$ points on $l$.

One partition $(L,R)$ can be shared by more sawtooth diagrams, if crossings are allowed. To determine the sawtooth diagram (and the corresponding chord diagram) uniquely, we encode the intersection graph of the chords as follows. 
Let $b_1,b_2,\dots,b_n$ be the points of $R$ ordered from left to right.
For $i=1,2,\dots,n$, let $c_i$ be the chord with right endpoint $b_i$, let $a_i$ be the left endpoint of $c_i$ and let $k_i$ be the number of chords with left endpoint to the right of $a_i$ that cross $c_i$. We claim that the vector $\kappa=(k_1, k_2, \dots, k_n)$, together with the partition $(L,R)$, uniquely determines the sawtooth diagram. This can be seen by drawing the diagram from left to right. All the crossings of the chord $c_i$ with chords with left endpoint to the right of $a_i$ occur on the vertical segment of $c_i$ with endpoint $b_i$. Therefore, every time we reach the $x$-coordinate of some $b_i$, we take the $(k_i+1)$th diagonal segment from the bottom and connect its right endpoint by a vertical line to $b_i$. All the other diagonal segments are extended further to the right.

Since $\sum_{i=1}^n k_i \le k$, for every partition $(L,R)$ there are at most ${n+k \choose k}$ possible vectors $\kappa$ and the proposition follows.
\end{proof}

By Proposition~\ref{prop_chord_diagrams}, by the entropy bound for binomial coefficients and by the inequality $\log_e(1+x)\le x$, the number of possible perimetric orders of the $\mathcal{T}$-represen\-tation is at most 
\begin{align*}
2^{O(n\log n)}\cdot C(2mn){2mn+{m \choose 2} \choose 2mn} &\le 2^{2mn \log(1+\frac{m}{4n})+\frac{m^2}{2}\log(1+\frac{4n}{m}) +4mn +O(n\log n)}\\
&\le 2^{2mn (\log(1+\frac{m}{4n})+2+\log_2 e) +O(n\log n)}.
\end{align*}

(3) By Proposition~\ref{prop_2na2k_chords}, there are less than $2^{m^2}$ isomorphism classes of simple arrangements of pseudochords induced by the $\mathcal{T}$-representation with a given perimetric order. Together with Proposition~\ref{prop_chord_diagrams} and previous discussion, this implies that
$$T(G) \le 2^{m^2+2mn(\log(1+\frac{m}{4n})+3.443) +O(n\log n)}.$$ 



For graphs with $m=O(n)$ the second term in the exponent becomes more significant. Since $m\ge n/2$, the exponent can be also bounded by 
$$m^2\cdot (1+8+4\log_2(9/8) + 1/2 \cdot \log_2 9)+o(m^2)\le 11.265 m^2+o(1),$$
using the entropy bound for the binomial coefficient ${4m^2+m^2/2 \choose 4m^2}$.
This proves the second upper bound in Theorem~\ref{veta_neizo}.


\subsubsection{Arrangements and quadrangulations}

Here we show an alternative approach to enumerating simple arrangements of pseudochords.

A {\em quadrangulation\/} of the disc $D$ is a $2$-connected plane graph embedded in $D$ such that its outer face coincides with the boundary of $D$ and every inner face is bounded by a $4$-cycle. A quadrangulation is called {\em simple\/} if it has no separating $4$-cycle. The vertices of the quadrangulation lying on the boundary of $D$ are called {\em external\/}, all the other vertices are {\em internal}.

Mullin and Schellenberg~\cite{MS68_c_nets_quadrangulations} proved that there are 
$$\frac{(3M+3)!(2N+M-1)!}{(M-1)!(2M+3)!N!(N+M+1)!} \le {3M+3\choose M}{2N+M-1\choose N}$$
isomorphism classes of rooted simple quadrangulations of the disc with $N$ internal and $2M+4$ external vertices. 

The {\em dual graph\/} of a simple arrangement of pseudochords is constructed as follows. Place one vertex inside each $2$-dimensional cell and one vertex in the interior of every boundary edge. Then join all pairs of vertices that correspond to adjacent $2$-cells or to a boundary edge and its adjacent $2$-cell. See Figure~\ref{obr_12_3_quadrangulation}.

\begin{figure}
\begin{center}
\epsfbox{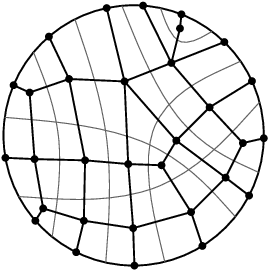} 
\end{center}
\caption{A simple arrangement of $7$ pseudochords with $9$ crossings and its dual quadrangulation.}
\label{obr_12_3_quadrangulation}
\end{figure}

Observe that the dual graph of a simple arrangement of $n$ pseudochords with $k$ crossings is a simple quadrangulation with $2n$ external and $n+k+1$ internal vertices. From the quadrangulation the original arrangement can be uniquely reconstructed up to isomorphism. However, not all simple quadrangulations can be obtained in this way: the graph of the $3$-dimensional cube is such an example.

By plugging $M=n-2$ and $N=n+i+1$ into Mullin's and Schellenberg's formula and summing over $i=0,1,\dots,k$ we obtain the following upper bound.

\begin{proposition}\label{prop_arrangmenty_pres_quadrangulace}
There are at most
$${3n-3\choose n-2}{3n+2k\choose n+k+1}$$
isomorphism classes of simple arrangements of $n$ pseudochords with at most $k$ crossings. 
\qed
\end{proposition}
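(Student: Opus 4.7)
The plan is to apply the dual quadrangulation construction that has just been introduced. By that construction, each isomorphism class of simple arrangements of $n$ pseudochords with exactly $i$ crossings corresponds injectively to an isomorphism class of simple quadrangulations of the disc with $2n$ external and $n+i+1$ internal vertices; hence it suffices to count such quadrangulations for each $i\in\{0,1,\dots,k\}$ and to sum the resulting bounds.

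For each fixed $i$, plugging $M=n-2$ and $N=n+i+1$ into the Mullin--Schellenberg upper bound quoted above gives at most $\binom{3n-3}{n-2}\binom{3n+2i-1}{n+i+1}$ rooted simple quadrangulations with these parameters, hence at most that many unrooted quadrangulations and at most that many isomorphism classes of arrangements with exactly $i$ crossings (here I use that every unrooted quadrangulation admits at least one rooting). Summing over $i$, the proposition reduces to the purely combinatorial inequality
\[
\sum_{i=0}^{k}\binom{3n+2i-1}{n+i+1} \;\le\; \binom{3n+2k}{n+k+1},
\]
which I would prove by a short induction on $k$. The base case $k=0$ is $\binom{3n-1}{n+1}\le\binom{3n}{n+1}$, immediate from Pascal's identity. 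For the inductive step, the induction hypothesis bounds the sum for $i\le k-1$ by $\binom{3n+2k-2}{n+k}\le\binom{3n+2k-1}{n+k}$; adding the $i=k$ term $\binom{3n+2k-1}{n+k+1}$ and applying Pascal's identity $\binom{3n+2k-1}{n+k}+\binom{3n+2k-1}{n+k+1}=\binom{3n+2k}{n+k+1}$ finishes the step.

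Since the dual construction and the Mullin--Schellenberg bound are already in hand from the preceding paragraphs, no serious topological or enumerative obstacle remains; the only step of substance is the summation inequality above, which the short induction settles.
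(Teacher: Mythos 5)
Your proposal is correct and takes essentially the same route as the paper: plug $M=n-2$, $N=n+i+1$ into the Mullin--Schellenberg bound, use that the dual quadrangulation determines the arrangement (and that every unrooted quadrangulation has a rooting), and sum over $i\le k$. The paper states "summing over $i$" without spelling out the closed form; your short Pascal-identity induction showing $\sum_{i=0}^{k}\binom{3n+2i-1}{n+i+1}\le\binom{3n+2k}{n+k+1}$ supplies exactly the arithmetic the paper leaves implicit.
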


Instead of using Proposition~\ref{prop_chord_diagrams} and~\ref{prop_2na2k_chords}, we may directly apply Proposition~\ref{prop_arrangmenty_pres_quadrangulace} with $n:=2mn$ and $k:={m \choose 2}$. This gives the first upper bound in Theorem~\ref{veta_neizo}:

\begin{align*}
T(G) &\le {6mn\choose 2mn}{m^2+6mn\choose \frac{m^2}{2}+2mn}\cdot 2^{O(n\log n)} \\  
     &\le 2^{m^2+2mn(1+3\log_2 3)+O(n\log n)} \\  
     &\le 2^{m^2+11.51mn+O(n\log n)}.
\end{align*}

Substituting $n\le 2m$, the exponent can be also bounded by 
$$m^2\cdot\left(4\log_2 3 + 8\log_2\frac{3}{2}+\frac{17}{2}\cdot\log_2 \frac{26}{17}+\frac{9}{2}\cdot \log_2\frac{26}{9}\right)+o(m^2) \le 23.118 m^2+o(1),$$ 
using the entropy bound for the binomial coefficients ${12m^2 \choose 4m^2}$ and ${13m^2 \choose 9m^2/2}$. 


\subsection{Upper bounds for very sparse graphs}\label{sub_very_sparse}

The upper bound $T(G) \le 2^{O(m^2)}$ is trivially obtained from the upper bound on the number of unlabeled plane graphs (or planar maps). Indeed, every drawing $\mathcal{G}$ of $G$ can be transformed into a plane graph $H$ by subdividing the edges of $\mathcal{G}$ by its crossings and regarding the crossings of $\mathcal{G}$ as new $4$-valent vertices in $H$. The graph $H$ has thus at most $n+ {m \choose 2}$ vertices, at most $m+2{m \choose 2}=m^2$ edges, no loops and no multiple edges. 

A {\em rooted connected planar map} is an unlabeled connected plane multigraph with a distinguished vertex, the {\em root}. In particular, multiple edges and loops are allowed.
Tutte~\cite{T63_census} showed that there are 
$$\frac{2(2M)!3^M}{M!(M+2)!}=2^{(\log_2(12)+o(1))M}$$ 
rooted connected planar maps with $M$ edges (see also~\cite{BR86_survey_asymptotic,BW85_loopless_planar,DM11_universal_exponents}). Walsh and Lehman~\cite{WL75_loopless} showed that the number of rooted connected planar loopless maps with $M$ edges is 
$$\frac{6(4M+1)!}{M!(3M+3)!}=2^{(\log_2(256/27)+o(1))M}.$$ 
This implies the upper bound $T(G)\le 2^{(\log_2(256/27)+o(1))m^2}$.
Somewhat better estimates could be obtained by reducing the problem to counting $4$-regular connected planar maps~\cite{RL01_4regular,RLL02_4regular_asymptoticky}, since typically almost all vertices in $H$ are the $4$-valent vertices obtained from the crossings of $\mathcal{G}$. But such a reduction would be less straightforward and the resulting upper bound $2^{(\frac{1}{2}\log_2(196/27)+o(1)) m^2}$ would not improve our upper bound $2^{m^2+O(mn)}$ for dense graphs (for graphs with more than $27n$ edges the first upper bound from Theorem~\ref{veta_neizo} is better). 

Note that by the reduction to counting planar maps, for every fixed constant $k$, we also obtain the upper bound $2^{O(km^2)}$ on the number of isomorphism classes of connected topological graphs with $m$ edges where all pairs of edges are allowed to cross $k$ times.


\section{The lower bounds}

In this section we present constructions of many pairwise different simple drawings of a given graph $G$, proving the lower bounds in Theorem~\ref{veta_neizo} and~\ref{veta_hlavni}.
Since we are dealing with arbitrary graphs, we use the following tool to find large subgraphs with more ``regular'' structure.

Let $G$ be a graph and let $A,B$ be disjoint subsets of its vertices. By $G[A,B]$ we denote the bipartite graph $(A\cup B, E_G(A,B))$ consisting of all edges with one endpoint in $A$ and the other endpoint in $B$.

\begin{lemma}\label{lemma_partitions}
Let $q,r$ be positive integers with $q\ge 3$ and $1\le r\le {q \choose 2}$. Let $H$ be a graph with vertex set $\{1,2,\dots,q\}$ and with $r$ edges. Let $G=(V,E)$ be a graph with $n$ vertices and $m$ edges. 
There is a partition of the vertex set $V$ into $q$ clusters $V_1, \dots, V_q$ such that for every edge $\{i,j\}$ of $H$ 
the number of edges in the bipartite graph $G[V_i,V_j]$ is at least
$$\frac{2m}{q^2}\left(1-  \sqrt{\frac{r(q-2)}{2}\cdot \frac{n}{m}} - O\left(\sqrt{\frac{m}{n^3}}\right)\right).$$

\end{lemma}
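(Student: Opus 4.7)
I would prove the lemma by the probabilistic method. Place every vertex of $V$ independently and uniformly at random into one of the $q$ clusters $V_1, \ldots, V_q$, and for each edge $e = \{i, j\}$ of $H$ let $X_e = |E(G[V_i, V_j])|$. The aim is to show that with positive probability $X_e$ is close to its expectation $2m/q^2$ for \emph{all} $r$ edges of $H$ simultaneously, which produces a witness partition.

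\textbf{Expectation and variance.} Writing $X_e = \sum_{f \in E(G)} Y_{e, f}$ with $Y_{e, f} = \mathbf{1}[f \in G[V_i, V_j]]$, the two endpoints of $f = uv$ land in $V_i, V_j$ (in either order) with total probability $2/q^2$, so $\mathbb{E}[X_e] = 2m/q^2$. The covariance $\mathrm{Cov}(Y_{e, f}, Y_{e, f'})$ vanishes whenever $f, f' \in E(G)$ share no vertex (the vertex assignments are independent), while for $f = uv$ and $f' = uw$ sharing the vertex $u$, a direct computation gives $\mathrm{Cov}(Y_{e, f}, Y_{e, f'}) = 2/q^3 - 4/q^4 = 2(q-2)/q^4$. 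Since the number of ordered pairs of distinct edges of $G$ sharing a vertex is $\sum_v d_v(d_v-1) \leq 2m(n-2)$ (using $d_v \leq n-1$), we conclude
\[
\mathrm{Var}(X_e) \leq \frac{2m}{q^2} + \frac{4m(q-2)(n-2)}{q^4}.
\]

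\textbf{Chebyshev and union bound.} By Chebyshev's inequality and a union bound over the $r$ edges of $H$,
\[
\Pr\!\Bigl[\exists\, e \in E(H) \colon X_e \leq (1-\delta)\tfrac{2m}{q^2}\Bigr] \leq \frac{r\,\mathrm{Var}(X_e)}{\delta^2 (2m/q^2)^2} \leq \frac{r}{\delta^2}\!\left(\frac{(q-2)(n-2)}{m} + \frac{q^2}{2m}\right).
\]
Choosing $\delta$ so that the right-hand side is strictly less than $1$ produces a witness partition, with $\delta = \sqrt{r(q-2)n/(2m)} + O(\sqrt{m/n^3})$: the $O$-tail absorbs the residual $rq^2/(2m)$ summand and the $(n-2)$-vs-$n$ discrepancy, using the standing assumptions $q \geq 3$ and $m \geq n/2$ (no isolated vertices).

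\textbf{Main obstacle.} The most delicate step is pinning down the factor $\tfrac{1}{2}$ inside the square root: the straightforward analysis above actually yields $\delta \approx \sqrt{r(q-2)n/m}$, which is a factor of $\sqrt{2}$ larger than claimed. Closing this gap requires either switching to the uniform random \emph{equipartition} model (clusters of equal size $n/q$), in which disjoint-edge indicators become slightly negatively correlated and thereby shrink $\mathrm{Var}(X_e)$, or exploiting the cancellation between the positive sharing-pair covariances and the negative disjoint-pair covariances directly (the same cancellation that makes $\mathrm{Var}(X_e) = O(m/q^2)$ when $G = K_n$, where the leading $2n^3(q-2)/q^4$ and $2n^3(2-q)/q^4$ contributions cancel exactly). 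In either refinement the main combinatorial inputs (the $2(q-2)/q^4$ covariance and the bound $\sum_v d_v(d_v-1) \leq 2m(n-2)$) stay the same, and the flexibility of the $O(\sqrt{m/n^3})$ additive correction is just enough to absorb the remaining lower-order residuals.
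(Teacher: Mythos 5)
Your overall approach coincides with the paper's: assign each vertex independently and uniformly to one of $q$ clusters, compute $\mathrm{E}X_{i,j}=2m/q^2$ and bound $\mathrm{Var}(X_{i,j})$ via the $2(q-2)/q^4$ covariance for adjacent edge pairs, then apply Chebyshev with threshold $\sqrt{r}\sigma$ and a union bound over the $r$ edges of $H$. You correctly flag a factor-$\sqrt{2}$ shortfall in the constant inside the square root, but you misdiagnose its source and propose the wrong remedy.

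The shortfall comes entirely from the crude estimate $\sum_v d_v(d_v-1)\le 2m(n-2)$ (equivalently, $p(G)\le m(n-2)$, where $p(G)$ is the number of adjacent edge pairs), obtained from $d_v\le n-1$. The paper instead invokes Nikiforov's sharp asymptotic bound on $\sum_v d_v^2$ (Lemma~\ref{lemma_pocet_dvojic_sharp}), distilled into Corollary~\ref{cor_pocet_dvojic_approximation}: $p(G)\le\frac12 nm+O(m^2/n)$, which is asymptotically half the crude bound. (To see the slack in the crude bound, note that for the star $K_{1,n-1}$ the leaves contribute nothing to $\sum_v d_v(d_v-1)$ but are each charged $n-2$ by $d_v\le n-1$.) Substituting this into $\mathrm{Var}(X_{i,j})\le\frac{2m}{q^2}+\frac{4(q-2)}{q^4}p(G)$ yields $\sigma^2\le\frac{2q-4}{q^4}nm+O(m^2/n)$, and hence $\sqrt{r}\sigma$ gives exactly the claimed $\sqrt{r(q-2)n/(2m)}$, with the $O(\sqrt{m/n^3})$ tail absorbing the lower-order terms. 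No change of probability model is needed.

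Your two suggested fixes do not work as stated. In the iid assignment model, disjoint edges involve disjoint vertex sets, so the corresponding indicator pairs are literally independent and their covariance is zero; there is no negative disjoint-pair covariance to cancel against the positive sharing-pair covariance. Relatedly, for $G=K_n$ one has $\mathrm{Var}(X_{i,j})=\Theta(n^3)$ (dominated by the sharing-pair term $\sum_v d_v(d_v-1)\cdot 2(q-2)/q^4\approx 2n^3(q-2)/q^4$), not $O(m/q^2)=O(n^2)$, so the ``cancellation'' you describe does not occur. The missing ingredient is not a correlation structure but a nontrivial extremal bound on the degree sequence, namely the Ahlswede--Katona/Nikiforov estimate for $\sum_v d_v^2$.
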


This is a variant of the result by K\"uhn and Osthus~\cite[Theorem 3]{KO07_several_cuts}, who consider the case of $r={q \choose 2}$ and assume that $G$ has maximum degree bounded by a constant fraction of $n$. 
The proof of Lemma~\ref{lemma_partitions} is similar to that of Theorem 3 in~\cite{KO07_several_cuts}. The main idea is to use the second order method to analyze the random partition. 

During the analysis we need to bound the number of pairs of adjacent edges in a graph $G$, which we denote by $p(G)$. Let $\mathcal{G}(n,m)$ be the class of all graphs with $n$ vertices and $m$ edges and let $f(n,m)$ be the maximum of $p(G)$ over all $G\in\mathcal{G}(n,m)$. Ahlswede and Katona~\cite{AK78_quasi} proved that the maximum of $p(G)$ is always attained for at least one of two special graphs in $\mathcal{G}(n,m)$, a {\em quasi-star\/} or a {\em quasi-clique\/}. \'Abrego et al.~\cite{AFNW09_sum_of_squares} completely characterized all graphs $G\in\mathcal{G}(n,m)$ for which $p(G)=f(n,m)$. 
The problem of computing $f(n,m)$ has been studied and partially solved by many researches; see~\cite{AFNW09_sum_of_squares} or~\cite{N07_sharp_asymptotics} for an overview of previous results.
Although all the values of $f(n,m)$ have been computed, the behavior of the function depends on certain nontrivial number-theoretic properties of the parameters $m,n$~\cite{AFNW09_sum_of_squares}.
Nikiforov~\cite{N07_sharp_asymptotics} proved tight asymptotic upper bounds on $f(n,m)$, which may be stated in a simplified form as follows.

\begin{lemma}\label{lemma_pocet_dvojic_sharp}
{\rm~\cite[Theorem 2]{N07_sharp_asymptotics}}
For all $n$ and $m$, 
\begin{align*} 
f(n,m)&\le \sqrt{2}m^{3/2}                       &\mbox{ if } m &\ge n^2/4, \mbox{ and }\\
f(n,m)&\le \frac{1}{2}\left((n^2-2m)^{3/2}-n^3\right) + 2nm &\mbox{ if } m &<n^2/4.
\end{align*}
\end{lemma}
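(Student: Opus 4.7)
The plan is to reduce the problem to a degree-sequence optimization and then invoke the Ahlswede--Katona classification of extremal graphs. The starting identity is $p(G) = \sum_v\binom{d_v}{2} = \tfrac{1}{2}\sum_v d_v^2 - m$, so maximising $p(G)$ over $\mathcal{G}(n,m)$ is the same as maximising $\sum_v d_v^2$. The Ahlswede--Katona theorem, already cited in the excerpt, asserts that this maximum is attained by one of two explicit graphs: the quasi-clique $C(n,m)$ consisting of a clique $K_r$ (with $r$ the largest integer satisfying $\binom{r}{2}\le m$) together with the remaining $m-\binom{r}{2}$ edges attached to a single new vertex, or the quasi-star $Q(n,m)=\overline{C(n,\binom{n}{2}-m)}$. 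Thus it suffices to upper-bound $p$ on these two families separately.

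For the quasi-clique, a direct degree-sequence computation gives $p(C(n,m)) \le r\binom{r-1}{2}+O(r^2)$, and since $r\le \sqrt{2m}+1$, the leading term matches the target, yielding $p(C(n,m))\le \sqrt{2}\,m^{3/2}$. For the quasi-star, I would use the complementation identity
\[
\sum_v d_v(G)^2 \;=\; n(n-1)^2 - 4m(n-1) + \sum_v d_v(\overline{G})^2
\]
to transfer the quasi-clique estimate to $\overline{Q(n,m)}=C(n,\binom{n}{2}-m)$, whose edge count is $\bar m=\binom{n}{2}-m$; plugging in the bound $\sqrt{2\bar m}\le \sqrt{n^2-2m}$ and simplifying produces the expression $\tfrac{1}{2}((n^2-2m)^{3/2}-n^3)+2nm$. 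The two bounds coincide at $m=n^2/4$ (both equal $n^3/(4\sqrt{2})$), which is exactly the crossover point in the statement: for $m\ge n^2/4$ the quasi-clique contribution dominates, while for $m<n^2/4$ the quasi-star contribution dominates, which is the reason for the case split.

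The main obstacle I anticipate is the careful integer bookkeeping: the quasi-clique and quasi-star are parametrised by integers (the clique size $r$ and the leftover degree $m-\binom{r}{2}$), and the stated bounds must hold for every admissible pair $(n,m)$, not only asymptotically. One has to verify that the extra vertex of the quasi-clique, and the correspondingly deficient vertex of the quasi-star, do not push $p$ above the clean closed-form expressions when rounding $\sqrt{2m}$ to an integer; this comes down to an elementary but fussy estimate. Invoking Ahlswede--Katona as a black box isolates the entire difficulty to a finite calculation with these two explicit families, which seems much cleaner than attempting a self-contained convexity argument that would essentially have to rediscover the extremal characterisation.
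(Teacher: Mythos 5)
This lemma is not proved in the paper at all: it is cited verbatim as Theorem~2 of Nikiforov~\cite{N07_sharp_asymptotics}, and only the weaker Corollary~\ref{cor_pocet_dvojic_approximation} is derived from it. So the comparison is really against the claimed result itself, and here your argument has a genuine quantitative gap.

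Your reduction of $p(G)$ to $\sum_v d_v^2$ and the appeal to Ahlswede--Katona are fine starting points, and the quasi-clique estimate $p(C(n,m))\le\sqrt{2}\,m^{3/2}$ is plausibly provable after the ``fussy'' integer bookkeeping you mention. The problem is the transfer to the quasi-star. First, a small slip: the complementation identity should read $\sum_v d_v(G)^2=-n(n-1)^2+4m(n-1)+\sum_v d_v(\overline G)^2$ (check it on $K_3$: $12\neq 12-24+0$ with your sign). Translating to $p$, one gets exactly
\[
p\bigl(Q(n,m)\bigr)=-\tfrac{1}{2}n(n-1)(n-2)+2m(n-2)+p\bigl(C(n,\overline m)\bigr).
\]
Plugging in $p(C(n,\overline m))\le\sqrt{2}\,\overline m^{3/2}\le \tfrac12(n^2-2m)^{3/2}$ yields
\[
p\bigl(Q(n,m)\bigr)\le \tfrac12(n^2-2m)^{3/2}-\tfrac12 n(n-1)(n-2)+2m(n-2),
\]
and the difference between this and the stated target $\tfrac12(n^2-2m)^{3/2}-\tfrac12 n^3+2nm$ equals $\tfrac12 n(3n-2)-4m$, which is $\Theta(n^2)$ and \emph{positive} throughout the entire regime $m<n^2/4$. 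Concretely, at $m=0$ the lemma gives the sharp value $0$, while your transferred bound gives $\approx\tfrac32 n^2$. The root cause is that $\sqrt{2}\,m^{3/2}$ overestimates $p(C(n,m))$ by $\Theta(n^2)$ when $m$ is near $\binom{n}{2}$, which is precisely where $C(n,\overline m)$ lives when $m$ is small; this slack is carried through the complementation untouched. To recover Nikiforov's bound one needs a tighter (second-order) estimate on the quasi-clique before complementing, or an argument that bounds the quasi-star directly.

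A secondary issue: you assert ``for $m\ge n^2/4$ the quasi-clique dominates, for $m<n^2/4$ the quasi-star dominates,'' but never justify it, and this dominance is in fact not a clean threshold --- as the paper notes when discussing \'Abrego et al., the exact crossover between quasi-clique and quasi-star has nontrivial number-theoretic dependence on $(n,m)$. A correct proof along your lines must verify that \emph{both} extremal families satisfy the stated bound in each regime (or establish the dominance precisely), not just one family per regime.
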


We use a weaker, even more simplified asymptotic upper bound, which is easier to apply. For our purposes, we need the bound to be tight only for small values of $m$.

\begin{corollary}\label{cor_pocet_dvojic_approximation}
For all $n$ and $m$, 
$$f(n,m) \le \frac{1}{2}nm +O\left(\frac{m^2}{n}\right).$$
\end{corollary}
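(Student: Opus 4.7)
The plan is to derive the simplified bound by splitting into the two cases of Lemma~\ref{lemma_pocet_dvojic_sharp} and showing that in each case the stated estimate already implies $\tfrac{1}{2}nm + O(m^2/n)$.

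First I would handle the dense regime $m \ge n^2/4$. Here $n \le 2\sqrt{m}$, so
$$\sqrt{2}\,m^{3/2} = \sqrt{2}\cdot \frac{m^2}{\sqrt{m}} \le 2\sqrt{2}\cdot \frac{m^2}{n},$$
which is already $O(m^2/n)$, and in particular bounded by $\tfrac{1}{2}nm + O(m^2/n)$.

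The main step is the sparser case $m < n^2/4$. Here the strategy is to Taylor-expand the term $(n^2-2m)^{3/2}$ around $n^2$. Writing $x := 2m/n^2 \in [0, 1/2)$, we have
$$(n^2-2m)^{3/2} = n^3(1-x)^{3/2}.$$
Since $g(x) := (1-x)^{3/2}$ satisfies $g(0)=1$, $g'(0)=-3/2$, and $|g''(x)| \le \tfrac{3}{4}\sqrt{2}$ on $[0,1/2]$, Taylor's theorem with Lagrange remainder yields $g(x) \le 1 - \tfrac{3}{2}x + Cx^2$ for an absolute constant $C$. Multiplying by $n^3$ gives
$$(n^2-2m)^{3/2} \le n^3 - 3mn + C\cdot\frac{4m^2}{n^4}\cdot n^3 = n^3 - 3mn + O\!\left(\frac{m^2}{n}\right).$$
Plugging this into the bound of Lemma~\ref{lemma_pocet_dvojic_sharp} produces
$$f(n,m) \le \tfrac{1}{2}\bigl(-3mn + O(m^2/n)\bigr) + 2nm = \tfrac{1}{2}nm + O\!\left(\frac{m^2}{n}\right),$$
as desired.

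There is no real obstacle here; the only thing requiring care is verifying that the Taylor remainder is controlled uniformly on the relevant interval $x \in [0, 1/2)$ so that the hidden constant in $O(m^2/n)$ is genuinely absolute, independent of $n$ and $m$. Since $g''$ is bounded on any interval $[0,1-\delta]$, and the case $m \ge n^2/4$ was handled separately, this uniformity is automatic.
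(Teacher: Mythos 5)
Your proof is correct and follows essentially the same route as the paper: both split on $m \ge n^2/4$ vs.\ $m < n^2/4$, handle the dense case by noting $n \le 2\sqrt{m}$, and bound $(n^2-2m)^{3/2}$ in the sparse case by a second-order expansion of $(1-x)^{3/2}$ around $x=0$. The only cosmetic difference is that the paper obtains the quadratic bound via the elementary inequality $\sqrt{1-x}\le 1-x/2$ cubed, whereas you invoke Taylor's theorem with a Lagrange remainder; both yield the same $O(m^2/n)$ error term with an absolute constant.
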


\begin{proof}
If $m \ge n^2/4$, then by Lemma~\ref{lemma_pocet_dvojic_sharp} we have 
$$f(n,m)\le\sqrt{2}m^{3/2}\cdot \frac{2m^{1/2}}{n} \le \frac{2\sqrt{2}m^2}{n}.$$ 
If $m < n^2/4$, then by Lemma~\ref{lemma_pocet_dvojic_sharp}, the desired upper bound is equivalent to the inequality
$$3mn-n^3+(n^2-2m)^{3/2} \le O(m^2/n).$$
Using the inequality $\sqrt{1-x}\le 1-x/2$, which holds for $x\le 1$, we have
$$3mn-n^3+(n^2-2m)^{3/2}=3mn+n^3((1-2m/n^2)^{3/2}-1) $$
$$\le 3mn + n^3\left(\left(1-\frac{m}{n^2}\right)^3-1\right) = \frac{3m^2}{n}-\frac{m^3}{n^3}.$$
\end{proof}

\begin{proof}[Proof of Lemma~\ref{lemma_partitions}]
Let $V_1,V_2,\dots,V_q$ be a random partition of the vertex set $V$, where each vertex is assigned independently to cluster $V_i$ with probability $1/q$. For $\{i,j\} \in E(H)$, let $X_{i,j}$ be a random variable counting the number of edges in the bipartite graph $G[V_i,V_j]$. Clearly, we have $\mathrm{E}X_{i,j}=2m/q^2$. Let $\sigma^2=\sigma_{i,j}^2=\mathrm{VAR}X_{i,j}$.

By Chebyshev's inequality, we have
$$P\left(X_{i,j}<\frac{2m}{q^2}-\sqrt{r}\sigma\right) < \frac{1}{r}.$$
It follows that there is a partition $V_1,V_2,\dots,V_q$ such that for every edge $\{i,j\}$ of $H$, the graph $G[V_i,V_j]$ has at least $\frac{2m}{q^2}-\sqrt{r}\sigma$ edges.

To complete the proof, we need to estimate $\sigma$ from above. Let $X=X_{i,j}$ for some $\{i,j\} \in E(H)$. We have

$$\sigma^2 = \mathrm{E}X^2 - (\mathrm{E}X)^2 = \mathrm{E}X^2 - \frac{4m^2}{q^4}.$$

For every edge $e$ of $G$, let $X_e$ be the indicator variable of the event that $e$ has one endpoint in $V_i$ and the other endpoint in $V_j$. Clearly, $X=\sum_{e\in E}X_e$. 
Recall that $p(G)$ denotes the number of pairs of adjacent edges in $G$. 
We have

\begin{align*}
\mathrm{E}X^2 &= \sum_{e\in E}\mathrm{E}X^2_e + 2\cdot\sum_{e,e'\in E; \, e\neq e'}\mathrm{E}X_eX_{e'} \\
&=\frac{2m}{q^2} + 2\cdot\frac{2}{q^3}\cdot p(G) + 2\cdot \frac{4}{q^4}\left({m\choose 2}-p(G)\right) \\
&= \frac{2m}{q^2} + \frac{8}{q^4}{m\choose 2} + \left(\frac{4}{q^3}-\frac{8}{q^4}\right)p(G). 
\end{align*}

By Corollary~\ref{cor_pocet_dvojic_approximation}, $p(G) \le \frac{1}{2}nm +O\left(\frac{m^2}{n}\right)$. Hence,

\begin{align*}
\sigma^2 &\le \frac{2m}{q^2} + \frac{4m^2}{q^4} + \frac{4q-8}{q^4}\cdot \frac{1}{2}nm +O\left(\frac{m^2}{n}\right) - \frac{4m^2}{q^4}\\
&\le \frac{2q-4}{q^4}\cdot nm + O\left(\frac{m^2}{n}\right) \\
&\le \left(\frac{\sqrt{2q-4}}{q^2}\cdot \sqrt{nm} +O\left(\frac{m^{3/2}}{n^{3/2}}\right) \right)^2
\end{align*}
and the lemma follows.

\end{proof}

\subsection{The lower bound in Theorem~\ref{veta_neizo}}

The construction giving the first lower bound in Theorem~\ref{veta_neizo} generalizes the construction from~\cite{K09_enumeration}.

Let $\varepsilon>0$ and let $G=(V,E)$ be a graph with $n$ vertices and $m$ edges. We apply Lemma~\ref{lemma_partitions} with $q=6$, $r=3$ and $E(H)=\{\{1,4\},\{2,5\},\{3,6\}\}$. If $m>(6+\varepsilon)\cdot n$, then Lemma~\ref{lemma_partitions} implies that there is a partition of $V$ into six clusters $V_1,V_2,\dots,V_6$ such that each of the three subgraphs $G[V_1,V_4],G[V_2,V_5],G[V_3,V_6]$ has $\Omega(m)$ edges. We may assume that $G[V_3,V_6]$ has the least number of edges of these three graphs.

Like in~\cite{K09_enumeration}, we construct $2^{\Omega(m^2)}$ drawings of $G$ that are all weakly isomorphic to the same geometric graph with vertices in convex position. For each $k=1,2,\dots, 6$, we place the vertices of the set $V_k$ on the unit circle, inside a small neighborhood of the point $(\cos(\frac{k\pi}{3}),\sin(\frac{k\pi}{3}))$; see Figure~\ref{obr_13_neizo_mrizka}, left. For every pair of vertices $u\in V_k$ and $v \in V_l$ such that $|k-l|\neq 3$, we draw the edge $uv$ as a straight-line segment. For $k\in \{1,2,3\}$, the edges between the sets $V_k$ and $V_{k+3}$ are drawn inside a narrow rectangle $R_k$ such that all the crossings among this group of edges occur outside the region $R=R_1 \cap R_2 \cap R_3$, and for $k,l \in \{1,2,3\}$, $k\neq l$, all the crossings between the edges of G$[V_k,V_{k+3}]$ and G$[V_l,V_{l+3}]$ lie inside $R$. In the region $R$, the edges connecting $V_2$ with $V_5$ form $\Omega(m)$ parallel curves. Together with the edges connecting $V_1$ with $V_4$, they form an $\Omega(m) \times \Omega(m)$ grid inside $R$. 

We partition the crossings of this grid into $\Omega(m)$ parallel diagonals forming horizontal rows. Each (horizontal) edge $e$ connecting $V_3$ with $V_6$ is drawn along one of the diagonal $d_i$. Each edge is assigned to a different diagonal. In the neighborhood of each crossing $c$ in $d_i$ we can decide whether the edge $e$ passes above or below $c$; see Figure~\ref{obr_13_neizo_mrizka}, right. These two possibilities give us two nonisomorphic topological graphs, and the choices can be made independently at each crossing of the grid. Since we make the choice at $\Omega(m^2)$ crossings, we obtain $2^{\Omega(m^2)}$ pairwise nonisomorphic drawings of $G$.

\begin{figure}
\begin{center}
\epsfbox{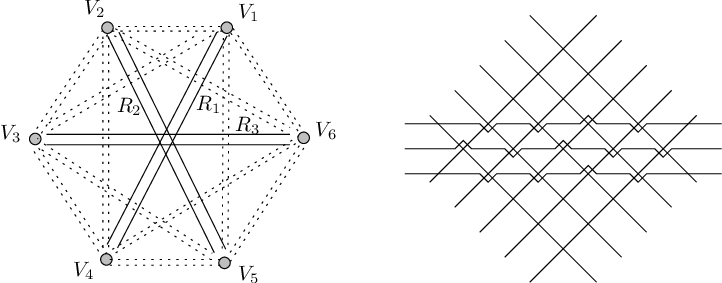}
\end{center}
\caption{A construction of $2^{\Omega(m^2)}$ pairwise nonisomorphic drawings of a given graph.}
\label{obr_13_neizo_mrizka}
\end{figure}

For graphs with superlinear number of edges, Lemma~\ref{lemma_partitions} gives a partition where each of the graphs $G[V_i,V_j]$ has $c=2m/q^2 - o(m)$ edges. In the previous construction, this gives a grid with $c^2=4m^2/q^4-o(m^2)$ crossings and hence $2^{3m^2/q^4-o(m^2)}$ pairwise nonisomorphic drawings of $G$, since $3/4$ of the crossings can be covered by $c$ parallel diagonals. For $q=6$, this gives the lower bound $T(G)\ge 2^{m^2/432-o(m^2)}$.

The constant $1/432$ can be easily improved. 
Previous construction used as a ``template'' a convex geometric drawing of $K_6$. This topological graph has one {\em free triangle\/}, that is, a triangular face bounded by three pairwise crossing edges. A free triangle may be {\em switched\/} by moving a portion of one of the boundary edges over the crossing of the other two edges. This feature is then amplified by replacing the free triangle by the grid construction. A set of $k$ free triangles is {\em independent\/} if no two of the triangles share a vertex. Equivalently, every two triangles share at most one boundary edge. This guarantees that each of the $2^k$ combinations of switched triangles is possible. There are simple drawings of $K_6$ with two independent free triangles~\cite{HM92_drawings,HM74_edges}. If we use one of them as a template, we get $2^{m^2/216 -o(m^2)}$ pairwise nonisomorphic drawings of $G$.

Using larger simple complete topological graphs as templates, much better lower bounds can be obtained. Instead of free triangles, we may consider, in general, {\em free $k$-tuples\/}, which consist of $k$ pairwise crossing edges with all ${k\choose 2}$ crossings close to each other, forming locally an arrangement of $k$ pseudolines. A system of free $k$-tuples is {\em independent\/} if no two $k$-tuples share a crossing.

When replacing a free $4$-tuple by the grid construction, we use both horizontal and vertical diagonals of the grid. After drawing the horizontal and vertical edges along the diagonals, half of the crossings in the grid become free $4$-tuples and the other half free triangles. Every free $4$-tuple can be drawn in $8$ different ways. Therefore, by replacing each of the original four edges by $c$ parallel edges, we obtain $2^{(1/2+3\cdot 1/2)c^2}=2^{2c^2}$ pairwise nonisomorphic drawings. That is, every free $4$-tuple in the template with $q$ vertices contributes $8m^2/q^4$ to the exponent in the lower bound on $T(G)$.

For example, the regular convex drawing of $K_{10}$ on Figure~\ref{obr_14_K10} has, after small perturbation, one free $5$-tuple, $5$ free $4$-tuples and $25$ free triangles, all independent.
Using this drawing as a template, we obtain the lower bound $T(G)\ge 2^{m^2\cdot 123 / 10^4 -o(m^2)} > 2^{m^2/82} -o(1)$ (for simplicity, we estimate the contribution of the free $5$-tuple by the contribution of a free $4$-tuple).

\begin{figure}
\begin{center}
\epsfbox{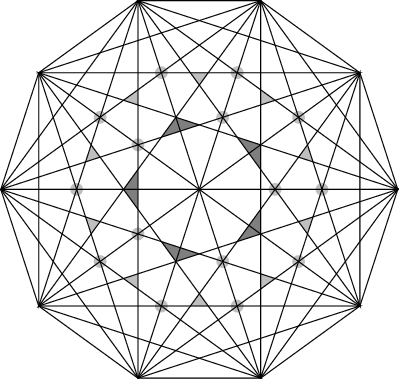}
\end{center}
\caption{A complete convex geometric graph with $10$ vertices and an independent system of $25$ free triangles (light grey), $5$ free $4$-tuples (dark grey) and one free $5$-tuple.}
\label{obr_14_K10}
\end{figure}

Further improvement can be obtained using all possible partial arrangements of three pairwise crossing systems of parallel pseudolines, in place of the grid construction, which produces only a subset of all such arrangements. Felsner and Valtr~\cite{FV11_pseudolines} proved that there are $2^{(4.5 \log_2 3 - 6-o(1))n^2}>2^{1.132\cdot n^2}-o(1)$ partial arrangements of $3n$ pseudolines that form three pairwise crossing subsets of $n$ parallel pseudolines. They observed that these partial arrangements are dual to rhombic tilings of a regular hexagon and used MacMahon's formula enumerating these tilings.
This also implies the rough lower bound $2^{2.264\cdot n^2}-o(1)$ on the number of partial arrangements of $4n$ pseudolines that form four pairwise crossing subsets of $n$ parallel pseudolines. Using these estimates with the template from Figure~\ref{obr_14_K10}, we obtain the lower bound $T(G)\ge 2^{m^2 \cdot 167.585/ 10^4  - o(m^2)} > 2^{m^2/60} - o(1)$.

This lower bound on $T(G)$ is very likely far from being optimal. However, it is probably hard to close the gap between the lower and upper bound on $T(G)$, given that even for pseudoline arrangements, the best known lower and upper bounds on their number differ significantly~\cite{FV11_pseudolines}.


\subsection{The lower bound in Theorem~\ref{veta_hlavni}}

Fix $\varepsilon>0$ and let $G$ be a graph with $n$ vertices and $m$ edges, with no isolated vertices, and satisfying at least one of the conditions $m>(1+\varepsilon)n$ or $\Delta(G)<(1-\varepsilon)n$. 

\subsubsection{The first construction}

First we show that $T_\mathrm{w}(G) \ge 2^{\Omega(m)}$ for graphs with $m>(4+\varepsilon) n$, generalizing a construction by Pach and T\'oth~\cite{PT04_how}.

Without loss of generality assume that $n$ is odd. Let $W$ be a random subset of $(n+1)/2$ vertices of $G$. The expected number of edges in the induced graph $G[W]$ is 
$$\frac{{(n+1)/2 \choose 2}}{{n \choose 2}}m=\frac{n+1}{4n}m = \left(\frac{1}{4}+\frac{1}{4n}\right)m.$$ 
Let $W_0$ be a subset of $(n+1)/2$ vertices inducing at least $(1/4+1/(4n))m$ edges. Every graph with $m$ edges has a bipartite subgraph with at least $m/2$ edges. Let $W_0=W_1\cup W_2$ be a bipartition such that the bipartite graph $G[W_1,W_2]$ has at least $(1/8+1/(8n))m$ edges.

We place the vertices of $V$ on three parallel vertical lines as follows.
The vertices of $W'=V\setminus W_0$ are placed on the $y$-axis to the points $(0,i/2)$, $i=0,1,\dots, (n-3)/2$, the vertices of $W_1$ to the points $(-1,i)$, $i=0,1,\dots, |W_1|-1$, and the vertices of $W_2$ to the points $(1,i), i=1,2,\dots, |W_2|-1$. Observe that the midpoint of every straight-line segment with one endpoint in $W_1$ and the other endpoint in $W_2$ lies in $W'$.

The idea of obtaining exponentially many pairwise different drawings of $G$ is now similar as in the grid construction in the previous subsection. The edges of $G[W']$ are drawn as arcs close to the $y$-axis. Every edge $e=w_1w_2$ of $G[W_1,W_2]$ is drawn as an arc along the straight-line segment $w_1w_2$, in one of two possible ways: either close above or close below the segment. See Figure~\ref{obr_15_dolni}. Let $w'$ be the midpoint of $w_1w_2$. If $w'$ is adjacent to a vertex  $u\in V\setminus\{w_1,w_2\}$, then in one of the two drawings the edges $w_1w_2$ and $w'u$ cross and in the other one they are disjoint. Since the minimum degree in $G$ is at least $1$, for every $w'\in W'$, there is at most one pair of vertices $w_1\in W_1$ and $w_2\in W_2$ such that $w'$ is a midpoint of the segment $w_1w_2$ and $w'$ is not adjacent to $V\setminus\{w_1,w_2\}$. This implies that for at least 
$$(1/8+1/(8n))m-(n-1)/2 > ((1/8+1/(8n)-1/(8+2\varepsilon))m = \Omega(m)$$ 
edges of $G[W_1,W_2]$, the two choices produce two weakly nonisomorphic drawings. Since the choices for all the edges are independent, this gives $2^{\Omega(m)}$ pairwise weakly nonisomorphic drawings of $G$ in total.

\begin{figure}
\begin{center}
\epsfbox{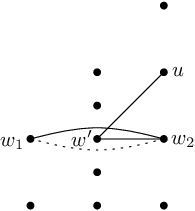}
\end{center}
\caption{Two ways of drawing the edge $w_1w_2$.}
\label{obr_15_dolni}
\end{figure}

\subsubsection{The second construction}
The lower bound on $T_\mathrm{w}(G)$ can be improved for sparse graphs with minimum degree at least $1$ that have at least $(1+\varepsilon)n$ edges or maximum degree at most $(1-\varepsilon)n$. Such assumptions are needed to guarantee a nontrivial number of pairs of independent edges, to avoid graphs like stars, which can only be drawn without crossings. For every such graph $G$ we show the lower bound $T_\mathrm{w}(G)\ge 2^{\Omega(n\log n)}$. Moreover, all the drawings in this construction are {\em geometric graphs\/}; that is, the edges are drawn as straight-line segments.

\begin{lemma}\label{lemma_les_bipartitni}
Let $G$ be a graph with $n$ vertices, $m$ edges, no isolated vertices and satisfying $m>(1+\varepsilon)n$ or $\Delta(G)<(1-\varepsilon)n$. Then the vertex set of $G$ can be partitioned into three parts $V_1, V_2,V_3$ such that $|V_1|\ge n/4$, every vertex from $V_1$ has a neighbor in $V_2$ and the induced graph $G[V_3]$ has at least $\lfloor\varepsilon/2 \cdot n\rfloor$ edges.
\end{lemma}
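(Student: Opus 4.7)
The plan is to split on $\Delta(G)$. First suppose $\Delta(G) < (1-\varepsilon) n$. Since $G$ has no isolated vertex, Gallai's theorem gives a minimum edge cover that decomposes as a disjoint union of stars $S_1, \ldots, S_\nu$ with $\nu = \nu(G)$, whose vertex sets partition $V$; let $n_i = |V(S_i)|$ and $c_i$ be the center of $S_i$, and order them so that $n_1 \ge n_2 \ge \cdots$. The degree hypothesis forces $n_1 - 1 \le d(c_1) \le \Delta < (1-\varepsilon)n$. The key observation is that if $V_3$ is chosen as a union of \emph{complete} stars from this decomposition, then every vertex in $V \setminus V_3$ retains its star-partner in $V \setminus V_3$, so $G[V \setminus V_3]$ has no isolated vertex; a spanning forest of $G[V \setminus V_3]$ then admits a proper $2$-coloring whose larger class has size at least $(n - |V_3|)/2$, and I take that class to be $V_1$.

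It remains to choose $V_3$ with $|V_3| \le n/2$ and $|E(G[V_3])| \ge \lfloor \varepsilon n/2 \rfloor$. I split into three sub-cases. If $n_1 > n/2$, take $V_3 = V \setminus V(S_1)$; then $|V_3| = n - n_1 < n/2$, and since the remaining stars each have $\ge 2$ vertices, they contribute at least $(n-n_1) - (\nu-1) \ge (n - n_1)/2 \ge \varepsilon n/2$ star-edges inside $V_3$. Otherwise every star has $\le n/2$ vertices; if some $S_i$ has at least $\lfloor \varepsilon n/2\rfloor + 1$ vertices, set $V_3 = V(S_i)$, which yields $|V_3| \le n/2$ and $\ge \lfloor\varepsilon n/2\rfloor$ internal star-edges. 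In the last sub-case every $n_i \le \lfloor \varepsilon n/2 \rfloor$, and I greedily add whole stars to $V_3$ until the internal edge count first reaches $\lfloor \varepsilon n/2 \rfloor$; since each added star has at most $\lfloor \varepsilon n/2 \rfloor$ vertices, the overshoot is controlled and $|V_3| < 3 \lfloor \varepsilon n/2 \rfloor \le n/2$ (assuming, as we may, that $\varepsilon \le 1/3$).

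For the complementary case $\Delta(G) \ge (1-\varepsilon) n$, the hypothesis must supply $m > (1+\varepsilon) n$. Fix $v$ with $d(v) \ge (1-\varepsilon) n$. The number of $G$-edges not incident to $v$ is at least $m - d(v) > \varepsilon n$, so I pick any $\lfloor \varepsilon n/2 \rfloor$ of them and let $V_3$ be the set of their endpoints, a subset of $V \setminus \{v\}$ of size at most $\varepsilon n$. Then I set $V_1 = N(v) \setminus V_3$ and $V_2 = \{v\} \cup (V \setminus N[v] \setminus V_3)$; each $u \in V_1$ has $v \in V_2$ as a neighbor, $|V_1| \ge d(v) - |V_3| \ge (1-2\varepsilon) n \ge n/4$, and $G[V_3]$ contains the $\lfloor \varepsilon n/2 \rfloor$ chosen edges.

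The main obstacle will be to arrange $V_3$ in the first case so that it is simultaneously small enough ($|V_3| \le n/2$), dense enough ($\ge \lfloor \varepsilon n/2 \rfloor$ internal edges), \emph{and} leaves $V \setminus V_3$ isolate-free so that the spanning-forest $2$-coloring produces a $V_1$ of the required size. Insisting that $V_3$ is a union of complete stars from the Gallai decomposition takes care of the third property automatically, so the remaining work reduces to a short bookkeeping on the size distribution $n_1 \ge n_2 \ge \cdots$ of the stars.
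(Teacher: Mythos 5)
Your proof is correct and runs closely parallel to the paper's, but with a different case split. The paper dichotomizes on whether some spanning forest with no isolated vertices admits a partition of its components into two subforests each with at least $\lfloor\varepsilon n\rfloor$ vertices (Case~1), and in the ``no'' case (Case~2) shows that the spanning forest with a maximum number of components is a union of stars whose largest component has more than $(1-\varepsilon)n$ vertices, hence $\Delta(G)\ge(1-\varepsilon)n$; you instead split directly on $\Delta(G)$. The underlying object is the same: a spanning forest with no isolated vertices and a maximum number of components is exactly a decomposition into $\nu(G)$ stars, which is what a minimum edge cover (your Gallai route) yields, so the two decompositions coincide. Your Case~A requires the explicit three-way bookkeeping on the star sizes $n_1\ge n_2\ge\cdots$ that the paper's Case~1 sidesteps by positing the $(F_1,F_2)$-partition as a hypothesis, while your Case~B is essentially the paper's Case~2: a vertex $v$ of degree at least $(1-\varepsilon)n$ forces $m>(1+\varepsilon)n$, and $V_3$ is built from endpoints of $\lfloor\varepsilon n/2\rfloor$ edges avoiding $v$, with $V_1\subseteq N(v)$ and $V_2\ni v$. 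Both proofs carry the same $O(1)$ floor-function slack and tacitly assume $\varepsilon$ is small, which you state explicitly ($\varepsilon\le 1/3$) and which is harmless for the lemma's intended application. The net trade-off is that your split is unconditional and perhaps easier to verify, at the cost of making the star-size bookkeeping visible rather than absorbing it into a hypothesis.
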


\begin{proof}
We distinguish two cases.

{\bf Case 1:} $G$ has a spanning forest $F$ with no isolated vertices such that its components can be partitioned into two subforests $F_1$ and $F_2$, each with at least $\lfloor\varepsilon n\rfloor$ vertices. Assume that $|V(F_1)|\ge |V(F_2)|$. We set $V_3=V(F_2)$. Now $V_1$ and $V_2$ are defined as the color classes of a proper $2$-coloring of $F_1$, with $|V_1|\ge |V_2|$.

{\bf Case 2:} No spanning forest as in Case 1 exists. Let $F$ be a spanning forest with no isolated vertices and maximum possible number of components. If some of the components has a path of length three as a subgraph, then by removing the middle edge of the path, the tree splits into two smaller nontrivial components, contradicting the choice of $F$. It follows that every component of $F$ is a star, that is, a graph isomorphic to $K_{1,k}$ for some $k\ge 1$.
Let $T_0$ be the largest component in $F$. By the assumption, $T_0$ is a star with more than $\lceil(1-\varepsilon)n\rceil$ vertices. In particular, $\Delta(G)\ge\Delta(T_0)\ge (1-\varepsilon)n$. Hence we have $m>(1+\varepsilon)n$. This means that $G$ has more than $\varepsilon n$ edges that do not belong to $T_0$. Let $V_3$ be the set of vertices spanned by $\lfloor\varepsilon/2 \cdot n\rfloor$ such edges, together with all vertices that do not belong to $T_0$. Finally, we set $V_2$ to be the one-element set containing the central vertex of $T_0$ and $V_1=V(T_0)\setminus(V_2\cup V_3)$.
\end{proof}

Let $V_1,V_2,V_3$ be the partition from Lemma~\ref{lemma_les_bipartitni}. Let $H$ be a bipartite subgraph of $G[V_3]$ with at least $\lfloor\varepsilon/4 \cdot n\rfloor$ edges. Split the set $V_3$ into two parts according to the bipartition of $H$ and place all vertices from one part in a small disc with center $(0,0)$ and radius $r<1/3$ and all vertices from the second part in a small disc with center $(1,0)$ and radius $r$, so that the vertices are in general position. Draw all edges of $G[V_3]$ as straight-line segments. See Figure~\ref{obr_16_dolni2}. There are two lines $t_1$, $t_2$ parallel to the $y$-axis going through points $(x_1,0)$ and $(x_2,0)$, respectively, such that $r<x_1<x_2<1-r$ and no two edges of $G[V_3]$ cross between $t_1$ and $t_2$. In particular, the edges of $G[V_3]$ split the vertical strip $S$ between $t_1$ and $t_2$ into at least $\lfloor\varepsilon/4 \cdot n\rfloor+1$ regions. Place all vertices of $V_2$ inside $S$ above the horizontal line with $y$-coordinate $r$, and each of the vertices of $V_1$ in one of the $\lfloor\varepsilon/4 \cdot n\rfloor+1$ regions of $S$, so that all vertices are in general position. Draw all the remaining edges as straight-line segments. The choice of the region for each vertex $v$ of $V_1$ determines how many edges from $H$ an edge connecting $v$ with $V_2$ crosses. In total, this gives $(|E(H)|+1)^{|V_1|}\ge (\varepsilon/4 \cdot n)^{n/4} \ge 2^{\Omega(n\log n)}$ pairwise weakly nonisomorphic geometric drawings of $G$.

\begin{figure}
\begin{center}
\epsfbox{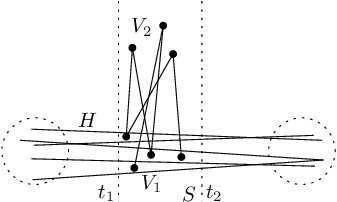}
\end{center}
\caption{An illustration of the second construction for the lower bound in Theorem~\ref{veta_hlavni}.}
\label{obr_16_dolni2}
\end{figure}


\section{Geometric graphs}\label{section_geometric}

A {\em geometric graph\/} is a topological graph where edges are drawn as straight-line segments. It is also usually assumed that the vertices are in general position, that is, no three of them lie on a line. 

For geometric graphs, asymptotically matching lower and upper bounds on both the number of  isomorphism and weak isomorphism classes can be easily derived from known results. It is easy to see that there are at least $2^{\Omega(n \log n)}$ weak isomorphism classes of complete geometric graphs with $n$ vertices, even when we drop the labels of vertices: place a set $A$ of $n/2$ points in convex position, draw the complete geometric graph on $A$ and distribute the remaining $n/2$ points in the $\Theta(n^4)$ bounded faces of $A$. The number of edges here is not crucial: the same asymptotic lower bound is known for matchings on $n$ vertices (see the remark after Theorem~\ref{veta_pseudosegmenty} in the Introduction).

For every given abstract graph $G$ with $n$ vertices, there are at most $2^{O(n \log n)}$ isomorphism classes of geometric graphs realizing $G$. This follows from the upper bound on the number of sign patterns~\cite{W68_nonlinear_manifolds}; see also~\cite[Theorem 6.2.1]{M02_lectures}. The reduction proceeds as follows. We define $2n$ variables as the $2n$ coordinates of the $n$ vertices. Every condition of the form ``segments $xy$ and $uv$ cross'', ``point $x$ is to the left of the ray $uv$'', ``the crossing of $xy$ with $uv$ is closer to $x$ than the crossing of $xy$ with $wz$", or ``vertices $x,y,z$ are seen from $u$ in clockwise order'', is then expressed in a straightforward way by inequalities of quadratic polynomials in the $2n$ variables. Then the theorem on the number of sign patterns is applied.

By the combinatorial definition of isomorphism in Subsection~\ref{sub_5_1_combinatorial_izomorphism}, this proves the upper bound for topologically connected geometric graphs. By the reduction in Subsection~\ref{sub_5_2_topologically_connected}, the upper bound holds also for general geometric graphs. 


\section{Concluding remarks and open problems} \label{section_last}

The problem of counting the asymptotic number of ``nonequivalent'' simple drawings of a graph in the plane has been answered only partially. Many open questions remain.

The gap between the lower and upper bounds on $T_{\mathrm{w}}(G)$ proved in Theorem~\ref{veta_hlavni} is wide open, especially for graphs with low density. For graphs with $cn^2$ edges, the lower and upper bounds on $\log T_{\mathrm{w}}(G)$ differ by a logarithmic factor. We conjecture that the correct answer is closer to the lower bound.

We do not even know whether $T_{\mathrm{w}}(G)$ is a monotone function with respect to the subgraph relation, since there are simple topological graphs that cannot be extended to simple complete topological graphs. See Figure~\ref{obr_8}, left, for an example.
Due to somewhat ``rigid'' properties of simple complete topological graphs, we have a much better upper bound for the complete graph than, say, for the complete bipartite graph on the same number of vertices.

\begin{problem}
Does the complete graph $K_n$ maximize the value $T_w(G)$ among the graphs $G$ with $n$ vertices? More generally, is it true that $T_w(H) \le T_w(G)$ if $H\subseteq G$?
\end{problem}

Our methods for proving upper bounds on the number of weak isomorphism classes of simple topological graphs do not generalize to the case of topological graphs with two crossings per pair of edges allowed. 

\begin{problem}
What is the number of weak isomorphism classes of drawings of a graph $G$ where every two independent edges are allowed to cross at most twice and every two adjacent edges at most once?
\end{problem}

For the complete graph with $n$ vertices, Pach and T\'oth~\cite{PT04_how} proved the lower bound $2^{\Omega(n^2\log n)}$ and the upper bound $2^{o(n^4)}$. 

A nontrivial lower bound can be proved also in the case when $G$ is a matching.
Ackerman et al.~\cite{APZ_touching_curves} constructed a system of $n$ $x$-monotone curves where every pair of curves intersect in at most one point where they either cross or touch, with $\Omega(n^{4/3})$ pairs of touching curves. Eyal Ackerman (personal communication) noted that this also follows from an earlier result by Pach and Sharir~\cite{PS91_vertical_visibility}, who constructed an arrangement of $n$ segments with $\Omega(n^{4/3})$ vertically visible pairs of disjoint segments. By changing the drawing in the neighborhood of every touching point, we obtain $2^{\Omega(n^{4/3})}$ different intersection graphs of $2$-intersecting curves, also called {\em string graphs of rank $2$}~\cite{PT04_how}. This improves the trivial lower bound observed by Pach and T\'oth~\cite{PT04_how}.

In section~\ref{section_uplne}, we proved that certain patterns are forbidden in the rotation systems of simple complete topological graphs, or more generally, in good abstract rotation systems. The problem of counting topological graphs was thus reduced to a combinatorial problem of counting permutations with forbidden patterns, by the recursion in Subsection~\ref{sub_dukaz_vety_uplne}. A general problem of this type can be formulated as follows. Given a constant $N$ and a collection $\mathcal{F}=\{F_1,F_2,\dots,F_m\}$ of sets of $N$-element permutation patterns, we say that a set $\mathcal{P}$ of permutations on $n$ elements is {\em $\mathcal{F}$-restricted\/} if for each $N$-tuple $X=(x_1,x_2,\dots,x_N)$ of positions, there is an $i \in \{1,2,\dots,m\}$ such that for every permutation $\pi\in \mathcal{P}$, all permutations from $F_i$ are forbidden as restrictions of $\pi$ at $X$. What is the maximum size of an $\mathcal{F}$-restricted set $\mathcal{P}$ of permutations on $n$ elements?

For example, in the special case of the Stanley-Wilf conjecture, the collection $\mathcal{F}$ consists of a single one-element set. A set of permutations with VC-dimension at most $k$ is an $\mathcal{F}$-restricted set where the collection $\mathcal{F}$ consists of $(k+1)!$ one-element sets, each containing a different permutation of $\{1,2,\dots,{k+1}\}$. 

In Subsection~\ref{sub_dukaz_vety_uplne}, we reduced the upper bound in Theorem~\ref{veta_uplne} to the upper bound on the size of an $\mathcal{F}$-restricted set where $\mathcal{F}$ consists of the following $2{N \choose 5}+2{N \choose 6}$ sets. For every set $A\subset\{1,2,\dots,N\}$ of five positions, the collection $\mathcal{F}$ contains a set $F_A$ of all permutations of $N$ elements whose restriction to $A$ is $(1,4,2,5,3)$ or some of its four cyclic shifts, and a set $F'_A$ of all permutations of $N$ elements whose restriction to $A$ is $(1,3,5,2,4)$ or some of its four cyclic shifts. Similarly, for every set $B\subset\{1,2,\dots,N\}$ of six positions, the collection $\mathcal{F}$ contains a set $F_B$ of all permutations of $N$ elements whose restriction to $B$ is $(1,2,3,6,5,4)$ or some of its five cyclic shifts, and a set $F'_B$ of all permutations of $N$ elements whose restriction to $B$ is $(1,4,5,6,3,2)$ or some of its five cyclic shifts. This follows from Lemma~\ref{lemma_C5}, \ref{lemma_T6} and from the proof of Theorem~\ref{theorem_Cm1_or_Tm2}, where the canonical linear ordering of the vertices of the unavoidable convex or twisted graphs is consistent with the linear ordering of the vertices of the given simple complete topological graph. 
Such $\mathcal{F}$-restricted sets of permutations are a special case of sets with VC-dimension smaller than $N$, which can have superexponential size~\cite{CK12_zobecnena_SW}, and generalize the sets with a single forbidden permutation pattern, for which a single exponential upper bound on their size is known~\cite{K00_FH_SW,MT04_SW}. Therefore one might ask for which collections $\mathcal{F}$ it is true that $\mathcal{F}$-restricted sets of permutations have only exponential size.

A positive answer to the following problem would improve the upper bound in Theorem~\ref{veta_uplne} to $T_{\mathrm{w}}(K_n)\le 2^{O(n^2)}$, which would be asymptotically optimal.

\begin{problem}\label{problem_permutacni}
Let $N>6$ be a constant positive integer. Let $\mathcal{P}$ be a set of permutations of $n$ elements such that for every $N$-tuple $X$ of positions, there is either a $5$-tuple $A\subset X$ such that the pattern $(1,3,5,2,4)$ and all its cyclic shifts are forbidden as restrictions at $A$, or a $5$-tuple $A'\subset X$ such that the pattern $(1,4,2,5,3)$ and all its cyclic shifts are forbidden as restrictions at $A'$, or a $6$-tuple $B\subset X$ such that $(1,2,3,6,5,4)$ and all its cyclic shifts are forbidden as restrictions at $B$, or a $6$-tuple $B'\subset X$ such that $(1,4,5,6,3,2)$ and all its cyclic shifts are forbidden as restrictions at $B'$. Is it true that $|\mathcal{P}|\le 2^{O(n)}$?
\end{problem}

For $N=4$ and $\mathcal{F}=\{\{(1,2,3,4)\},\{(3,4,1,2)\}\}$, a construction in~\cite{CK12_zobecnena_SW} shows an $\mathcal{F}$-restricted set of permutations of superexponential size. Such a construction does not necessarily satisfy the conditions in Problem~\ref{problem_permutacni} since, for example, the pattern $(3,4,1,2)$ is a restriction of just one cyclic shift of $(1,2,3,6,5,4)$, one cyclic shift of $(1,4,5,6,3,2)$ and of no cyclic shift of either $(1,3,5,2,4)$ or $(1,4,2,5,3)$. 
On the other hand, this construction does give a superexponential lower bound on the size of sets of permutations satisfying the restrictions implied by Lemma~\ref{lemma_abstract_C7} and Lemma~\ref{lemma_abstract_T816}, which appear in the proof of the combinatorial Theorem~\ref{veta_kombinatoricka}. This follows from the fact that every cyclic shift of the inverse of $(1,3,\dots,815,2,4,\dots,814)$ contains both patterns $(1,2,3,4)$ and $(3,4,1,2)$.
Therefore, a positive solution to Problem~\ref{problem_good} will require a different approach.

\section*{Acknowledgements}
The author thanks Josef Cibulka for discussions about enumerating various combinatorial objects.


\begin{thebibliography}{99}  


\bibitem{AFNW09_sum_of_squares}
B. M. \'Abrego, S. Fern\'andez-Merchant, M. G. Neubauer and W. Watkins, 
Sum of squares of degrees in a graph,
{\em JIPAM. Journal of Inequalities in Pure and Applied Mathematics\/} {\bf 10}(3) (2009), Article 64, 34 pp.

\bibitem{APZ_touching_curves}
E. Ackerman, R. Pinchasi and S. Zerbib, 
On touching curves,
{\em Bernoulli Reunion Conference on Discrete and Computational Geometry\/}, EPFL, Lausanne, 2012.

\bibitem{AK78_quasi}
R. Ahlswede and G.O.H. Katona, 
Graphs with maximal number of adjacent pairs of edges, 
{\em Acta Mathematica Academiae Scientiarum Hungaricae\/} {\bf 32}(1-2) (1978), 97--120.

\bibitem{BG09_localized}
F. Bazzaro and C. Gavoille,
Localized and compact data-structure for comparability graphs,
{\em Discrete Mathematics\/} {\bf 309} (2009), 3465--3484.

\bibitem{BR86_survey_asymptotic}
E. A. Bender and L. B. Richmond, 
A survey of the asymptotic behaviour of maps,
{\em Journal of Combinatorial Theory, Series B\/} {\bf 40}(3) (1986), 297--329.

\bibitem{BW85_loopless_planar}
E. A. Bender and N. C. Wormald,
The number of loopless planar maps,
{\em Discrete Mathematics\/} {\bf 54}(2) (1985), 235--237.

\bibitem{C09_on_constants}
J. Cibulka, On constants in the F\"{u}redi--Hajnal and the Stanley--Wilf conjecture,
{\em Journal of Combinatorial Theory, Series A\/} {\bf 116}(2) (2009), 290--302.

\bibitem{CK12_zobecnena_SW}
J. Cibulka and J. Kyn\v{c}l,
Tight bounds on the maximum size of a set of permutations with bounded VC-dimension,
{\em Journal of Combinatorial Theory, Series A\/} {\bf 119}(7) (2012), 1461--1478.

\bibitem{DM11_universal_exponents}
M. Drmota and M. Noy,
Universal exponents and tail estimates in the enumeration of planar maps,
{\em Electronic Notes in Discrete Mathematics\/} {\bf 38} (2011), 309--317.

\bibitem{F97_number}   
S. Felsner,
On the number of arrangements of pseudolines,
{\em ACM Symposium on Computational Geometry (Philadelphia, PA, 1996)},
{\em Discrete and Computational Geometry\/} {\bf 18} (1997), 257--267.

\bibitem{FV11_pseudolines}
S. Felsner and P. Valtr,
Coding and Counting Arrangements of Pseudolines,
{\em Discrete and Computational Geometry\/} {\bf 46}(3) (2011), 405--416. 

\bibitem{G05_complete}
E. Gioan,
Complete graph drawings up to triangle mutations,
{\em Graph-theoretic concepts in computer science,
Lecture Notes in Computer Science\/} {\bf 3787}, 139--150, Springer, Berlin, 2005.

\bibitem{HM74_edges}
H. Harborth and I. Mengersen,
Edges without crossings in drawings of complete graphs,
{\em Journal of Combinatorial Theory, Series B\/} {\bf 17} (1974), 299--311.

\bibitem{HM92_drawings}
H. Harborth and I. Mengersen,
Drawings of the complete graph with maximum number of crossings,
{\em Proceedings of the Twenty-third Southeastern International Conference on Combinatorics, Graph Theory, and Computing
(Boca Raton, FL, 1992)},
{\em Congressus Numerantium\/} {\bf 88} (1992), 225--228.

\bibitem{HMS95_ramsey_Kn}
H. Harborth, I. Mengersen and R. Schelp,
The drawing Ramsey number ${\rm Dr}(K_n)$,
{\em Australasian Journal of Combinatorics\/} {\bf 11} (1995), 151--156.


\bibitem{K00_chord_diagrams_vsechny}
A. Khruzin, 
Enumeration of chord diagrams, 
\href{http://arxiv.org/abs/math/0008209v1}{arXiv:math/0008209v1}, 2000.

\bibitem{K00_FH_SW}
M. Klazar, 
The F\"uredi--Hajnal Conjecture Implies the Stanley--Wilf Conjecture, 
{\em Formal Power Series and Algebraic Combinatorics, Moscow 2000\/}, 250--255, Springer, 2000.

\bibitem{KO07_several_cuts}
D. K\"uhn and D. Osthus,
Maximizing several cuts simultaneously,
{\em Combinatorics, Probability and Computing\/} {\bf 16} (2007), 277--283.

\bibitem{K06_crossings}
J. Kyn\v cl,
Crossings in topological graphs,
master thesis, Charles University, Prague, 2006.

\bibitem{K09_enumeration}
J. Kyn\v cl,
Enumeration of simple complete topological graphs, 
{\em European Journal of Combinatorics\/} {\bf 30}(7) (2009), 1676--1685.

\bibitem{K10_simple_real}
J. Kyn\v cl,
Simple realizability of complete abstract topological graphs in P,
{\em Discrete and Computational Geometry\/} {\bf 45}(3) (2011), 383--399.

\bibitem{K13_combinatorial}
J. Kyn\v cl,
Combinatorial problems in geometry,
doctoral thesis, Charles University, Prague, 2013.

\bibitem{MT04_SW}
A. Marcus and G. Tardos, 
Excluded permutation matrices and the Stanley--Wilf conjecture, 
{\em Journal of Combinatorial Theory, Series A\/} {\bf 107}(1) (2004), 153--160.

\bibitem{M02_lectures}
J. Matou\v{s}ek,
{\em Lectures on discrete geometry},
{\em Graduate Texts in Mathematics\/} {\bf 212}, 
Springer, New York, 2002, ISBN 0-387-95373-6.

\bibitem{MS68_c_nets_quadrangulations}
R.C. Mullin and P.J. Schellenberg, 
The enumeration of c-nets via quadrangulations,
{\em Journal of Combinatorial Theory\/} {\bf 4}(3) (1968), 259--276.

\bibitem{N07_sharp_asymptotics}
V. Nikiforov,
The sum of the squares of degrees: Sharp asymptotics,
{\em Discrete Mathematics\/} {\bf 307}(24) (2007), 3187--3193.

\bibitem{N10_improved}
G. Nivasch, Improved bounds and new techniques for Davenport--Schinzel sequences and their generalizations, 
{\em Journal of the ACM\/} {\bf 57}(3) (2010), 1--44.

\bibitem{PS91_vertical_visibility}
J. Pach and M. Sharir,
On vertical visibility in arrangements of segments and the queue size in the Bentley-Ottmann line sweeping algorithm,
{\em SIAM Journal on Computing\/} {\bf 20}(3) (1991), 460--470. 

\bibitem{PST03_unavoidable}
J. Pach, J. Solymosi and G. T\'oth,
Unavoidable configurations in complete topological graphs,
{\em Discrete and Computational Geometry\/} {\bf 30} (2003), 311--320.

\bibitem{PT04_how}
J. Pach and G. T\'oth,
How many ways can one draw a graph?,
{\em Combinatorica\/} {\bf 26}(5) (2006), 559--576.

\bibitem{R00_VC}
R. Raz, 
VC-Dimension of Sets of Permutations, 
{\em Combinatorica\/} {\bf 20}(2) (2000), 241--255.

\bibitem{R79_chord_intersection}
R. C. Read,
The chord intersection problem,
{\em Annals of the New York Academy of Sciences\/} {\bf 319}(1) (1979), 444--454.

\bibitem{RL01_4regular}
H. Ren and Y. Liu,
Enumerating near-4-regular maps on the sphere and the torus,
{\em Discrete Applied Mathematics\/} {\bf 110}(2--3) (2001), 273--288.

\bibitem{RLL02_4regular_asymptoticky}
H. Ren, Y. Liu and Z. Li, 
Enumeration of 2-connected Loopless 4-regular maps on the plane,
{\em European Journal of Combinatorics\/} {\bf 23}(1) (2002), 93--111.

\bibitem{R75_distribution}
J. Riordan,
The distribution of crossings of chords joining pairs of $2n$ points on a circle,
{\em Mathematics of Computation\/} {\bf 29}(129) (1975), 215--222.

\bibitem{SSS02_normal}
M. Schaefer, E. Sedgwick and D. \v Stefankovi\v c,
Algorithms for normal curves and surfaces,
{\em Computing and combinatorics, Lecture Notes in Computer Science\/} {\bf 2387}, 370--380, Springer, Berlin, 2002.

\bibitem{SSS03_recognizing}
M. Schaefer, E. Sedgwick and D. \v Stefankovi\v c,
Recognizing string graphs in NP,
{\em Journal of Computer and System Sciences\/} {\bf 67} (2003), 365--380.

\bibitem{T50_contribution}
J. Touchard,
Contribution a l'\'{e}tude du probleme des timbres poste,
{\em Canadian Journal of Mathematics\/} {\bf 2} (1950), 385--398.

\bibitem{T63_census}
W. T. Tutte,
A census of planar maps,
{\em Canadian Journal of Mathematics\/} {\bf 15} (1963), 249--271.

\bibitem{WL75_loopless}
T. R. S. Walsh and A. B. Lehman,
Counting rooted maps by genus III: Nonseparable maps,
{\em Journal of Combinatorial Theory, Series B\/} {\bf 18} (1975), 222--259.

\bibitem{W68_nonlinear_manifolds}
H. E. Warren, 
Lower bounds for approximation by nonlinear manifolds,
{\em Transactions of the American Mathematical Society\/} {\bf 133} (1968), 167--178.


\end{thebibliography}
\end{document}